\newcommand{\classstyle}{1}
\numberwithin{equation}{section}
\newtheorem{theorem}{Theorem}[section]
\newtheorem{definition}{Definition}[section]
\newtheorem{corollary}{Corollary}[section]
\newtheorem{remark}{Remark}[section]
\newtheorem{lemma}{Lemma}[section]
\newtheorem{assumption}{Assumption}[section]
\DeclareMathOperator*{\argmax}{arg\,max}
\DeclareMathOperator*{\argmin}{arg\,min}
\newcommand{\apprxd}{\,{\buildrel d \over \approx}\,}
\setlist[enumerate]{leftmargin=.5in}
\setlist[itemize]{leftmargin=.5in}
\newcommand{\ownKeywords}{Langevin dynamics, interacting particle systems, Bayesian inference, computational optimal transport, Wasserstein distance, mean-field Fokker-Planck equation, homotopy}
\newcommand{\ownAMS}{
  65N21, 
  62F15,   	
  65N75,   	
  65C30,   	
  35Q84,   	
  65K10
}
\newcommand{\ownThanks}{We thank Christian Bayer, Paul Hagemann, Matthias Liero and Claudia Schillings for fruitful discussions on the topics of this paper.
ME acknowledges the partial support by the DFG SPP 2998 ``Theoretical Foundations of Deep Learning''. RG acknowledges the support by the DFG SPP 1886 ``Polymorphic Uncertainty Modelling for the Numerical Design of Structures''.
DS acknowledges support by the ProFIT project \textquotedblleft ReLkat – Reinforcement Learning for complex automation engineering\textquotedblright. This study does not have any conflicts to disclose.}
\title{Less Interaction with forward models in Langevin Dynamics}
\author{
	\textbf{Martin Eigel} \\
	Weierstrass Institute for \\ Applied Analysis  and Stochastics \\
	Berlin, Germany \\
	\texttt{eigel@wias-berlin.de}
	\And
	\textbf{Robert Gruhlke} \\
	Weierstrass Institute for \\ Applied Analysis  and Stochastics \\
	Berlin, Germany \\
	\texttt{gruhlke@wias-berlin.de}
	\And
	\textbf{David Sommer} \\
	Weierstrass Institute for \\ Applied Analysis  and Stochastics \\
	Berlin, Germany \\
	\texttt{sommer@wias-berlin.de} 
}
\begin{document}
\maketitle

\begin{abstract}
Ensemble methods have become ubiquitous for the solution of Bayesian inference problems. State-of-the-art Langevin samplers such as the Ensemble Kalman Sampler (EKS), Affine Invariant Langevin Dynamics (ALDI) or its extension using weighted covariance estimates rely on successive evaluations of the forward model or its gradient. 
A main drawback of these methods hence is their vast number of required forward calls as well as their possible lack of convergence in the case of more involved posterior measures such as multimodal distributions. 
The goal of this paper is to address these challenges to some extend. First, several possible adaptive ensemble enrichment strategies that successively enlarge the number of particles in the underlying Langevin dynamics are discusses that in turn lead to a significant reduction of the total number of forward calls. Second, analytical consistency guarantees of the ensemble enrichment method are provided for linear forward models. Third, to address more involved target distributions, the method is extended by applying adapted Langevin dynamics based on a homotopy formalism for which convergence is proved. 
Finally, numerical investigations of several benchmark problems illustrates the possible gain of the proposed method, comparing it to state-of-the-art Langevin samplers.
\end{abstract}

\ifnum\classstyle=1
   \keywords{\ownKeywords}
\fi

\ifnum\classstyle=2
\begin{keywords}
 \ownKeywords
\end{keywords}
\begin{MSCcodes}
\ownAMS
\end{MSCcodes}
\fi


\section{Introduction}

Consider the inverse problem of finding an unknown $y\in\mathbb{R}^{D}$ from an observation $\Delta\in\mathbb{R}^K$ for $D,K\in\mathbb{N}$, where
\begin{equation}
    \Delta = \mathcal{G}(y) + \eta,
    \label{eq:inverseproblem}
\end{equation}
with a deterministic forward operator $\mathcal{G}\colon\mathbb{R}^{D}\to\mathbb{R}^K$ and centered Gaussian observational noise $\eta\sim\mathcal{N}(0,\Gamma)$ with positive definite covariance matrix $\Gamma\in\mathbb{R}^{K,K}$.
In the Bayesian framework, a prior distribution $\mu_{\mathrm{prior}}$ is associated with the unknown $y$.
Then, for a given measurement $\Delta = \tilde{\Delta}$, the prior is updated via Bayes' rule to yield a posterior distribution $\mu_{\ast}$~\cite{kaipio2006statistical}.  
In case that $\mu_{\mathrm{prior}}$ has a Lebesgue density $\pi_{\mathrm{prior}}$, then under mild assumptions~\cite{stuart2010inverse} there exists a posterior density $\pi_\ast$, which is the Lebesgue density of $\mu_\ast$ given by
\begin{equation}\label{eq:posterior1}
    \pi_\ast(y) \propto \exp(-L(y))\pi_{\mathrm{prior}}(y),
\end{equation}
with the log-likelihood potential $L(y) := 1/2|\tilde{\Delta}-\mathcal{G}(y)|_\Gamma^2$. Here, $|\,.\,|$ denotes the standard Euclidean norm and $|\,.\,|_{M} = | M^{-1/2}\,.\, |$  for any symmetric positive definite $M$. If the prior density is positive on $\mathbb{R}^{D}$, we define the potential 
\begin{equation}\label{eq:potential}
    \Phi(y) := L(y) - \log \pi_{\mathrm{prior}}(y),
\end{equation}
such that the posterior density becomes
\begin{equation}\label{eq:posterior}
    \pi_\ast(y)\propto \exp(-\Phi(y)).
\end{equation}


In this work we are concerned with the problem of sampling from the posterior distribution given by~\eqref{eq:posterior} using interacting particle methods based on Langevin dynamics.
While non-interacting particle systems exhibit slow convergence in time, their extensions to interacting particle system such as ALDI and the EKS have demonstrated a superior convergence speed.
However, such state-of-the-art methods still require a vast number of forward calls to solve the underlying model.
Furthermore, convergence to more involved posteriors such as multimodal distributions may become arbitrarily slow or cannot be guaranteed at all.
To address these drawbacks, we propose to extend existing methods by the following two strategies.

The first concept is \textit{ensemble enrichment}. It allows to work with ensembles of small batch sizes for large parts of the process, utilizing the contained information to build larger ensembles with the desired distribution at a later time.
This substantially reduces the number of necessary calls of the forward model $\mathcal{G}(\cdot)$.

The second concept is based on the notion of \textit{homotopy}.
Here, instead of directly working with a particle system based on the posterior, we utilize intermediate measures obtained from interpolation between a simple auxiliary measure and the posterior.
Such a preconditioning of the particle ensemble potentially increases the convergence speed over time significantly, especially for multimodal distributions.
%
We coin this new methodology LIDL\footnote{An acronym meaning \textit{\textbf{L}ess \textbf{I}nteraction with forward models in \textbf{L}angevin \textbf{D}ynamics}.
It should be noted that the first author has objections against this term but was overruled by majority vote. 
}.



\subsection{Related work}
There is a vast amount of literature on different sampling methods such as Markov Chain Monte Carlo methods (MCMC)~\cite{roberts2004general,brooks2011handbook,robert2011mcmc} and more recently methods based on Langevin dynamics~\cite{roberts1996exponential} or Stein variational gradient descent~\cite{liu2016stein}, to name just a few.
While the idea of ensemble enrichment can in principle be deployed for any ensemble sampler, in this work we focus on the class of Langevin based samplers.
This is due to the fact that Langevin samplers lend themselves quite naturally to ensemble enrichment schemes like \textit{slicing} (adding together ensembles defined by the Langevin process at different time points) and appropriately scaled \textit{random kicks} (adding noisy duplicates of the existing batch, such that the covariance structure is preserved).

In analytical chemistry, the term \textit{sample enrichment} is common to denote certain ways of manipulating samples to give them desirable properties \cite{stalter2016enrichment,sample_enrich2019}.
In order to differentiate from this term and to emphasize the connection to particle ensemble methods, we coin our form of enrichment \textit{ensemble enrichment}.
In this work, we understand ensemble enrichment as the addition of new samples $\{y^i\}_{i=n+1}^N$ to an existing sample batch $\{y^i\}_{i=1}^n$.
In a very general setting, an enrichment can be considered admissible if both sample batches are drawn independently from the same distribution.
Depending on the context however, weaker conditions may be sufficient.
In the setting of Langevin sampling considered here, an enrichment is suitable if the distance from the posterior to the enriched sample batch  can be bounded 
by the respective distance of the batch before enrichment. 

The use of Langevin dynamics in Bayesian inference requires the posterior to be an invariant measure of the chosen dynamics.
Historically, the most basic dynamics with this property is given by the first order overdamped Langevin equation \cite{Pavliotis:2008aa}.
As a state-of-the-art Langevin method, we use the Affine Invariant Langevin Dynamics (ALDI) \cite{garbuno2020affine} in this work.
ALDI is a modification of the Ensemble Kalman Sampler (EKS) \cite{reich2021fokker}, which ensures affine invariance \cite{goodman2010ensemble} as well as convergence in total variation to the posterior, even with a finite number of particles.
Ensemble Langevin methods such as EKS and ALDI have strong links to ensemble Kalman filters (EKF) \cite{evensen2006kalman, law2015data, reich2015probabilistic} and ensemble Kalman inversion (EKI) \cite{iglesias2013ensemble}, which was pointed out in \cite{reich2021fokker}.
A recent survey of ensemble Kalman methods and their application to Bayesian inverse problems can be found in \cite{reich2022kalman}.
The analysis of EKF and EKI for Bayesian inversion has been extensively studied \cite{schillings2017analysis,schillings2018convergence, blomker2019well,ding2021ensemble}.
In \cite{DingLi21} convergence of the EKS to the limiting Fokker-Planck equation in expected 2-Wasserstein distance is shown.
To the best of our knowledge, a comparable result for ALDI has not been shown yet.

\subsection{Contribution}

\begin{itemize}
\item We propose a modification of existing Langevin samplers, coined as LIDL, utilizing successive ensemble enrichment to substantially reduce the number of required forward calls in the Langevin dynamics. The effect of ensemble enrichment is explained in Section \ref{sec:setting}, ensemble enrichment strategies are introduced in Section \ref{sec:enrichment} and we present the analysis of the method in Section \ref{sec:theory}.
For the family of ensemble distributions $(\hat{\mu}_t)_{t\geq 0}$ generated by the Langevin process, we show analytical consistency of the scheme in the following sense: under suitable consistency assumptions on the initial density $\pi_0$ (which need not coincide with the prior), the underlying potential $\Phi$ and the ensemble enrichment scheme, we find for $\delta >0$ a configuration of the enrichment and time $T_{\delta} > 0$ such that the expected 2-Wasserstein distance of the solution to the posterior is smaller than $\delta$, i.e.,
\begin{equation*}
    \mathbb{E}[\mathcal{W}_2(\hat{\mu}_{T_{\delta}},\mu_{\ast})] < \delta.
\end{equation*}
This result is a combination of the corresponding consistency result for ALDI, shown in Theorem \ref{thm:B-T-convergence_ALDI}, and the definition of consistent enrichment schemes given in Definition \ref{def:consistent_enrichment}, yielding the main result in Corollary \ref{cor:B-T-convergence_consistent}.
\item We introduce the concept of \textit{ensemble enrichment}, and provide a rigorous mathematical description of ensemble enrichment schemes $\mathcal{E}$ as stochastic mappings operating on families of empirical measures $(\hat{\mu}_t)_{t\geq 0}$ in Section \ref{sec:enrichment}. For carrying out the enrichment in Langevin processes, we propose several informed choices of enrichments, namely \textit{slicing} (adding up time slices $\hat{\mu}_{t \pm \Delta t}$ of the process at different times, Section \ref{sec:slicing}), \textit{diffusion propagation} (propagating the current ensemble using only the computationally cheap diffusion part of the Langevin process, Section \ref{sec:diffusion_step}) and \textit{generalized transport} (sampling i.i.d. from a learned random variable $Y \sim \hat{\mu}_t$ fitted to the current ensemble, Section \ref{sec:generalized_transport}).


\item In order to accelerate the convergence of Langevin dynamics in the case of multimodal invariant measures, we introduce the terminology of \textit{$2$-Wasserstein stable homotopy maps} $\mathcal{H}$ in Section \ref{sec:homotopy}. 
Such a function maps between an auxiliary initial potential $\mathcal{H}(0)=\Psi$ and the posterior potential $\mathcal{H}(1)=\Phi$. This framework enables to construct a time inhomogeneous drift term $b_{\mathcal{H}}(t,Y_t)=b(\mathcal{H}(s(t),Y_t)$ in the underlying Langevin dynamics that reads
\begin{equation*}
\mathrm{d}y_t^{(i)} = b_{\mathcal{H}}(t, Y_t)\mathrm{d}t + \Gamma(Y_t)\mathrm{d}W_t^{(i)}.
\end{equation*}
Based on Assumption \ref{ass:localconvergence} of local convergence in expected Wasserstein-$2$ distance of the associated particle system, we provide a convergence analysis in the case of piecewise constant inhomogeneity in Theorem \ref{thm:conv_homotopy}.
We show by numerical evidence that different homotopy designs affect the overall convergence speed over time, especially for multimodal distributions.
The suggested homotopy approach can be conceived as a preconditioning tool.

\item 
The final contribution concerns the numerical investigation of our method.
Based on the consistency results of Corollary \ref{cor:B-T-convergence_consistent} and Theorem \ref{thm:conv_homotopy}, we propose in Section \ref{sec:numerics} a principled way of tracking convergence of numerical realizations of the method in debiased Sinkhorn divergence \eqref{eq:sinkhorn_div}.
More precisely, we consider the convergence in distribution
\begin{equation*}
\mathcal{S}_{\epsilon}(\hat{\mu}_t,\mu_{\ast}^{(\overline{b})}) \stackrel{d}{\longrightarrow} \mathcal{S}_{\epsilon}(\tilde{\mu}_{\ast}^{(\overline{b})},\mu_{\ast}^{(\overline{b})}), \qquad t\to \infty, 
\end{equation*}
where $\hat{\mu}_t$ is the ensemble distribution generated by the Langevin process (producing a total of $\overline{b}$ approximate posterior samples), $\mu^{(\overline{b})}_{\ast}$ and $\tilde{\mu}^{(\overline{b})}_{\ast}$ are empirical measures of $\overline{b}\in\mathbb{N}$ particles drawn i.i.d. from the posterior. 
Moreover, initial ideas for adaptive ensemble enrichment schemes and homotopy designs are presented in Sections \ref{sec:num_adaptive} and \ref{sec:num_multi_homotopy}.
We investigate the performance of the proposed method on several numerical test problems in Sections \ref{sec:num:uni_multi} and \ref{sec:num_darcy}.
This includes a challenging multimodal posterior and a high-dimensional Darcy equation often used as benchmark in Uncertainty Quantification. 

\end{itemize}

\section{Methodology}
\label{sec:setting}

Suppose we are interested to obtain a batch of $\overline{b}\in\mathbb{N}$ samples drawn from the posterior distribution $\mu_\ast$. 
Suppose further that we have a method which, relying on the evaluation of the potential $\Phi$ or its gradient $\nabla \Phi$, maps $\overline{b}$ particles/samples drawn from some initial measure $\mu_0$ 
to particles approximately being distributed according to $\otimes_{i=1}^{\overline{b}} \mu_\ast$ in $N_{\text{iter}}$ iterations. We will refer to this method as the \textit{particle propagator}. 
One such particle propagator is given by Langevin dynamics based particle systems \cite{roberts1996exponential,garbuno2020interacting,garbuno2020affine,reich2021fokker}. 
Here, $N_\text{iter}$ depends on the distance $\mathrm{d}(\mu_{0},\mu_\ast)$ between $\mu_{0}$ and $\mu_\ast$, where $\mathrm{d}$
is a suitable metric on the set of probability measures. Note that we do not require the initial measure $\mu_0$ to be identical to the prior $\mu_{\mathrm{prior}}$ although this is a natural choice in the setup of Bayesian inference.
We assume one iteration step of $\overline{b}$ particles to require $\overline{b}$ forward calls (as is the case e.g. for ALDI \cite{garbuno2020affine,reich2021fokker}), leading to a total workload of
\begin{equation*}
 \#\text{forward calls} = \overline{b}N_\text{iter}.
\end{equation*}
Our goal throughout this manuscript is to reduce the number of forward calls significantly.
%
%
For this we propose to construct a sequence of auxiliary measures from which an increasing number of particles are drawn over time and eventually propagated to the sought posterior distribution.
We may formally divide this approach into two stages.
The first stage deals with the design choice of the underlying particle system in terms of a modification of the potential $\Phi$ through the introduction of surrogates or homotopy maps. 
The second stage concerns the propagation of particles to the true posterior using successive ensemble enrichment.
In practical application, both stages are intertwined in that ensemble enrichment may be performed while using variations of the potential.


\vspace{1ex}
\paragraph{Stage I: Auxiliary measures}
Initially, one chooses an accessible distribution $\mu_{0}$, which not necessarily coincides with the prior distribution.
Next, some auxiliary potential $\Psi$ is introduced and instead of directly computing with $\Phi$, we use the intermediate potentials
\begin{equation}
\label{eq:annealing}
    \mathcal{H}(s):= (1-s)\Psi + s\Phi,\quad s\in[0,1],
\end{equation}
replacing the potential $\Phi$ in the particle propagator. This homotopy approach carries out a linear interpolation from the potential $\Psi=\mathcal{H}(0)$ to $\Phi=\mathcal{H}(1)$ in this example.
By using this potential, we introduce intermediate target measures with density proportional to $\exp(-\mathcal{H}(s))$.
From the perspective of the particle dynamics, the particles are moved towards the final target measure, i.e. the posterior distribution defined by $\mathcal{H}(1)=\Phi$, by passing through the intermediate measures associated with $\mathcal{H}(s)$. 
From the classical optimization point of view, this type of homotopy can be seen as a sequence of Tikhonov regularizations with parameters $(1-s)/s\in[0,\infty]$.
The effect of different designs of interpolations between $\Psi$ and $\Phi$ is discussed in the numerical examples in Section \ref{sec:num_multi_homotopy}.
It turns out that this approach is crucial to enable and accelerate the propagation of particles to multimodal distributions with our approach.
We refer to Figure~\ref{fig:scheme_anneal} for an illustration of the technique and to Section~\ref{sec:homotopy} for a more in depth discussion.

We now discuss potential choices of the start distribution $\mu_0$.
A canonical first choice would be $\mu_0=\mu_\text{prior}$.
However, this might not be ideal if the prior distribution is not a good approximation of the posterior.
A reasonable alternative is to choose $\mu_0$ as a Gaussian approximation of the posterior distribution.

As a third option, assume that a surrogate model of the forward operator $\mathcal{G}$ or of the potential $\Phi$ is available. 
We further assume that it takes $b_\text{surr}$ evaluations of the exact forward map to construct this surrogate and that the cost of evaluating the surrogate potential $\hat{\Phi}$ is negligible compared to the cost of evaluating $\Phi$.
Then, we choose $\mu_0$ with density proportional to $\exp(-\hat{\Phi})$.
Consequently, samples can be drawn with negligible workload through a particle propagator using the surrogate potential $\hat{\Phi}$.
This idea is illustrated in Figure~\ref{fig:stage_1}. 
Depending on the quality of the surrogate, such a $\mu_0$ is a potentially better proxy to $\mu_\ast$ than $\mu_{\text{prior}}$ in the sense that
\begin{equation}
    \label{eq:measureprecond}
    \mathrm{d}(\mu_0, \mu_\ast) < \mathrm{d}(\mu_{\text{prior}},\mu_\ast).
\end{equation}
We refer to evaluations of both the surrogate potential and $\mathcal{H}(0)$ as \textit{free calls}.

In applications the exact solution of the forward model $\mathcal{G}$ is often unknown and only approximations $\mathcal{G}_h\approx\mathcal{G}$ are available.
The influence of this approximation on the convergence to the true posterior has e.g. been considered in~\cite{calvetti2018iterative}.
Note that successive improvement of the approximation during particle propagation potentially leads to significant total workload reduction.
However, this idea and its interaction with the concepts presented in this paper is beyond the scope of this work and we only comment on it in Section~\ref{sec:conclusion}. 



\begin{figure}[ht]
\begin{center}
\begin{tikzpicture}

\node at (0, 0)   (A) {};
\node at (7.7,0.35) (C) {};
\node at (8.5+1.5,0.3) (D) {};

\filldraw (A) circle (2.8pt);
\filldraw (C) circle (2pt);
\filldraw (D) circle (2.pt);

\draw   (A) to[out=20,in=200] (C);
\draw[<->,dotted, opacity = 0.5]   (C) -- (D);


\node at (0,0.5) {$\mu_{\mathrm{prior}}$};
\node at (7.7,0.35+0.5) {$\mu_0$};
\node at (8.5+1.5,0.3+0.5) {$\mu_{\ast}$};

\draw[->] (1.4,-1) -- (6.4,-1);
\node at (4,-0.75) {\scriptsize \textit{propagate samples}};
\node at (4,-1.25 ) {\scriptsize \textit{using surrogate potential}};

 \foreach \x in {-0.6,-0.4,-0.2,0,0.2,0.4,0.6}
  {
  \foreach \y in {-1.3,-1.1,-0.9,-0.7,-0.5,-0.3} {
    \filldraw[black]  (\x,\y) circle (0.04);
    }

  }

\foreach \point in
{
(-0.76+7.7, -1.28 *0.8),
(0.13+7.7, -1.2 *0.8),
(0.18+7.7, -1.24 *0.8),
(-0.21+7.7, -0.28*0.8),
(0.46+7.7, -0.6*0.8),
(0.0+7.7, -1.32*0.8),
(0.1+7.7, -1.08*0.8),
(0.24+7.7, -1.54*0.8),
(0.43+7.7, -0.84*0.8),
(0.27+7.7, -1.39*0.8),
(-0.49+7.7, -0.46*0.8),
(0.41+7.7, -0.97*0.8),
(0.41+7.7, -0.7*0.8),
(0.33+7.7, -1.24*0.8),
(-0.5+7.7, -1.51*0.8),
(0.08+7.7, -0.83*0.8),
(-0.72+7.7, -1.2*0.8),
(0.14+7.7, -1.02*0.8),
(-0.19+7.7, -1.03*0.8),
(0.03+7.7, -0.73*0.8),
(0.35+7.7, -1.28*0.8),
(-0.34+7.7, -1.44),
(-0.16+7.7, -1.6),
(-0.01+7.7, -1.26),
(-0.26+7.7, -0.98),
(0.16+7.7, -0.58),
(-0.35+7.7, -0.88),
(-0.25+7.7, -1.59),
(-0.84+7.7, -1.0),
(-0.08+7.7, -1.21),
(0.07+7.7, -0.86),
(-0.4+7.7, -1.33),
(0.14+7.7, -0.77),
(-0.65+7.7, -1.06),
(0.31+7.7, -0.81*0.8),
(-0.48+7.7, -0.75),
(-0.04+7.7, -0.74),
(-0.02+7.7, -0.5),
(0.26+7.7, -0.71),
(0.11+7.7, -0.39),
(0.27+7.7, -0.87),
(-0.32+7.7, -1.06),
(0.1+7.7, -1.37),
(0.14+7.7, -0.91),
(-0.33+7.7, -0.86),
(0.01+7.7, -0.86),
(-0.21+7.7, -0.76),
(-0.25+7.7, -0.75),
(-0.17+7.7, -1.23),
(0.27+7.7, -1.6*0.8)}
{
\filldraw[black]  \point circle (0.04);
}


\end{tikzpicture}
\end{center}
\captionsetup{width=.92\linewidth}
\caption{
\textit{Stage I: Samples drawn from the the prior measure $\mu_{\mathrm{prior}}$ are propagated to $\mu_0$, the approximate posterior measure introduced through the substitution by the chosen surrogate.}
\label{fig:stage_1}
}
\end{figure}


\begin{figure}
    \centering
    \begin{tikzpicture}
    \node at (0,0) { \includegraphics[width = 0.9\linewidth]{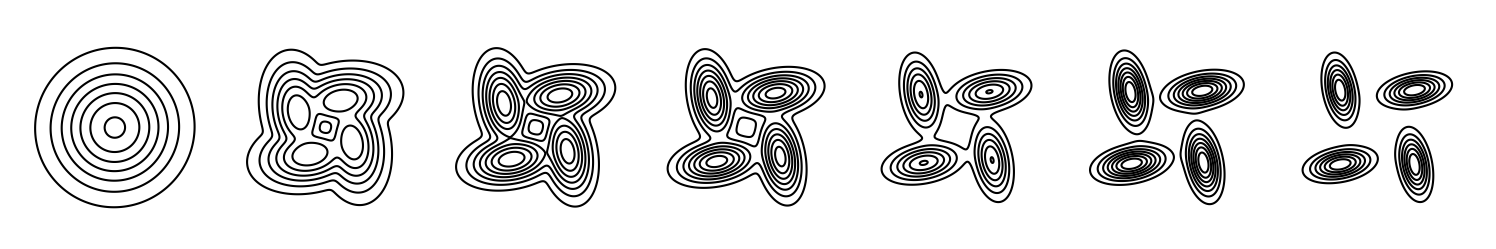}};
    
    \node at (-6.25,-1.2) {$s=0$};
    \node at ( 6.25,-1.2) {$s=1$};
    \draw[->, dashed] (-5.25, -1.2) -- (5.25,-1.2);
    \end{tikzpicture}
   
    \caption{\textit{Illustration of the homotopy approach towards a multimodal distribution with $\Psi$ choosen as the Gaussian approximation for $h=0$ on the left-hand side to the target distribution on the right-hand side for $h=1$.}}
    \label{fig:scheme_anneal}
\end{figure}

\vspace{1ex}
\paragraph{Stage II: Intermediate ensemble enrichment}
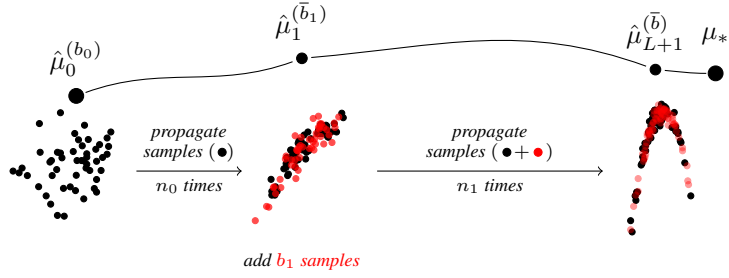
\begin{figure}[ht]
\begin{center}
\begin{tikzpicture}

\node at (0, 0)   (A) {};
\node at (3, 0.5)   (B) {};
\node at (7.7,0.35) (C) {};
\node at (8.5,0.3) (D) {};

\filldraw (A) circle (2.8pt);
\filldraw (B) circle (2pt);
\filldraw (C) circle (2pt);
\filldraw (D) circle (2.8pt);

\draw   (A) to[out=20,in=200] (B);
\draw   (B) to[out=5,in=160] (C);
\draw   (C) to[out=-5,in=180] (D);

\node at (0,0.5) {$\hat{\mu}^{(b_0)}_{0}$};
\node at (3, 0.5+0.5) {$\hat{\mu}^{(\overline{b}_1)}_{1}$};
\node at (7.7,0.35+0.5) {$\hat{\mu}^{(\overline{b})}_{L+1}$};
\node at (8.5,0.3+0.5) {$\mu_{\ast}$};

\node at (3,-2.2) {\scriptsize\textit{add {\color{red}$b_1$ samples}}};


\draw[->] (0.8,-1) -- (2.2,-1);
\node at (1.5,-0.5) {\scriptsize \textit{propagate}};
\node at (1.5,-0.75) {\scriptsize \textit{samples}
$(\phantom{o})$};

\ifnum\classstyle=0 
\filldraw  (1.94 + 0.04,-0.75) circle (1.5pt);
\fi
\ifnum\classstyle=1 
\filldraw  (1.94,-0.75) circle (1.5pt);
\fi
\ifnum\classstyle=2 
\filldraw  (1.94 + 0.13 ,-0.75) circle (1.5pt);
\fi

\node at (1.5,-1.2) {\scriptsize $n_0$~\textit{times}};

\draw[->] (4,-1) -- (7,-1);
\node at (5.5,-0.5) {\scriptsize \textit{propagate}};
\node at (5.5,-0.75) {\scriptsize \textit{samples} $(\phantom{o}+\phantom{o})$};
\ifnum\classstyle=0 
\filldraw  (5.725+0.02,-0.75) circle (1.5pt);
\filldraw[red]  (6.14 +0.07,-0.75) circle (1.5pt);
\fi
\ifnum\classstyle=1 
\filldraw  (5.725,-0.75) circle (1.5pt);
\filldraw[red]  (6.14,-0.75) circle (1.5pt);
\fi
\ifnum\classstyle=2 
\filldraw  (5.725 +0.13,-0.75) circle (1.5pt);
\filldraw[red]  (6.14 + 0.15,-0.75) circle (1.5pt);
\fi

\node at (5.5,-1.2) {\scriptsize $n_1$~\textit{times}};

\foreach \point in
{(-0.76, -1.28 *0.8),
(0.13, -1.2 *0.8),
(0.18, -1.24 *0.8),
(-0.21, -0.28*0.8),
(0.46, -0.6*0.8),
(0.0, -1.32*0.8),
(0.1, -1.08*0.8),
(0.24, -1.54*0.8),
(0.43, -0.84*0.8),
(0.27, -1.39*0.8),
(-0.49, -0.46*0.8),
(0.41, -0.97*0.8),
(0.41, -0.7*0.8),
(0.33, -1.24*0.8),
(-0.5, -1.51*0.8),
(0.08 , -0.83*0.8),
(-0.72 , -1.2*0.8),
(0.14 , -1.02*0.8),
(-0.19 , -1.03*0.8),
(0.03 , -0.73*0.8),
(0.35 , -1.28*0.8*0.8),
(-0.34 , -1.44),
(-0.16 , -1.6),
(-0.01 , -1.26),
(-0.26 , -0.98),
(0.16 , -0.58),
(-0.35 , -0.88),
(-0.25 , -1.59),
(-0.84 , -1.0),
(-0.08 , -1.21),
(0.07 , -0.86),
(-0.4 , -1.33),
(0.14 , -0.77),
(-0.65 , -1.06),
(0.31 , -0.81*0.8),
(-0.48 , -0.75),
(-0.04 , -0.74),
(-0.02 , -0.5),
(0.26 , -0.71),
(0.11 , -0.39),
(0.27 , -0.87),
(-0.32 , -1.06),
(0.1 , -1.37),
(0.14 , -0.91),
(-0.33 , -0.86),
(0.01 , -0.86),
(-0.21 , -0.76),
(-0.25 , -0.75),
(-0.17 , -1.23),
(0.27 , -1.6*0.8)}
{
    \filldraw[black]  \point circle (0.04);
}

\foreach \point in
{(2.81, -1.06),
(3.33, -0.51),
(3.25, -0.38),
(3.39, -0.48),
(2.95, -0.85),
(2.64, -1.32),
(2.92, -0.76),
(2.91, -0.95),
(2.79, -0.9),
(2.88, -0.76),
(3.08, -0.48),
(3.05, -0.84),
(2.96, -0.6),
(2.9, -0.8),
(3.27, -0.5),
(3.25, -0.5),
(3.34, -0.54),
(2.67, -1.27),
(2.86, -0.81),
(2.6, -1.27),
(3.46, -0.48),
(2.95, -0.63),
(3.03, -0.52),
(3.32, -0.39),
(2.8, -0.82),
(3.55, -0.29),
(2.65, -1.21),
(3.01, -0.72),
(2.82, -1.04),
(2.81, -0.84),
(3.04, -0.65),
(3.21, -0.5),
(3.33, -0.43),
(3.28, -0.43),
(3.43, -0.37),
(2.79, -0.95),
(3.33, -0.4),
(3.06, -0.75),
(2.6, -1.32),
(2.92, -0.86),
(2.86, -0.72),
(2.59, -1.11),
(2.76, -0.97),
(3.53, -0.23),
(2.85, -0.72),
(2.83, -0.85),
(3.12, -0.57),
(3.22, -0.46),
(3.09, -0.73),
(3.35, -0.55),
(2.99, -0.6)}
{
    \filldraw[black]  \point circle (0.04);
}

\foreach \point in
{
(2.86, -0.85),
(3.24, -0.37),
(2.63, -1.21),
(3.01, -0.68),
(3.18, -0.64),
(3.4, -0.62),
(3.19, -0.5),
(3.29, -0.42),
(3.15, -0.61),
(2.91, -0.53),
(2.92, -1.1),
(3.1, -0.58),
(3.23, -0.44),
(2.91, -0.85),
(3.49, -0.28),
(3.43, -0.36),
(3.33, -0.45),
(3.29, -0.64),
(2.53, -1.47),
(2.78, -0.84),
(2.93, -0.72),
(2.72, -1.22),
(3.13, -0.69),
(3.16, -0.72),
(3.26, -0.37),
(3.25, -0.54),
(2.97, -0.82),
(2.91, -0.94),
(2.65, -1.3),
(3.44, -0.42),
(2.86, -0.92),
(2.38, -1.66),
(2.95, -0.73),
(2.99, -0.55),
(2.95, -0.99),
(3.22, -0.42),
(3.1, -0.7),
(3.17, -0.57),
(2.56, -1.48),
(3.36, -0.52),
(2.79, -0.8),
(2.96, -0.68),
(2.71, -1.11),
(3.22, -0.27),
(2.94, -0.73),
(2.62, -1.22),
(3.24, -0.41),
(3.15, -0.3),
(2.98, -0.66)}
{
    \filldraw[red, opacity = 0.7]  \point circle (0.04);
}

\foreach \point in
{(7.75, -0.18),
(7.43, -1.33),
(7.76, -0.24),
(7.66, -0.3),
(7.86, -0.24),
(7.67, -0.36),
(7.46, -1.28),
(7.72, -0.3),
(7.75, -0.27),
(8.04, -0.62),
(7.79, -0.2),
(7.78, -0.11),
(7.96, -0.42),
(8.0, -0.48),
(7.7, -0.28),
(7.8, -0.29),
(7.79, -0.19),
(7.8, -0.19),
(7.56, -0.63),
(7.73, -0.24),
(7.72, -0.36),
(7.69, -0.19),
(7.45, -1.28),
(8.03, -0.6),
(7.75, -0.2),
(7.69, -0.46),
(7.76, -0.4),
(7.77, -0.24),
(7.57, -0.64),
(7.54, -0.85),
(7.84, -0.16),
(7.66, -0.34),
(7.37, -1.66),
(7.68, -0.33),
(7.52, -0.9),
(7.61, -0.5),
(7.7, -0.33),
(7.97, -0.47),
(7.83, -0.19),
(8.17, -1.33),
(7.81, -0.22),
(7.84, -0.18),
(7.7, -0.35),
(7.63, -0.46),
(8.09, -0.89),
(7.57, -0.73),
(7.53, -0.7),
(7.77, -0.25),
(7.76, -0.24),
(7.82, -0.14),
(7.69, -0.19),
(7.6, -0.61),
(7.53, -0.81),
(7.95, -0.22),
(7.74, -0.28),
(7.61, -0.48),
(7.62, -0.34),
(7.66, -0.23),
(7.78, -0.14),
(8.0, -0.5),
(7.79, -0.27),
(7.78, -0.33),
(7.69, -0.36),
(7.73, -0.28),
(7.64, -0.4),
(7.39, -1.81),
(7.82, -0.23),
(7.51, -0.92),
(7.78, -0.27),
(7.64, -0.56),
(7.44, -1.39),
(7.64, -0.34),
(7.97, -0.41),
(7.92, -0.42),
(7.57, -0.66),
(7.63, -0.55)}
{
    \filldraw[black]  \point circle (0.04);
}

\foreach \point in
{(7.62, -0.46),
(7.77, -0.36),
(7.58, -0.62),
(7.97, -0.39),
(7.44, -1.32),
(7.47, -1.27),
(7.78, -0.23),
(7.67, -0.36),
(7.96, -0.4),
(7.87, -0.3),
(7.78, -0.21),
(7.76, -0.17),
(7.81, -0.19),
(7.74, -0.12),
(7.59, -0.5),
(8.13, -1.13),
(7.77, -0.22),
(7.78, -0.3),
(8.01, -0.55),
(7.47, -1.1),
(7.56, -0.73),
(7.87, -0.39),
(7.68, -0.2),
(7.89, -0.24),
(7.61, -0.51),
(7.66, -0.44),
(7.85, -0.19),
(7.71, -0.31),
(7.67, -0.46),
(7.55, -0.84),
(7.72, -0.24),
(8.06, -0.69),
(7.65, -0.39),
(7.73, -0.2),
(7.57, -0.67),
(7.7, -0.3),
(7.68, -0.25),
(7.59, -0.53),
(7.94, -0.38),
(7.75, -0.2),
(7.62, -0.48),
(7.82, -0.17),
(7.62, -0.56),
(7.57, -0.78),
(7.81, -0.18),
(7.79, -0.25),
(7.54, -0.87),
(7.76, -0.28),
(7.83, -0.28),
(7.72, -0.19),
(7.93, -0.26),
(7.85, -0.21),
(7.86, -0.21),
(7.64, -0.51),
(7.93, -0.35),
(7.78, -0.16),
(7.95, -0.37),
(8.12, -0.93),
(7.77, -0.31),
(7.8, -0.24),
(7.61, -0.46),
(7.93, -0.31),
(7.56, -0.82),
(7.37, -1.78),
(7.67, -0.35),
(8.14, -1.18),
(7.53, -0.77),
(7.98, -0.47),
(7.86, -0.3),
(7.82, -0.18),
(7.72, -0.33),
(7.41, -1.47),
(8.06, -0.73)}
{
    \filldraw[red, opacity = 0.4]  \point circle (0.04);
}
\end{tikzpicture}
\end{center}
\captionsetup{width=\linewidth}
\caption{
\textit{Stage II:
Samples are propagated and enriched a single time, i.e. $L=1$, eventually leading to $\overline{b} = \overline{b}_1= b_0+b_1$ samples defining the final empirical measure $\hat{\mu}_{L+1}^{(\overline{b})}$.
}
%
%
\label{fig:stage_2}
}
\end{figure}

In the second stage of our method, the exact forward map is used.
We start by propagating $b_0$ samples drawn from $\mu_0$ obtained by the Stage I procedure and iterate them $n_0\in\mathbb{N}$ times with the exact forward model.
This yields $b_0$ samples distributed according to some intermediate measure $\mu_{1}^{(b_0)}$ defined on $\mathbb{R}^{b_0D}$.
Since we have now used the true potential, it is reasonable to assume that
\begin{equation}
    \mathrm{d}\left(\mu_{1}^{(b_0)},\bigotimes_{i=1}^{b_0}\mu_{\ast}\right) < \mathrm{d}\left(\mu_{0}^{(b_0)},\bigotimes_{i=1}^{b_0}\mu_{\ast}\right), 
\end{equation}
where $\mu_{0}^{(b_0)} = \bigotimes_{i=1}^{b_0}\mu_0$ defines a product measure on $\mathbb{R}^{b_0D}$.
Within the ensemble enrichment, we change the viewpoint from distributions on the product space to ensemble distributions on $\mathbb{R}^D$.
Throughout this manuscript, we shall use the wedge symbol $\hat{\phantom{\mu}}$ to flag a distribution as an empirical measure or an ensemble distribution.
For instance, the intermediate distribution $\mu_1^{(b_0)}$ on the product space $\mathbb{R}^{b_0D}$ gives rise to an empirical (random) measure $\hat{\mu}_1^{(b_0)}$ on $\mathbb{R}^D$. 

\noindent 
Next, we define a \textit{number of enrichments} $L\in\mathbb{N}_0$ and \textit{batch sizes} $b_0,\ldots,b_{L}\in \mathbb{N}$ with $\overline{b} = \sum_{\ell=0}^{L}b_\ell$.
Furthermore, we denote by $\overline{b}_k = \sum_{\ell=0}^k$ the \textit{partial summed batch sizes}.   
Formally, an \textit{ensemble enrichment scheme} $\mathcal{E}$ produces an empirical measure $\hat{\mu}^{(b+a)}_{\mathrm{enriched}}$ from an empirical measure $\hat{\mu}^{(b)}_{\mathrm{original}}$ for some arbitrary $b\in\mathbb{N}$ and a dependent number of additional samples $a\in\mathbb{N}$.
For now, we tacitly assume that both distributions are close enough in the sense that
\begin{equation}
    \label{eq:enriched_dist}
    \mathbb{E}\left[\mathrm{d}\left(\hat{\mu}_{\mathrm{original}}^{(b)}, \hat{\mu}_{\mathrm{enriched}}^{(b+a)} \right)\right]< \epsilon
\end{equation}
for some suitable threshold $\epsilon > 0$.
%
We then apply $\mathcal{E}$ to the current batch of $b_0$ samples at level $\ell =1$ to draw $b_1$ additional samples, yielding a total of $\overline{b}_1$ samples distributed according to some measure $\hat{\mu}_{1}^{(\overline{b}_1)}$, which is close to $\hat{\mu}_{1}^{(b_0)}$ in the sense of \eqref{eq:enriched_dist}.
This enriched batch is propagated through the dynamics $n_1\in\mathbb{N}$ times, yielding $\overline{b}_1$ samples distributed according to an intermediate measure $\hat{\mu}_{2}^{(\overline{b_1})}$.
The process continues iteratively for $\ell=2,\ldots,L $: at level $\ell$ the samples distributed according to the intermediate measure $\mu_{\ell}^{(\overline{b}_{\ell-1})}$ are enriched with $b_{\ell}$ samples by application of $\mathcal{E}$.
The new batch of $\overline{b}_\ell$ samples is distributed according to $\hat{\mu}_{\ell}^{(\overline{b}_\ell)}$.
This batch is then iterated $n_\ell\in\mathbb{N}$ times through the dynamics, leading to $\overline{b}_\ell$ samples drawn from a measure $\hat{\mu}_{\ell+1}^{(\overline{b}_\ell)}$.
Finally, we get an ensemble distributed according to $\hat{\mu}_{L+1}^{(\overline{b})}$.
The described process is sketched for the case of a single enrichment (i.e. $L=1$) in Figure~\ref{fig:stage_2}.
With $n_\text{iter} := \sum_{\ell=0}^L n_\ell$, the number of forward calls in this scheme is given as 
\begin{equation}
\label{eq:callcount}
    \# \text{forward calls} = b_0 n_\text{iter} +  b_1 ( n_\text{iter} - n_0 ) 
    + \ldots + b_{L} ( n_\text{iter} - \sum_{\ell=0}^{L-1} n_\ell ) 
    = 
    \overline{b} n_\text{iter}  -  \sum_{\ell=1}^{L} b_\ell \sum\limits_{k=0}^{\ell-1} n_k .
\end{equation}

To quantify the computational cost reduction, let $b_\ell = p_\ell \overline{b}$ for a percentile $p_\ell \in (0,1)$ such that $\sum_{\ell=0}^{L}p_\ell = 1$ and let $n_\ell = c_\ell n_\text{iter}$ for some $c_\ell \in (0,1)$ with $\sum_{i=0}^{L} c_\ell = 1$.
Then, \eqref{eq:callcount} becomes
\begin{equation}
    \# \text{forward calls} =  \overline{b}n_\text{iter}\underbrace{\left( 1 - \sum_{\ell=1}^{L} p_\ell \sum\limits_{k=0}^{\ell-1} c_k  \right)}_{=:c}.
\end{equation}
Hence, a reduction factor of $c\in [1/n_\text{iter},1)$ is obtained that only depends on the parameters $L$, $p_{\ell}$ and $c_{\ell}$ for $\ell=0,\ldots,L$.
Eventually, the number of forward map evaluations is effectively reduced to 
\begin{equation}
 \#\text{forward calls} = b_\text{surr} +  c\overline{b}n_\text{iter}.
\end{equation}

There are two main design aspects of the method that require a further analysis: first, the choice of the particle propagator and second, how to perform the ensemble enrichment.
The remainder of this section is concerned with answering the first question.
For this, we provide an overview of Langevin sampling methods, which are at the center of our method.
The question of how to generate new samples is discussed subsequently in Section~\ref{sec:enrichment}.

\subsection{Langevin sampling methods}
\label{sec:propagators}



The class of particle propagators considered in this work is based on interacting particle systems. The starting point is a \textit{single-particle} first order overdamped Langevin process of the form
\begin{equation}
    \mathrm{d}y_t = -\nabla\Phi(y_t)\mathrm{d}t + \sqrt{2}\mathrm{d}W_t, \label{eq:basic_SDE}
\end{equation}
where $W_t$ is $D$-dimensional Brownian motion and the potential $\Phi$ is defined by the Bayesian inference problem via~\eqref{eq:potential}.
The probability density function $\pi_t$ of the particle $y_t$ at time $t\geq0$ satisfies the Fokker-Planck equation \cite{oksendal2003stochastic}
\begin{equation}\label{eq:basis_Fokker}
    \partial_t\pi_t = \nabla\cdot \left( \pi_t \nabla (\log\pi_t - \log\pi_\ast) \right). 
\end{equation}
It can be seen immediately that $\pi_\ast$ is a stationary solution, in particular the right-hand side of \eqref{eq:basis_Fokker} equals zero for $\pi_t=\pi_\ast$.
Hence, the posterior $\mu_\ast$ is an invariant measure of the process \eqref{eq:basic_SDE}.
Furthermore, a desirable property of the process is ergodicity. This means that $\mu_\ast$ is the only invariant measure and all initial measures converge to $\mu_\ast$ in a suitable sense as $t\rightarrow \infty$.
Consequently, $y_t$ defined by \eqref{eq:basic_SDE} is distributed according to the posterior in the limit $t\rightarrow\infty$.
Ergodicity is ensured, for example, under certain growth conditions on the potential $\Phi$ \cite{garbuno2020affine}. \\



\noindent
There exists a vast literature on extensions to dynamics of the form \eqref{eq:basic_SDE} usually with the goal of accelerating convergence to the posterior by introducing interaction between multiple particles \cite{garbuno2020interacting, garbuno2020affine,reich2021fokker}. Collecting $B\in\mathbb{N}$ particles $\{y_t^{(i)}\}_{i=1}^B$ at time $t$ into a vector
\begin{equation*}
    Y_t = \operatorname{vec}(y_t^{(1)},y_t^{(2)},\hdots, y_t^{(B)}) \in \mathbb{R}^{DB},\quad t\geq 0,
\end{equation*}
many interacting particle approaches admit the general form
\begin{equation}\label{eq:general_SDE}
    \mathrm{d}y^{(i)}_t = -A(Y_t)\nabla_{y^{(i)}_t}\mathcal{V}(Y_t)\mathrm{d}t + \Gamma(Y_t)\mathrm{d}W^{(i)}_t\qquad\text{for } i=1,\hdots,B.
\end{equation}
Here, $A(Y_t)\in \mathbb{R}^{D\times D}$, $\Gamma(Y_t)\in\mathbb{R}^{D\times J}$ for some $J\in\mathbb{N}$, $W^{(i)}_t$ are independent $J$-dimensional Brownian motions for $i=1,\hdots,J$ and $\mathcal{V}\colon\mathbb{R}^{D B}\rightarrow \mathbb{R}$ usually depends on the potential $\Phi$.
For convenience, we introduce a block notation for \eqref{eq:general_SDE}, which becomes
\begin{equation}
    \mathrm{d}Y_t = -\bm{A}(Y_t) \nabla_{Y_t} \mathcal{V}(Y_t) \mathrm{d}t + \bm{\Gamma}(Y_t)\mathrm{d}W_t,
\end{equation}
where $\bm{A}(Y_t) \in \mathbb{R}^{D B\times D B}$ and $\bm{\Gamma}(Y_t) \in \mathbb{R}^{D B\times JB}$ are block matrices with all $B$ blocks equal to $A(Y_t)$ and $\Gamma(Y_t)$ respectively.
Moreover,
\begin{equation}
    \nabla_{Y_t} \mathcal{V}(Y_t) = \operatorname{vec}\left(\nabla_{y^{(i)}_t}\mathcal{V}(Y_t), ~i = 1,\ldots, B\right)
\end{equation}
and $W_t$ is $JB$-dimensional Brownian motion. 

Choosing $A \equiv I_d$, $\Gamma \equiv \sqrt{2}I_d$ and $\mathcal{V}(Y_t) = \sum_i^{B} \Phi(y_t^{(i)})$ in \eqref{eq:general_SDE} leads to a particle system where each particle follows the process \eqref{eq:basic_SDE} independently.
In this case, there is no interaction between the particles.
Moreover, the system lacks \textit{affine invariance} \cite{goodman2010ensemble}, meaning that it does not retain its convergence properties under linear affine transformations of the state variables.
Both properties can be important to enable accelerated convergence \cite{garbuno2020affine}, e.g. in the case of multimodal or highly skewed posteriors.
We hence review some more involved methods of the form \eqref{eq:general_SDE} that address these issues.
For this, we define the time-dependent ensemble mean 
\begin{equation}
    \overline{y}_t = \dfrac{1}{B}\sum_{i=1}^B y_t^{(i)} \in \mathbb{R}^{D}, 
\end{equation} 
and 
\begin{equation}
    \overline{Y}_t = \mathrm{vec}(\overline{y}_t,\ldots,\overline{y}_t) \in \mathbb{R}^{DB},
\end{equation}
as well as the empirical covariance
\begin{equation}
    \label{eq:emp_covariance}
    C(Y_t) = \frac{1}{B} \sum\limits_{i=1}^B (y_t^{(i)} - \overline{y}_t)(y^{(i)} - \overline{y}_t)^T\in\mathbb{R}^{D\times D}.
\end{equation}  
Furthermore, we define the product posterior density 
\begin{equation}
    \label{eq:product_post}
    \tilde{\pi}_{\ast}(Y) = \prod_{i=1}^B \pi_{\ast}(y^{(i)}), \qquad \text{for}\quad  Y=\operatorname{vec}(y^{(1)},\ldots,y^{(B)}) \in \mathbb{R}^{DB},
\end{equation}
with $y^{(i)}\in\mathbb{R}^D$ for $i=1,\ldots,B$.
In what follows, we briefly review some extensions of the classic Langevin dynamics from \eqref{eq:basic_SDE} in historical order.

\vspace{1ex}
\paragraph{Scaled first order overdamped Langevin dynamics}
As a first step towards affine invariance, \eqref{eq:basic_SDE} is extended by the introduction of a positive definite scaling matrix $C\in\mathbb{R}^{D\times D}$.
The choice $A \equiv C$, $\Gamma \equiv \sqrt{2}C^{1/2}$ and $\mathcal{V}(Y) = \sum_i^{B} \Phi(y^{(i)})$ in \eqref{eq:general_SDE} leads to the dynamics
\begin{equation}\label{eq:scaled_SDE}
    \mathrm{d}y^{(i)}_t = -C\nabla\Phi(y^{(i)}_t)\mathrm{d}t + \sqrt{2}C^{1/2}\mathrm{d}W_t\qquad\text{for $i=1,\hdots,B$}.
\end{equation}
Ideally, $C$ should be close to the (unknown) posterior covariance matrix.
The Fokker-Planck equation for the PDF $\pi_t^{(i)}$ of the $i$-th particle at time $t$ now reads
\begin{equation}
    \partial_t\pi_t = \nabla\cdot \left( \pi_t C \nabla (\log\pi_t - \log\pi_\ast) \right),
\end{equation}
retaining the posterior $\mu_\ast$ as invariant measure.
This system has no interaction between particles and is not affine invariant for time-homogeneous $C$.
However, affine invariance can be achieved for $C=C(t)$ equal to the process covariance matrix \cite{garbuno2020affine}.
This is a key observation, laying the groundwork for the following sections.

\vspace{1ex}
\paragraph{Ensemble Kalman Sampler (EKS, cf. \cite{garbuno2020interacting})}
    
The EKS employs a time dependent scaling $C = C(Y_t)$ given by the empirical covariance. In particular, this scaling depends nonlinearly on the current ensemble.
Setting $A(Y_t) = C(Y_t)$, $\Gamma(Y_t)=\sqrt{2}C(Y_t)^{1/2}$ and $\mathcal{V}(Y_t) = \sum_i^{B} \Phi(y_t^{(i)})$ in \eqref{eq:general_SDE}, the process equations become 
\begin{equation}
\mathrm{d}y^{(i)}_t = -C(Y_t)\nabla\Phi(y_t^{(i)})\mathrm{d}t + \sqrt{2}C(Y_t)^{1/2}\mathrm{d}W_t^{(i)},\quad i=1,\ldots,B.
\label{eq:EKSparticle}
\end{equation}
This choice of scaling is motivated by the large particle limit $B\rightarrow\infty$, which formally leads to the mean field equation
\begin{equation}\label{eq:EKS_mean_field_limit}
    \mathrm{d}y_t = -C(\pi_t)\nabla\Phi(y_t)\mathrm{d}t + \sqrt{2}C(\pi_t)^{1/2}\mathrm{d}W_t
\end{equation}
with the true process covariance $C(\pi_t) = \mathbb{E}_{y\sim\pi_t}[(y-\mathbb{E}_{\pi_t})(y-\mathbb{E}_{\pi_t})^{\intercal}]$. Here,  $\mathbb{E}_{\rho} = \int_{\mathbb{R}^D} y \rho(y) \mathrm{d}y$ denotes the first moment of a probability density $\rho$ on $\mathbb{R}^D$.
The corresponding Fokker-Planck equation 
\begin{equation}\label{eq:eks_fokker_planck}
    \partial_t\pi_t = \nabla\cdot \left( \pi_t C(\pi_t) \nabla (\log\pi_t - \log\pi_\ast) \right)
\end{equation}
and its associated process \eqref{eq:EKS_mean_field_limit}
are shown to be affine invariant in \cite{garbuno2020affine}, while retaining the  invariant measure $\mu_{\ast}$.
    
\vspace{1ex}
\paragraph{Affine Invariant Langevin Dynamics (ALDI, cf. \cite{garbuno2020affine})}
It is shown in \cite{reich2019note} and further elaborated on in \cite{garbuno2020affine} that the posterior $\mu_\ast$ is actually not invariant under the particle system \eqref{eq:EKSparticle}, i.e. in the case of finitely many particles.
Instead, the finite ensemble version \eqref{eq:EKSparticle} requires an additional correction term in the potential. 
To see this, note that in block notation \eqref{eq:EKSparticle} becomes
\begin{equation}\label{eq:EKS_block}
    \mathrm{d}Y_t = \bm{C}(Y_t)\nabla \ln \tilde{\pi}_{\ast}(Y_t)\mathrm{d}t + \sqrt{2}\bm{C}(Y_t)^{1/2}\mathrm{d}W,
\end{equation}
where $\bm{C}(Y)\in\mathbb{R}^{D B\times D B}$ is a block diagonal matrix with $B$ block entries $C(Y)$.
The associated Fokker-Planck equation for the product density $\tilde{\pi}_t$ on $\mathbb{R}^{D B}$ now reads
\begin{equation}\label{eq:fokker_bad}
    \partial_t\tilde{\pi}_t = \nabla\cdot \left( \tilde{\pi}_t \bm{C} \nabla (\log\tilde{\pi}_t - \log\tilde{\pi}_\ast) + \tilde{\pi}_t\nabla\bm{C} \right),
\end{equation}
with the vector valued divergence $(\nabla\bm{C})_i = \sum_{j=1}^{NB} \partial_j \bm{C}_{ij}$.
Due to this divergence term, $\tilde{\pi}_\ast$ is not invariant under \eqref{eq:fokker_bad} for a finite number of particles.
However, a straightforward calculation (see \cite{reich2019note} for details) yields
\begin{equation}
    \nabla \bm{C}(Y) = \frac{D + 1}{B}(Y-\overline{Y}).
\end{equation}
Hence, the undesired term in the Fokker-Planck equation can be eliminated by replacing the drift in \eqref{eq:EKS_block} accordingly with
\begin{equation*}
    \bm{C}(Y_t)\nabla \ln \tilde{\pi}_{\ast}(Y_t) \longrightarrow \bm{C}(Y_t)\nabla \ln \tilde{\pi}_{\ast}(Y_t) + \frac{D +1}{B}(Y_t-\overline{Y}_t).
\end{equation*}
ALDI adds the correction term to the drift term of the EKS.
Hence, $\mathcal{V}$ from \eqref{eq:general_SDE} takes the form $\mathcal{V}(Y) = \sum_{i=1}^{B} \Phi(y^{(i)}) - \frac{D +1}{2}\log|C(Y)|$.
Using the identity $C(Y)\nabla_{y^{(i)}}\log|C(Y)| = \frac{2}{B}(y^{(i)}-\overline{y})$, this leads to
\begin{equation}\label{eq:aldi}
    \mathrm{d}y^{(i)}_t = -C(Y_t)\nabla\Phi(y_t^{(i)})\mathrm{d}t + \frac{D +1}{B}(y_t^{(i)}-\overline{y}_t)\mathrm{d}t + \sqrt{2}C(Y_t)^{1/2}\mathrm{d}W_t^{(i)}.
\end{equation}
Under strong growth bound conditions on $\Phi$, $\nabla \Phi$ and $\operatorname{Hess}\Phi$, and given $B>D+1$, ALDI is ergodic, i.e. $\tilde{\pi}_t$ converges to $\otimes_{i=1}^B\mu_\ast$ as $t\to\infty$ in total variation distance.
The correction term not only makes $\tilde{\pi}_{\ast}$ invariant under the process, it also retains affine invariance for the finite particle system and its gradient-free version \cite{garbuno2020affine}. \\

\noindent
In practice, ALDI is used with a non-symmetric generalization of the square root
\begin{equation}\label{eq:nonsym_sqrt}
    C(Y_t)^{1/2} = \dfrac{1}{\sqrt{B}}\left( y_t^{(1)} - \overline{y}_t,\ldots, y_t^{(B)} - \overline{y}_t \right) \in \mathbb{R}^{D \times B}
\end{equation}
such that $C(Y) = C(Y)^{1/2}(C(Y)^{1/2})^{\intercal}$.
In contrast to the proper symmetric square root, \eqref{eq:nonsym_sqrt} can be obtained without additional computational cost.
Recently, the formalism of ALDI was extended to other forms of time-dependent scaling matrices such as \textit{localized empirical covariances} \cite{reich2021fokker} of the form
  \begin{equation}
    C^{(i)}(Y_t) = \sum_{j=1}^B \omega^{ij}_t (y^{(j)}_t - \overline{y}^{(i)}_t)(y^{(j)}_t - \overline{y}^{(i)}_t)^{\intercal},
\end{equation}
for weights $\omega_t^{ij}\in\mathbb{R}$. 
This approach has been demonstrated to be effective for multimodal distributions \cite{reich2021fokker}, but it requires the actual computation of the square root $C(Y_t^{(i)})^{1/2}$ at each time step to build the localized empirical covariances.
The respective cubic scaling in the number of samples might becomes computationally costly compared to the generalized non-symmetric square root \eqref{eq:nonsym_sqrt} used in ALDI.
Hence, to handle multimodalities in this work, we instead apply homotopy techniques to alter the potential, as in \ref{eq:annealing}, retaining standard empirical covariances.


\section{Ensemble enrichment}
\label{sec:enrichment}
The goal of this section is to introduce various enrichment strategies that lead to a reduction of the interaction with the forward model within the Langevin dynamics.
To that end, we will distinguish between three types of discrete measures. First, we define a general \textit{discrete measure} $\hat{\mu}^{(B)}$ for $B\in\mathbb{N}$ through a set of points $\{y^{(i)}\}_{i=1}^B$
in $\mathbb{R}^D$ by
\begin{equation}\label{eq:discrete_measure}
    \hat{\mu}^{(B)} = \dfrac{1}{B}\sum_{i=1}^B \delta_{y^{(i)}},
\end{equation}
where $\delta_y$ denotes the Dirac measure concentrated in $y\in\mathbb{R}^D$. 
If the points $\{y^{(i)}\}_{i=1}^B$ in \eqref{eq:discrete_measure} are i.i.d. samples from some distribution $\mu$ on $\mathbb{R}^D$, we will call $\mu^{(B)}=\hat{\mu}^{(B)}$ (a realization of) an \textit{empirical measure} associated with $\mu$. Third, if  $\{y^{(i)}\}_{i=1}^B = \{y_t^{(i)}\}_{i=1}^B$ corresponds to the solution of a $B$-particle Langevin process at time $t$, we will call $\hat{\mu}_t = \hat{\mu}^{(B)}$ an \textit{ensemble distribution}. In all of these cases, we suppress the dependence of the measure on the (random) set of points. In the last case, we additionally suppress the dependence on the batch size $B$, as it will be clear by context. Note that the $\{y_t^{(i)}\}$ are not necessarily i.i.d. in this case (in fact they are not independent for the solution of \eqref{eq:aldi}). 
 We will sometimes use the particle ensemble $\{y_t^{(i)}\}_{i=1}^B$ and the corresponding ensemble distribution $\hat{\mu}_t$ interchangeably.
The set of discrete measures on $\mathbb{R}^{D}$ of the form \eqref{eq:discrete_measure} for arbitrary $B\in\mathbb{N}$ and an arbitrary set of points $\{y^{(i)}\}_{i=1}^B$ in $\mathbb{R}^D$ will be denoted by $\mathcal{M}(\mathbb{R}^{D})$. The set of all families $(\hat{\mu}_{t})_{t\geq 0}$ with $\hat{\mu}_{t}\in \mathcal{M}(\mathbb{R}^{D})$ we denote by $\mathcal{M}_{t\geq 0}(\mathbb{R}^{D})$. 
Subsequently, the notion of ensemble enrichment on such families of discrete measures in combination with an enrichment time $t_a$ and a number $a\in\mathbb{N}$ of requested additional particles is defined.

\begin{definition}
\label{def:enrichment_scheme}
Let $(\Omega,\sigma, \mathbb{P})$ be an abstract probability space.
Then, a map $\mathcal{E}\colon \mathcal{M}_{t\geq 0}(\mathbb{R}^{D})\times \Omega \times \mathbb{R}_{+} \times \mathbb{N} \longrightarrow \mathcal{M}(\mathbb{R}^{D})$ is called an \textnormal{ensemble enrichment scheme} if the following condition $\mathrm{(C)}$ is satisfied.
\begin{itemize}
    \item[$\mathrm{(C)}$] If $\hat{\mu}_{t}$ is determined by an ensemble of $b$ 
    particles at $t=t_a$, then for $a \in \mathbb{N}$, $\mathcal{E}[((\hat{\mu}_t)_{t\geq 0},\omega,t_a,a)]$ is a discrete measure determined by $b+a$ particles.
\end{itemize}
\end{definition}
\noindent
The dependence on $\omega\in\Omega$ models random effects, e.g. introduced through random perturbations or random selection of particles. In practice, such dependence is given as 
\begin{equation}\label{eq:random_enrichment}
\mathcal{E}[((\hat{\mu}_t)_{t\geq 0},\omega,t_a,a)] 
= 
\mathcal{E}[((\hat{\mu}_t)_{t\geq 0}, \zeta(\omega),t_a,a)] 
\end{equation}
with some random variable $\zeta$ independent of the $(\hat{\mu}_t)_{t\geq 0}$. If (C) is satisfied, we call $\hat{\mu}_{t_a}$ and $\{y_{t_a}^{(i)}\}$ the \textit{original measure} and \textit{original batch}, respectively.
Moreover, $\mathcal{E}[((\hat{\mu}_t)_{t\geq 0},\omega,t_a,a)]$ is the \textit{enriched measure} and its corresponding sample batch the \textit{enriched batch}.
When the associated family of measures is clear from the context, we call this the application of an enrichment scheme to a batch $\{y_{t_a}^{(i)}\}$.
This can be considered the more natural viewpoint and appeals to our intuitive understanding of enrichment as adding samples to an existing batch.
The reason we formally define an enrichment scheme as a map acting on a family of measures is two-fold.
First, we want an enrichment to be able to rely on past and future values of the process $\{y_t^{(i)}\}$.
Second, the operation on measures instead of particle ensembles allows to view the solution provided by our LIDL method as a time-continuous process.
To make this point clear, note that we cannot define the result of the process described in Section \ref{sec:setting} as a time-dependent ensemble $\{y_t^{(i)}\}_{i=1}^B$ since the batch size $B$ changes over time.
This however can be achieved in terms of a family of ensemble distributions $(\hat{\mu}_t)_{t\geq 0}$.
This formulation is agnostic with regard to the current batch size, which is made precise in Definition \ref{def:lidl}.

Having formally defined what an ensemble enrichment scheme is, the question what constitutes a \textit{good} enrichment strategy arises immediately.
Since in the end we are concerned with convergence of the sampling scheme, the distance of the enriched measure to the posterior has to be controlled.
In the remainder of this paper we use the \textit{Kantorovich–Rubinstein metric} (or \textit{p-Wasserstein distance}) of measures for $p=2$.
Let $\mathcal{D}_p(\mathcal{V})$ be the space of measures on a metric space $\mathcal{V}$ with finite second moments.
The 2-Wasserstein distance of measures $\mu,\nu\in\mathcal{D}_2(\mathbb{R}^{D})$ is then defined by
\begin{equation}
    \begin{aligned}
    \label{eq:wasserstein_metric}
\mathcal{W}_{2}(\mu,\nu) = \left[ \min\limits_{\pi\in\mathcal{D}(\mathbb{R}^{D}\times\mathbb{R}^{D})}\{ \langle \pi, c_2\rangle \colon \pi_1 =\mu , \pi_2 = \nu \} \right]^{1/2},
    \end{aligned}
\end{equation}
where $\pi_1 = \int_{\mathbb{R}^{D}} \mathrm{d}\pi(\cdot,y)$ and $\pi_1 = \int_{\mathbb{R}^{D}}  \mathrm{d}\pi(x,\cdot)$ are the marginals of the transport plan and $c_2(x,y) = \frac{1}{2}| x-y |^2$. The $2$-Wasserstein metric space is denoted by $\mathcal{W}_2(\mathbb{R}^D):=(\mathcal{D}_2(\mathbb{R}^D),\mathcal{W}_2)$.
The following enrichment strategies aim to approximately follow the flow towards the posterior measure, as initiated by the original measure.

\subsection{Enrichment schemes}
\label{sec:enrichment schemes}

Let $a,b\in\mathbb{N}$ with $a\leq b$ and let the measure $\hat{\mu}\in\mathcal{M}(\mathbb{R}^{D})$ be determined by $b\in\mathbb{N}$ particles $y^{(i)}$. Furthermore, let the random variable $\zeta_{a}$ encode the uniformly at random selection of $a$
particles out of $\{y^{(i)}\}_{i=1}^{B}$ with realizations denoted by $\zeta_{a}(\omega)[\hat{\mu}]$ being a discrete measure determined by $a$ particles. For instance, if the first $i=1,\ldots,a$ particles have been selected by $\zeta_{a}$, the resulting measure is $\frac{1}{a}\sum_{i=1}^{a} \delta_{y^{(i)}}$.



\subsubsection{Slicing}
\label{sec:slicing}
The idea of (time)-\textit{slicing} relies on the idea to enrich the batch of size $b$ at time $t_a$ by adding particles from other batches associated to a selection of finitely many ensemble distributions $\hat{\mu}_t$ from the family $(\hat{\mu}_t)_{t\geq 0}$.  A canonical slicing is defined at time points around $t_a$. For this let, $0 < \Delta t < t_a$ and assume the batch size of the family of measures to be equal to $b \geq a$ in the neighborhood $[t_a - \Delta t,t_a + \Delta t]$. \\

\noindent
We define the \textit{forward slicing} enrichment $\mathcal{E}_{+\Delta t}$ via
\begin{equation}\label{eq:forward_slice}
    \mathcal{E}_{+\Delta t}((\hat{\mu}_t)_{t\geq 0}, \omega, t_a, a) = \dfrac{1}{b+a}\left( b\hat{\mu}_{t_a} + a\zeta_{a}(\omega)\left[\hat{\mu}_{t_a + \Delta t}\right] \right). 
\end{equation}

In the idealized setting where $b=a$ and $\mathcal{W}_2(\hat{\mu}_t,\mu_{\ast})$ is monotonically decreasing in $t$, we get that the 2-Wasserstein distance of \eqref{eq:forward_slice} to the posterior is bounded by $\mathcal{W}_2(\hat{\mu}_{t_a},\mu_{\ast})$ and hence, \eqref{eq:forward_slice} should at least preserve the distance to the posterior at time $t_a$.
This property is obviously desirable but it comes at the cost of carrying out the forward model evaluations necessary to compute the measure $\hat{\mu}_{t_a+\Delta t}$ from $\hat{\mu}_{t_a}$. Note that in practice the measure family is only available for $t\leq t_a$ without additional computational burden.\\

\noindent
To avoid the additional forward model calls, one may instead use a \textit{backward slicing} scheme $\mathcal{E}_{-\Delta t}$ defined by
\begin{equation}\label{eq:backward_slice}
    \mathcal{E}_{-\Delta t}((\hat{\mu}_t)_{t\geq 0},\omega, t_a, a) = \dfrac{1}{b+a}\left( b\hat{\mu}_{t_a} + a\zeta_a(\omega)\left[\hat{\mu}_{t_a - \Delta t}\right] \right). 
\end{equation}
Here, no extra forward model evaluations are needed since $\hat{\mu}_{t_a-\Delta t}$ is readily available at time $t_a$.
However, compared to the forward slicing scheme, we only get a 2-Wasserstein distance bound proportional to $\mathcal{W}_2(\hat{\mu}_{t_a-\Delta t},\mu_{\ast})$ when assuming monotonic decrease of the Wasserstein distance in $t$.

\subsubsection{Diffusion propagation}
\label{sec:diffusion_step}

Recall that computing the measure $\hat{\mu}_{t_a+\Delta t}$ in the forward slicing scheme \eqref{eq:forward_slice} involves propagating $\hat{\mu}_{t_a}$ through the underlying Langevin process \eqref{eq:general_SDE} from time $t_a$ to $t_a+\Delta t$.
For small $\Delta t$, we can define an  approximation to $\hat{\mu}_{t_a+\Delta t}$ by propagating the particle ensemble $\{y_{t_a}^{(i)}\}$ associated with $\hat{\mu}_{t_a}$ using only the diffusion part of \eqref{eq:general_SDE},
\begin{equation}\label{eq:diffusion_propagation_samples}
    \mathrm{d}y^{(i)}_t =  \Gamma(Y_t)\mathrm{d}W^{(i)}_t.
\end{equation}
We encode the effect of this \textit{diffusion propagation} on $\hat{\mu}_{t_a}$ by a random variable $\zeta^{\Delta t}_W$ with realizations $\zeta^{\Delta t}_{W}(\omega)[\hat{\mu}_{t_a}] \approx \hat{\mu}_{t_a+\Delta t}$. By $\zeta_{a} \circ \zeta^{\Delta t}_{W}$ with realizations $(\zeta_{a} \circ \zeta^{\Delta t}_{W})(\omega)[\hat{\mu}_{t_a}] = (\zeta_{a}(\omega) \circ \zeta^{\Delta t}_{W}(\omega)) [\hat{\mu}_{t_a}]$,
we denote the concatenation with the previously defined random selection. Hereby, we choose $\zeta_a$ and $\zeta_W^{\Delta t}$ to be independent.\\

\noindent
The diffusion propagation scheme is then defined by
\begin{equation}\label{eq:diffusion_propagation}
    \mathcal{E}^{\mathrm{diff}}_{+\Delta t}((\hat{\mu}_t)_{t\geq 0}, \omega, t_a, a) = \dfrac{1}{b+a}\left( b\hat{\mu}_{t_a} + a(\zeta_a \circ \zeta^{\Delta t}_{W})(\omega)\left[\hat{\mu}_{t_a}\right] \right). 
\end{equation}
This scheme entirely avoids any forward model calls, while also not relying on the history of the process as in the backward slicing scheme. This approximation is also motivated by the numerical time discretization of \eqref{eq:general_SDE}. To that end, consider an Euler-Maruyama discretization $\tilde{y}^{(i)}_t$ of the process $y_t^{(i)}$ with time step $\Delta t$ defined through
\begin{equation}\label{eq:euler_step}
    \tilde{y}^{(i)}_{t+\Delta t} = \tilde{y}^{(i)}_{t} -\Delta t A(\tilde{Y}_{t})\nabla_{\tilde{y}^{(i)}_{t}}\mathcal{V}(\tilde{Y}_{t}) + \sqrt{\Delta t} \Gamma(\tilde{Y}_{t}) \xi^{(i)}_{t}.
\end{equation}
Here, $\xi_t \sim \mathcal{N}(0,I_d) $ are i.i.d. increments and $\tilde{Y}_t = \operatorname{vec}(\tilde{y}_t^{(1)},\ldots,\tilde{y}_t^{(b)})$.
Vice-versa a discretization of \eqref{eq:diffusion_propagation_samples} leads to the relation
\begin{equation}\label{eq:diffusion_step}
    \tilde{y}^{(i)}_{t+\Delta t} = \tilde{y}^{(i)}_{t} + \sqrt{\Delta t} \Gamma(\tilde{Y}_{t}) \xi^{(i)}_{t},
\end{equation}
where we used the same notation to underline similarities to \eqref{eq:euler_step}.
While in \eqref{eq:euler_step} the drift part scales with $\Delta_t$, the diffusion part scales with $\sqrt{\Delta t}$, and hence an approximation via \eqref{eq:diffusion_step} is justified provided $\Delta t \ll 1$ and $A(Y_{t})\nabla_{y^{(i)}_{t}}\mathcal{V}(Y_{t})$ being bounded.
Consequently, the diffusion propagation approximates a forward slice, without the computational burden of evaluating $\mathcal{V}$.
Note that, in this time-discrete setting, the randomness of the propagation, formerly represented by $\zeta_{W}^{\Delta t}$ is now encoded in the increments $\xi_t^{(i)}$ for $t=t_a$.





In this time-discrete setting, the randomness of the propagation, which was formerly encoded in $\zeta_{W}^{\Delta t}$ is now encoded in the increments $\xi_k^{(i)}$.

\subsubsection{Random kicks}
\label{sec:random_kicks}

The discrete viewpoint taken in \eqref{eq:euler_step} and \eqref{eq:diffusion_step} allows for the interpretation of the diffusion step as a special case of a \textit{random kicks} method
\begin{equation}\label{eq:random_kick}
    \mathcal{E}_{\eta}((\hat{\mu}_t)_{t\geq 0}, \omega, t_a, a) = \dfrac{1}{b+a}\left( b\hat{\mu}_{t_a} + a(\zeta_a \circ \zeta_{\eta})(\omega)\left[\hat{\mu}_{t_a }\right] \right),
\end{equation}
where the random variable $\zeta_{\eta}$ encodes adding perturbation noise $\eta \sim \bigotimes_{i=1}^b\mathcal{D}(\mathbb{R}^{D})$ to the particles $\{y^{(i)}_{t_a}\}$ determining $\hat{\mu}_{t_a}$.
Note that \eqref{eq:diffusion_step} can be seen as a highly informed choice of noise, utilizing the underlying process.
In the case of the ALDI method \eqref{eq:aldi}, it preserves the covariance structure of the ensemble.

The question arises why one would consider other less informed random kicks at all. We note that in the low batch size regime one important strength of diffusion propagation, namely preserving the covariance rank as in the case of ALDI, turns out to be detrimental.
To see this, recall \eqref{eq:aldi} with covariance matrix $C(Y_t)$ given by \eqref{eq:emp_covariance} and generalized non-symmetric square root $C(Y_t)^{1/2}$ given by \eqref{eq:nonsym_sqrt}.
Consider now a batch size $b<D$ such that the covariance matrix has at most rank $b$ and is hence not positive definite.
While any proper perturbation noise $\eta$ independent of the particle ensemble (with sufficiently large enrichment size $a$) leads to a positive definite covariance matrix with probability 1, \eqref{eq:euler_step} and \eqref{eq:diffusion_step} produce additional samples in the range of the covariance matrix, not increasing its rank at all.
This means that the covariance matrix is not positive definite at any point in the future (up to numerical instability) and particles remain in the linear subspace associated to the range of the covariance.
Hence, preserving the rank of the covariance via methods like slicing and diffusion propagation only makes sense for sufficiently large batch sizes $b\geq D$, leading to full rank with probability $1$. As a simple and purely heuristic choice of a single random kick for the low batch size regime, we propose using scaled Gaussian noise
\begin{equation}
    \tilde{y}^{(i)}_{t + \Delta t} = \tilde{y}^{(i)}_{t} + \sqrt{\Delta t} \xi^{(i)}_{k}
\end{equation}
for i.i.d. increments $\xi^{(i)}_k \sim \mathcal{N}(0,I_d)$ and a step size $\Delta t $ depending on $C(\tilde{Y}_t)$. We leave the discussion of other informed choices of noise as a topic for future work.

Finally, we note that all of these methods can be extended to the case $a > b$, e.g., by adding together multiple forward/backward slices at different times $t_a \pm \Delta t_i$. The corresponding modifications of \eqref{eq:forward_slice}, \eqref{eq:backward_slice}, \eqref{eq:diffusion_propagation} and \eqref{eq:random_kick} are straightforward and left for the reader.

\subsubsection{Generalized transport approach}
\label{sec:generalized_transport}
A conceptually different approach relies on the intermediate learning of random variables, that approximately follow the distribution of $y_t\sim\pi_t$ given by \eqref{eq:EKS_mean_field_limit} and \eqref{eq:eks_fokker_planck} at time point $t=t_a$. This allows for a fast generation of samples due to a functional representation, which takes the form
\begin{equation}
    \label{eq:modelclass}
      \mathcal{M}_t(x_t) \apprxd  y_t 
\end{equation}
with a suitable model class $\mathcal{M}_t$ and some auxiliary random variable $x_t$.
Here, the approximation quality 
should be controlled in the same metric used to analyze the convergence of the particle propagator.
Once such representation is found for a fixed $t=t_a$, samples can be drawn by sampling from $x_t$ and propagating through $\mathcal{M}_t$.

A setup of particular importance arises for $t=T\gg 1$. If $\mu_T$ is close to $\mu_\ast$, then the left-hand side of \eqref{eq:modelclass} provides an approximate functional access to the posterior distribution. This concept is similar to the case of transport maps, where $x_T$ is distributed with respect to the prior distribution and $\mathcal{M}_T$ is a diffeomorphism \cite{villani2009optimal,rezende2015variational,marzouk2016introduction,brennan2020greedy}.
However, here we relax the assumption of bijectivity or continuity properties of $\mathcal{M}$ in order to enable accurate approximations of more involved distributions such as multimodal ones even when $x_T$ is a unimodal distribution.
Moreover, $x_T$ is not necessarily distributed 
with respect to the prior distribution but rather defined on some latent space.

Another approach based on \eqref{eq:modelclass} is denoted as \textit{sequential learning of generalized transport} and described in the following.
Let $0=t_0<t_1<\ldots<t_L<\infty$ and 
$$
x_{t_\ell} = y_{t_{\ell-1}},\quad \ell = 1,\ldots, L.
$$
This design leads to the form
$$
y_{t_L} \apprxd M_{t_L}\circ \ldots \circ M_{t_1}(y_0)
$$
for a suitable model class $\mathcal{M}_{t_\ell}$.
Hence, the compositional structure imitates the associated particle propagator flow.
This concept appears to be closely related to \textit{stochastic normalizing flows} \cite{wu2020stochastic,hagemann2022stochastic}.
The discussion of such a compositional approach is subject to future research.

Another special case of \eqref{eq:modelclass} is of the form 
\begin{equation}
    \label{eq:constant_aux}
    y_t \apprxd \mathcal{M}_t(x).
\end{equation}
In particular, the auxiliary random variable remains the same over the time horizon and only the model class is updated. 
A particular design for \eqref{eq:constant_aux} is realized by \textit{generative adversarial networks} (GANs), where typically $x$ corresponds to a standard normal multivariate Gaussian distribution.
As an alternative, the use of a \textit{compressed Wasserstein polynomial chaos expansion} (WPCE) was proposed in \cite{gruhlke2022low}.
Here, $x$ determines the family of orthonormal stochastic polynomials used for the approximation. 

In order to realize the representation \eqref{eq:constant_aux}, one can utilize techniques from computational optimal transport in the framework of unsupervised learning.
We define the debiased Sinkhorn divergence \cite{feydy2019interpolating} based on \eqref{eq:wasserstein_metric} 
for $\epsilon >0$ by
\begin{equation}
    \begin{aligned}\label{eq:sinkhorn_div}
    \mathcal{S}_{\epsilon}(\mu,\nu) = \mathcal{W}_{c,\epsilon}(\mu,\nu) - \dfrac{1}{2}\left( \mathcal{W}_{c,\epsilon}(\mu,\mu) + \mathcal{W}_{c,\epsilon}(\nu,\nu) \right),
    \end{aligned}
\end{equation}
for measures $\mu,\nu \in\mathcal{D}_2(\mathbb{R}^{D})$, where
\begin{equation}
\begin{aligned}
    \mathcal{W}_{c,\epsilon}(\mu,\nu) = &\min_{\pi\in\mathcal{D}(\mathbb{R}^{D}\times\mathbb{R}^{D})} \langle \pi, c \rangle + \epsilon \mathrm{KL}(\pi,\mu\otimes\nu),\\
    &\textnormal{subject to }\quad \pi \geq 0, \quad \pi_1 = \mu, \quad \pi_2 = \nu.
    \end{aligned}
\end{equation}
Here, KL denotes the Kullback-Leibler divergence.
Then, samples $y_t$ define an discrete measure $\hat{\mu}_t$ and samples from $x$ propagated through $\mathcal{M}_t=\mathcal{M}[\theta_t]$ define a discrete measure $\hat{\nu}[\theta_t]$ depending on the parameter $\theta_t$.
The desired coefficient $\theta_t$ is defined as the minimizer of
\begin{equation}
    \label{eq:sinkhornloss}
    \min\limits_{\theta}\mathcal{S}_\epsilon(\hat{\mu}_t, \hat{\nu}[\theta]).
\end{equation}
\section{Theoretical Foundations}\label{sec:theory}

We henceforth use ALDI~\eqref{eq:aldi} as our particle propagator of choice. Together with a chosen enrichment scheme from Section \ref{sec:enrichment}, this allows to define our method in a rigorous way.

\begin{definition}[LIDL]\label{def:lidl}
Set $L\in\mathbb{N}_0$ and define the index set $I_L = \{0,1,2,\ldots,L\}$.
Moreover, let
\begin{enumerate}[label=(\roman*)]
    \item $(t_\ell)_{\ell\in I_L}$, $t_{\ell}\geq t_0 = 0$ for all $\ell$ be a strictly monotonically increasing sequence of time points with $t_{L+1} = \infty$,
    \item $(b_\ell)_{\ell\in I_L}$, $b_\ell\in\mathbb{N}$ be a sequence of batch sizes,
    \item $\{y_0^i\}_{i=1}^{b_0}$ be an initial sample batch,
    \item $\mathcal{E}$ be an enrichment strategy.
\end{enumerate}
Then, an instance of a LIDL run with parameters $((t_\ell)_{\ell}, (b_\ell)_{\ell}, \{y_0^i\}, \mathcal{E} )$ applied to the inverse problem defined by \eqref{eq:posterior} produces a family of ensemble distributions $(\hat{\mu}_{t})_{t\geq0}$ on $\mathbb{R}^{D}$ by repeating the following two steps for $\ell=0,\ldots,L$:
\begin{itemize}
    \item[(Step 1)] Solve \eqref{eq:aldi} with initial conditions $\{y_\ell^i\}$ for $0\leq t \leq t_{\ell+1}-t_\ell$.
    Denote the resulting solution by $\{y_{\ell,t}^i\}$ and the corresponding ensemble distribution by $\hat{\mu}_{\{y_{\ell,t}^i\}}$. Set
    \begin{equation}\label{eq:lidl_measure}
       \hat{\mu}_{t} = \hat{\mu}_{\{y_{\ell,t-t_{\ell}}^i\}} \qquad \text{for} \quad t\in [t_\ell,t_{\ell+1}).
    \end{equation}
    \item[(Step 2)] If $\ell\leq L-1$, get the next initial sample batch $\{y_{\ell+1}^i\}$ by applying the ensemble enrichment $\mathcal{E}$ with $b_{\ell+1}$ new samples to $\{y_{\ell,t_{\ell+1}}^i\}$. 
\end{itemize}
The random process $(\hat{\mu}_t)_{t\geq0}$ is called the solution of LIDL.
A visualization of steps 1 and 2 are depicted in Figure~\ref{fig:scheme_lidl}.
\end{definition}

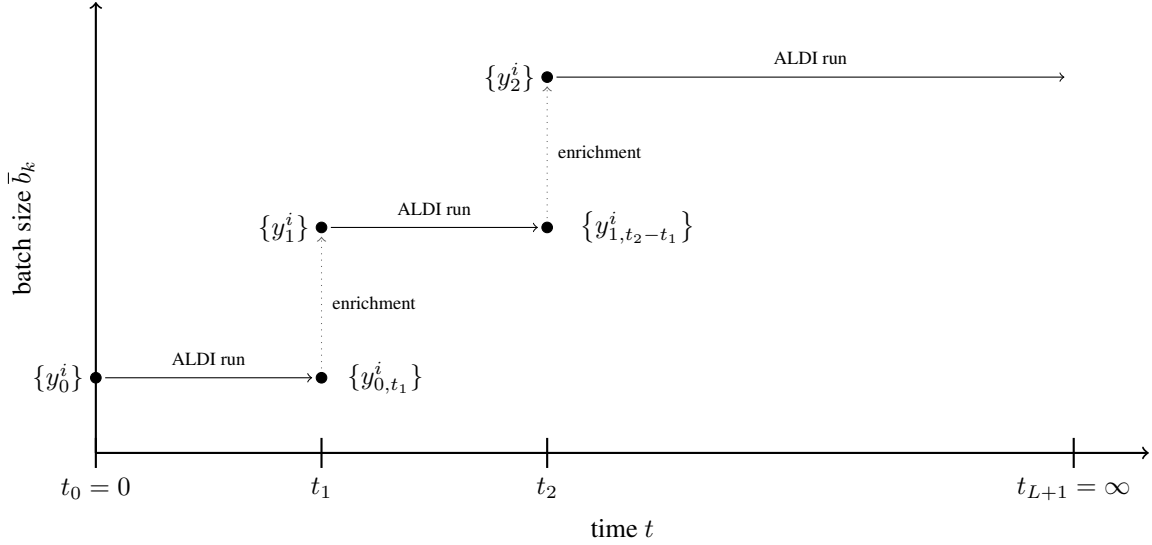
\begin{figure}[ht]
\begin{center}
\begin{tikzpicture}

\node at (0, 1)   (a1) {};
\node at (3,1) (a2) {};
\node at (3,3) (b1) {};
\node at (6,3) (b2) {};
\node at (6,5) (c1) {};
\node at (13,5) (c2) {};

\filldraw (a1) circle (2pt);
\filldraw (a2) circle (2pt);
\filldraw (b1) circle (2.pt);
\filldraw (b2) circle (2.pt);
\filldraw (c1) circle (2.pt);

\draw[->]   (a1) -- (a2);
\node at (1.5,1+0.25) {\scriptsize ALDI run};
\draw[->,dotted, opacity = 0.6]   (a2) -- (b1);
\ifnum\classstyle=0 
\node at (3.7,2) {\scriptsize enrichment};
\fi
\ifnum\classstyle=1 
\node at (3.7,2) {\scriptsize enrichment};
\fi
\ifnum\classstyle=2 
\node at (3.9,2) {\scriptsize enrichment};
\fi
\draw[->]   (b1) -- (b2);
\node at (4.5,3+0.25) {\scriptsize ALDI run};
\draw[->,dotted, opacity = 0.6]   (b2) -- (c1);
\ifnum\classstyle=0 
\node at (6.7,4) {\scriptsize enrichment};
\fi
\ifnum\classstyle=1 
\node at (6.7,4) {\scriptsize enrichment};
\fi
\ifnum\classstyle=2 
\node at (6.9,4) {\scriptsize enrichment};
\fi
\draw[->]   (c1) -- (c2);
\node at (9.5,5+0.25) {\scriptsize ALDI run};

\node at (-0.5,1) {$\{y_0^i\}$};
\node at (3+0.85,1) {$\{y_{0,t_1}^i\}$};
\node at (3-0.5,3) {$\{y_1^i\}$};
\ifnum\classstyle=0 
\node at (3-0.5+4.7+0.07,3) {$\left\{y_{1,t_2-t_1}^i\right\}$};
\fi
\ifnum\classstyle=1 
\node at (3-0.5+4.7,3) {$\left\{y_{1,t_2-t_1}^i\right\}$};
\fi
\ifnum\classstyle=2 
\node at (3-0.5+4.7+0.02,3) {$\left\{y_{1,t_2-t_1}^i\right\}$};
\fi
\node at (3-0.5+3,3+2) {$\{y_2^i\}$};

\draw[thick,->] (0,0) -- (14,0);
\draw[thick] (0,0.2) -- ++ (0,-0.4) node[below] {$t_0=0$};
\draw[thick] (3,0.2) -- ++ (0,-0.4) node[below] {$t_1$};
\draw[thick] (6,0.2) -- ++ (0,-0.4) node[below] {$t_2$};
\draw[thick] (13,0.2) -- ++ (0,-0.4) node[below] {$t_{L+1}=\infty$};
\draw[thick,->] (0,0) -- (0,6);
\node at (7,-1) {time $t$};
\node [rotate=90] at (-1,3) {batch size $\overline{b}_k$};


\end{tikzpicture}
\end{center}
\captionsetup{width=.92\linewidth}
\caption{
\textit{Schematic of a run of LIDL with $L=2$ enrichment steps. An initial batch of samples $\{y_0^i\}$ is propagated through the ALDI dynamics \eqref{eq:aldi} for $t\in[0,t_1]$. The resulting samples $\{y_{0,t_1}^i\}$ are enriched via some enrichment strategy $\mathcal{E}$ to receive a larger sample batch $\{y_1^i\}$, which is again propagated and then enriched at $t=t_2$. Finally, a full run of ALDI is performed on the final batch $\{y_2^i\}$.
\label{fig:scheme_lidl}
}}
\end{figure}

\begin{remark}\label{rem:aldi_is_lidl}
Some comments on the definition are in order.
\begin{enumerate}
\item Setting $t_{L+1}=\infty$ is a formality, guaranteeing that LIDL returns a measure $\hat{\mu}_t$ for all $t\geq 0$.
The last stage is equivalent to solving \eqref{eq:aldi} with initial conditions $\{y_L^i\}$.
\item When $L=0$, the LIDL solution becomes identical to the solution of \eqref{eq:aldi} for all $t$.
Hence, ALDI can be seen as a special case of LIDL.
\item We could define the method more generally by admitting a sample propagator like EKS, ALDI, etc. as an additional parameter.
For the sake of simplicity, we only work with ALDI and neglect this dependence in the rest of the paper.
\end{enumerate}
\end{remark}

\subsection{Convergence analysis in the linear case}

Throughout this section, we consider the Bayesian inverse problem \eqref{eq:inverseproblem} in the special case of a Gaussian prior and a linear forward map.
Hence, let $\mathcal{G}(\cdot) = A\cdot$ for some $A\in\mathbb{R}^{K\times D}$ and
\begin{equation}\label{eq:gaussian_prior}
    \pi_{\mathrm{prior}}(y) \propto \exp\left( -\dfrac{1}{2}(y-y_0)^{\intercal} \Gamma_0^{-1} (y-y_0) \right) = \exp\left( -\dfrac{1}{2}|y-y_0|^2_{\Gamma_0} \right),
\end{equation}
 with prior mean $y_0\in\mathbb{R}^{D}$ and prior covariance matrix $\Gamma_0\in\mathbb{R}^{D\times D}$. In this special case, the posterior is again Gaussian with density
\begin{equation}
    \pi_{\ast}(y) \propto \exp\left( -\dfrac{1}{2}|y-y^\ast|^2_{P^{-1}} \right),
\end{equation}
with the posterior precision matrix 
\begin{equation}
    P = C(\mu_\ast)^{-1}  = A^{\intercal}\Gamma^{-1}A + \Gamma_0^{-1}
\end{equation}
and the posterior mean
\begin{equation}\label{eq:posterior_precision}
    y^\ast = \mathbb{E}_{\mu_\ast} = P^{-1}(A^{\intercal}\Gamma^{-1}\delta + \Gamma_0^{-1}y_0).
\end{equation}
In the following, we denote by 
$\lambda_{\min}(M)$ the smallest eigenvalue of a symmetric positive definite matrix $M\in\mathbb{R}^{D,D}$.

As a first step in the convergence analysis, we aim for a consistency result in the expected 2-Wasserstein distance $\mathcal{W}_2$ between the measures $\hat{\mu}_t$ generated by LIDL and the posterior measure $\mu_{\ast}$.
By consistency we mean that for any $\delta > 0$ there is a configuration  $((t_\ell)_{\ell}, (b_\ell)_{\ell}, \{y_0^i\}, \mathcal{E} )$ of LIDL and a time $T_{\delta} > 0$ such that
\begin{equation}\label{eq:W2_consistency}
    \mathbb{E}[\mathcal{W}_2(\hat{\mu}_{T_{\delta}},\mu_{\ast})] \leq \delta.
\end{equation} 
We have established in Remark \ref{rem:aldi_is_lidl} that ALDI can be seen as a special case of LIDL with $L=0$ enrichment steps.
Hence, consistency as defined above can be achieved in a trivial manner if ALDI can be shown to be B-T-consistent, meaning that for $\delta >0$ there are $T_{\delta}, B_{T_{\delta}}$ such that the ensemble distribution $\hat{\mu}_t$ generated by ALDI satisfies \eqref{eq:W2_consistency}.
The following theorem provides sufficient conditions for consistency in this sense.

\begin{theorem}[B-T-consistency of ALDI]
\label{thm:B-T-convergence_ALDI}
        Suppose $\mathcal{G}(\cdot) = A\cdot$ is linear and $\pi_{\mathrm{prior}}$ is given by \eqref{eq:gaussian_prior}.
        Furthermore, let $\pi_0 \in \mathcal{C}^2$ be a density with bounded higher moments and for $B\in\mathbb{N}$ let $\{y_t^i\}^B_{i=1}$ be the solution of \eqref{eq:aldi} with initial condition $\{y_0^i\}_{i=1}^B$ drawn i.i.d from $\pi_0$.
        Let $\delta>0$ and $T>0$ be such that $\mathcal{W}_2(\pi(T),\pi_{\ast}) < \delta$ where $\pi(T)$ is the solution of \eqref{eq:eks_fokker_planck} with initial condition $\pi_0$. Furthermore assume that 
        \begin{equation}\label{eq:ev_condition}
            \frac{\lambda_{\text{min}}(P)\lambda_{0}(t)}{2}\ \geq 1, \quad ~\text{for all }~ 0 \leq t \leq T,
        \end{equation}
        where $\lambda_0(t) = (\lambda_{\min}(C(Y_t))^{1/2} + \lambda_{\min}(C(\pi(t)))^{1/2})^{2}$. 
        Then, there exists $T_{\delta} \leq T$ and $B_{T_{\delta}} > 0$ such that the solution $\{ y_t^i \}_{i=1}^B$ with $B=B_{T_{\delta}}$ and its corresponding ensemble distribution $\hat{\mu}_{T_{\delta}}^{B_{T_{\delta}}}$ satisfy
         \begin{equation}\label{eq:B-T-bound}
            \mathbb{E}[\mathcal{W}_2(\hat{\mu}^{B_{T_{\delta}}}_{T_{\delta}},\mu_{\ast})] \leq \delta.
        \end{equation}
    \end{theorem}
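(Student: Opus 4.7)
The plan is to insert the mean-field solution as a bridge and use the triangle inequality,
\begin{equation*}
\mathbb{E}[\mathcal{W}_2(\hat{\mu}^B_t,\mu_\ast)] \leq \mathbb{E}[\mathcal{W}_2(\hat{\mu}^B_t,\hat{\pi}^B_t)] + \mathbb{E}[\mathcal{W}_2(\hat{\pi}^B_t,\pi(t))] + \mathcal{W}_2(\pi(t),\mu_\ast),
\end{equation*}
where $\hat{\pi}^B_t$ is the empirical measure of $B$ i.i.d.\ samples from the mean-field density $\pi(t)$ solving \eqref{eq:eks_fokker_planck} with initial datum $\pi_0$. By the hypothesis on $\pi(T)$, pick $T_\delta\leq T$ with $\mathcal{W}_2(\pi(T_\delta),\mu_\ast)<\delta/3$. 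In the linear-Gaussian setup the mean-field Fokker--Planck reduces to coupled Riccati-type ODEs for mean and covariance, so the moment assumption on $\pi_0$ propagates to uniform bounds on the higher moments of $\pi(t)$ on $[0,T_\delta]$. Standard quantitative empirical-measure bounds (Fournier--Guillin) then give $\mathbb{E}[\mathcal{W}_2(\hat{\pi}^B_t,\pi(t))]\leq C\,B^{-\kappa}$ for some $\kappa=\kappa(D)>0$, uniformly on $[0,T_\delta]$.

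For the first, dynamical, summand I introduce a synchronous coupling: let $\{z^{(i)}_t\}_{i=1}^B$ be i.i.d.\ solutions of the mean-field SDE \eqref{eq:EKS_mean_field_limit} driven by the same Brownian motions $W^{(i)}$ as the ALDI ensemble $\{y^{(i)}_t\}$ of \eqref{eq:aldi}, with $z^{(i)}_0=y^{(i)}_0$. Writing $u^{(i)}_t := y^{(i)}_t - z^{(i)}_t$ and using $\nabla\Phi(\cdot)=P(\cdot-y^\ast)$, the drift increment of $|u^{(i)}_t|^2$ symmetrizes to
\begin{equation*}
-(u^{(i)}_t)^{\top}\bigl[C(Y_t)+C(\pi(t))\bigr]P\,u^{(i)}_t + (u^{(i)}_t)^{\top}\mathcal{R}^{(i)}_t,
\end{equation*}
while the quadratic variation contributes a Bures-type mismatch proportional to $\mathrm{tr}\bigl[(C(Y_t)^{1/2}-C(\pi(t))^{1/2})(C(Y_t)^{1/2}-C(\pi(t))^{1/2})^{\top}\bigr]$. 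The remainder $\mathcal{R}^{(i)}_t$ collects the ALDI correction $\tfrac{D+1}{B}(y^{(i)}_t-\bar{y}_t)$ and the covariance discrepancy $[C(\pi(t))-C(Y_t)]P(z^{(i)}_t-y^\ast)$. Averaging over $i$, the principal term produces a contraction at rate at least $\lambda_{\min}(P)\lambda_0(t)\geq 2$ by hypothesis \eqref{eq:ev_condition}, while $\mathcal{R}^{(i)}_t$ and the Bures mismatch are each bounded by $C_1\,\mathbb{E}\,\mathcal{W}_2^2(\hat{\mu}^B_t,\pi(t)) + C_2\,B^{-\kappa}$ via moment estimates and a second application of Fournier--Guillin. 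A Gr\"onwall argument yields $\mathbb{E}[\mathcal{W}_2^2(\hat{\mu}^B_t,\hat{\pi}^B_t)]\leq C\,B^{-\kappa}$ uniformly on $[0,T_\delta]$.

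Choosing $B_{T_\delta}$ large enough to push both $B$-dependent contributions below $\delta/3$ yields \eqref{eq:B-T-bound}. The main obstacle is the Bures-type quadratic-variation term: because ALDI in practice uses the non-symmetric square root \eqref{eq:nonsym_sqrt}, the mismatch takes the form $(C(Y_t)^{1/2}-C(\pi(t))^{1/2})(\cdot)^{\top}$ rather than the canonical $C(Y_t)+C(\pi(t))-2(C(Y_t)^{1/2}C(\pi(t))C(Y_t)^{1/2})^{1/2}$, and one must relate its trace to the sum-of-square-roots quantity $(\lambda_{\min}(C(Y_t))^{1/2}+\lambda_{\min}(C(\pi(t)))^{1/2})^2$ appearing in $\lambda_0(t)$. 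A secondary difficulty is the low-rank regime $B\leq D$, in which $C(Y_t)$ is singular; this forces $B_{T_\delta}\geq D+1$, consistent with ALDI's standing well-posedness assumption.
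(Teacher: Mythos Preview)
Your overall architecture---triangle inequality with the mean-field density as bridge, Fournier--Guillin for the i.i.d.\ sampling error, a synchronous coupling between the ALDI particles and i.i.d.\ solutions of \eqref{eq:EKS_mean_field_limit}, then Gr\"onwall---is exactly the paper's route. The substantive gap is in how you use the eigenvalue condition \eqref{eq:ev_condition}.

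The drift of $u^{(i)}_t = y^{(i)}_t - z^{(i)}_t$ does not symmetrize to $-(u)^{\top}[C(Y_t)+C(\pi(t))]P\,u$; it is $-C(Y_t)P\,u^{(i)}_t - [C(Y_t)-C(\pi(t))]P(z^{(i)}_t-y^\ast) + \tfrac{D+1}{B}(y^{(i)}_t-\bar y_t)$, so there is no contraction of $\mathbb{E}|u|^2$ at rate $\lambda_{\min}(P)\lambda_0(t)$. In the paper, $\lambda_0(t)$ enters only through the Ando--Hemmen inequality $\|\sqrt{A}-\sqrt{B}\|_F \leq \lambda_0^{-1/2}\|A-B\|_F$ applied to the diffusion mismatch. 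The key quantity is not $\mathbb{E}|u^{(1)}_t|^2$ but $\mathbb{E}|p^{(1)}_t|^2$ with $p^{(i)}_t := u^{(i)}_t - \bar u_t$; after It\^o's formula the drift contributes $-\lambda_{\min}(P)\,\mathbb{E}\|C(Y_t)-C(Z_t)\|_F^2$ and the diffusion contributes $+2\lambda_0^{-1}\,\mathbb{E}\|C(Y_t)-C(Z_t)\|_F^2$, so \eqref{eq:ev_condition} is the sign condition $\lambda_{\min}(P)\geq 2\lambda_0^{-1}$ that makes their sum non-positive. What survives is an inequality $\tfrac{d}{dt}\mathbb{E}|p^{(1)}_t|^2 \leq C\,\mathbb{E}|p^{(1)}_t|^2 + C\,B^{-1/2-\alpha/2+\epsilon}$ whose Gr\"onwall constant \emph{grows} in $t$; convergence comes solely from the $B$-factor. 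That factor is then sharpened by a bootstrap: starting from the crude $\mathbb{E}|u^{(1)}_t|^2\leq C$ (i.e.\ $\alpha=0$) one obtains $\alpha_1=\tfrac12-\epsilon$, and iterating $\alpha_{n+1}=\tfrac12+\tfrac12\alpha_n-\epsilon$ gives $\alpha_\infty=1-2\epsilon$, hence $\mathbb{E}|u^{(1)}_t|^2\leq C(T)B^{-1+2\epsilon}$. This bootstrap (Lemmas~\ref{lem:5.4}--\ref{lem:5.5} in the paper) is the technical heart of the particle-to-mean-field estimate and is missing from your sketch.
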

    
    \begin{proof}
    The proof can be found in Appendix~\ref{sec:proof_btconv}.
    \end{proof}
    
    \begin{remark}
    Since $\pi(t)$ converges to $\pi_{\ast}$ exponentially fast in 2-Wasserstein distance (see Theorem \ref{thm:carrillo} in the appendix), we can always find a $T > 0$ such that $\mathcal{W}_2(\pi(T),\pi_{\ast}) < \delta$.
    The second assumption \eqref{eq:ev_condition} on $T$ is a technical one and comes from an application of the Ando-Hemmen inequality (see the proof of Lemma 5.4 in \cite{DingLi21}).
    \end{remark}
\begin{remark}
The proof uses a triangle argument.
For $0\leq t \leq T$ this leads to a bound of the form
\begin{equation}
    \begin{aligned}\label{eq:B-T-bound-full}
    \mathbb{E}[\mathcal{W}_2(\hat{\mu}^{B}_{t},\mu_{\ast})] \leq c e^{-t}\mathcal{W}_2(\pi_0,\pi_{\ast}) + c(t,D,\epsilon) \begin{cases}
    B^{-1/2+\epsilon},& \quad D\leq 4,\\
    B^{-2/D},& \quad D > 4,
    \end{cases}
    \end{aligned}
\end{equation}
where $0<\epsilon<1/2$, $c>0$ is a constant depending only on the initial density $\pi_0$ and the posterior density $\pi_{\ast}$ and  $c(t,D,\epsilon)>0$ is a constant depending on $t, D, \epsilon$.
The first term comes from the Fokker-Planck solution and decays exponentially fast for $t\rightarrow T$.
The constant $c(t,D,\epsilon)$ for the remaining term however grows exponentially with $T$ and we hence obtain no monotonicity for $t\rightarrow T$.
Nevertheless, the bound \eqref{eq:B-T-bound-full} yields monotonicity in the following sense: if \eqref{eq:B-T-bound} is satisfied for some $T_{\delta}$, $B_{T_{\delta}}$, then for any $t \in [T_{\delta},T]$ we find a $B_t \geq B_{T_{\delta}}$ such that \eqref{eq:B-T-bound} is also satisfied for $t,B_t$.
Note that this already provides a theoretical motivation for an ensemble enrichment: to achieve the same error bound $\delta$ for times $t > T_{\delta}$, we may have to increase the batch size.
\end{remark}
    
\begin{corollary}[B-T-consistency of LIDL]
\label{cor:B-T-convergence_LIDL}
Under the same conditions as in Theorem \ref{thm:B-T-convergence_ALDI}, for every $\delta >0$ there exists a configuration $((t_\ell)_{\ell}, (b_\ell)_{\ell}, \{y_0^i\}, \mathcal{E} )$ of LIDL and a time $T_{\delta} > 0$ such that the solution $\hat{\mu}_t$ defined by \eqref{eq:lidl_measure} satisfies
\begin{equation}
    \mathbb{E}[\mathcal{W}_2(\hat{\mu}_{T_{\delta}},\mu_{\ast})] \leq \delta.
\end{equation}
\end{corollary}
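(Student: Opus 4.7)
The plan is to observe that the corollary is an immediate consequence of Theorem~\ref{thm:B-T-convergence_ALDI} combined with Remark~\ref{rem:aldi_is_lidl}(2), which identifies ALDI as a special case of LIDL. More precisely, given $\delta > 0$, Theorem~\ref{thm:B-T-convergence_ALDI} supplies a time $T_\delta > 0$ and a batch size $B_{T_\delta} \in \mathbb{N}$ for which the pure ALDI solution started from an i.i.d. $\pi_0$-sample of size $B_{T_\delta}$ satisfies $\mathbb{E}[\mathcal{W}_2(\hat{\mu}^{B_{T_\delta}}_{T_\delta}, \mu_\ast)] \leq \delta$.

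I would then exhibit an explicit LIDL configuration realizing this ALDI run. Concretely, choose $L = 0$, the time sequence $(t_0) = (0)$ (with the formal $t_{L+1} = t_1 = \infty$ as in Definition~\ref{def:lidl}), the batch sequence $(b_0) = (B_{T_\delta})$, the initial batch $\{y_0^i\}_{i=1}^{B_{T_\delta}}$ drawn i.i.d.\ from $\pi_0$, and any enrichment strategy $\mathcal{E}$ (the choice is irrelevant since Step~2 in Definition~\ref{def:lidl} is never executed for $L = 0$). For this configuration, Step~1 of LIDL reduces to solving the ALDI SDE \eqref{eq:aldi} with initial condition $\{y_0^i\}_{i=1}^{B_{T_\delta}}$ for all $t \geq 0$, and by \eqref{eq:lidl_measure} the resulting family $(\hat{\mu}_t)_{t\geq 0}$ coincides with the ALDI ensemble distribution used in Theorem~\ref{thm:B-T-convergence_ALDI}.

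Applying Theorem~\ref{thm:B-T-convergence_ALDI} to this particular LIDL run then yields
\begin{equation*}
    \mathbb{E}[\mathcal{W}_2(\hat{\mu}_{T_\delta}, \mu_\ast)] = \mathbb{E}[\mathcal{W}_2(\hat{\mu}^{B_{T_\delta}}_{T_\delta}, \mu_\ast)] \leq \delta,
\end{equation*}
which is the desired consistency bound. There is no genuine obstacle here; the only subtlety is purely notational, namely checking that the definition of a LIDL run specializes cleanly to a single ALDI trajectory when $L = 0$, so that the hypotheses of Theorem~\ref{thm:B-T-convergence_ALDI} (linear forward map, Gaussian prior, $\mathcal{C}^2$ initial density $\pi_0$ with bounded higher moments, and the eigenvalue condition \eqref{eq:ev_condition}) transfer verbatim to the chosen LIDL instance.
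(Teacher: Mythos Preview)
Your proposal is correct and follows exactly the paper's own argument: choose $L=0$ and $b_0 = B_{T_\delta}$ so that LIDL degenerates to ALDI, then invoke Theorem~\ref{thm:B-T-convergence_ALDI}. The paper's proof is a one-liner to this effect, and your version simply spells out the same reduction in more detail.
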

\begin{proof}
Choosing $L=0$ and $T_{\delta}, b_0=B_{T_{\delta}}$ as in Theorem \ref{thm:B-T-convergence_ALDI} yields one such configuration.
\end{proof}

A full ALDI run is not the only configuration with B-T-consistency.
In particular, we require the last sample batch $\{y^i_{L}\}$ to satisfy the conditions of Theorem \ref{thm:B-T-convergence_ALDI}.
One of the conditions is that $\{y^i_{L}\}$ is drawn i.i.d. from some $\mathcal{C}^2$-density with bounded higher moments.
This condition comes from the smoothness required of a strong solution of the Fokker-Planck equation.
This can be formulated as an assumption on the enrichment scheme.

\begin{definition}[Consistent Enrichment]
\label{def:consistent_enrichment}
We call an enrichment strategy $\mathcal{E}$ \textrm{consistent} if it creates enriched ensembles drawn i.i.d. from some $\mathcal{C}^2$ density with finite higher moments.
\end{definition}
As the following remark shows, this is a purely theoretical condition.
In practice, any enrichment scheme can be seen as an arbitrarily close approximation of a consistent one.

\begin{remark}
\label{rem:mollifying} 
The subset of measures in $\mathcal{D}_2(\mathbb{R}^D)$ with $\mathcal{C}^2(\mathbb{R}^D)$--Lebesque density is dense in $\mathcal{W}_2(\mathbb{R}^D)$, see Lemma \ref{lemma:dense_F_p_k_W_p} for $p=2$ and $k=2$.
Now let $\mathcal{E}$ be an arbitrary ensemble enrichment scheme taking the original batch $\{y_1^i\}_{i=1}^{b_1}$ and generating an enriched batch $\{y_2^i\}_{i=1}^{b_2}$ for $b_2 > b_1$ with ensemble distribution $ \hat{\mu}$.
Then, for any $\epsilon>0$ there exists a regular measure $\mu_\epsilon\in\mathcal{W}_2(\mathbb{R}^D)$ with Lebesque density in $\mathcal{C}^2(\mathbb{R}^D)$ such that $\mathcal{W}_2(\hat{\mu},\mu_\epsilon)<\epsilon$.
Hence, the enriched ensemble approximately can be seen as being drawn i.i.d. from a distribution with $C^2$ density.
\end{remark}

\vspace{1em}

\noindent
With this notion of consistency in place, the following result immediately follows.

\begin{corollary}[B-T-consistency of LIDL]
\label{cor:B-T-convergence_consistent}
Assume the conditions of Theorem \ref{thm:B-T-convergence_ALDI} to be satisfied.
Let $\mathcal{E}$ be a consistent ensemble enrichment scheme in the sense of Definition \ref{def:consistent_enrichment}.
Then, for every configuration $((t_{\ell}, (b_\ell)_{\ell}, \{y_0^i\}, \mathcal{E} )$ of LIDL with $b_0,\ldots,b_L$ sufficiently large and $\delta > 0$, there exists a time $T_{\delta} > 0$ such that the solution $\hat{\mu}_t$ defined by \eqref{eq:lidl_measure} satisfies
\begin{equation}\label{eq:B-T-convergence_consistent}
    \mathbb{E}[\mathcal{W}_2(\hat{\mu}_{T_{\delta}},\mu_{\ast})] \leq \delta.
\end{equation}
\end{corollary}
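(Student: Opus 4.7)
The plan is to reduce the proof to a single application of Theorem~\ref{thm:B-T-convergence_ALDI} at the final stage of the LIDL process, exploiting the fact that a LIDL run after the last enrichment is, by definition, exactly an ALDI run. The consistency of $\mathcal{E}$ is precisely what allows the hypotheses of Theorem~\ref{thm:B-T-convergence_ALDI} to be inherited at the restart time $t_L$.

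Concretely, I would first fix a configuration $((t_\ell)_\ell, (b_\ell)_\ell, \{y_0^i\}, \mathcal{E})$ and $\delta>0$. By Definition~\ref{def:lidl}, for $t \in [t_L, \infty)$ the family $\hat{\mu}_t$ coincides with the ensemble distribution of an ALDI run started from the enriched batch $\{y_L^i\}_{i=1}^{\bar{b}_L}$, where $\bar{b}_L = \sum_{\ell=0}^L b_\ell$. Definition~\ref{def:consistent_enrichment} then guarantees that $\{y_L^i\}$ may be viewed as an i.i.d.~sample from some density $\pi_L \in \mathcal{C}^2$ with bounded higher moments. Next I would invoke Theorem~\ref{thm:carrillo} (exponential Wasserstein convergence of the mean-field Fokker--Planck flow) to pick $T'>0$ such that the solution $\pi(\tau)$ of \eqref{eq:eks_fokker_planck} with initial datum $\pi_L$ satisfies $\mathcal{W}_2(\pi(T'),\pi_\ast) < \delta$. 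Applying Theorem~\ref{thm:B-T-convergence_ALDI} to this restarted process furnishes a time $T_\delta' \leq T'$ and a threshold $B_{T_\delta'} \in \mathbb{N}$ such that the restarted ensemble distribution at time $T_\delta'$ (relative to $t_L$) is within $\delta$ of $\mu_\ast$ in expected $\mathcal{W}_2$--distance, provided $\bar{b}_L \geq B_{T_\delta'}$. Choosing $b_L$ large enough to enforce $\bar{b}_L \geq B_{T_\delta'}$ and setting $T_\delta := t_L + T_\delta'$ then yields \eqref{eq:B-T-convergence_consistent} by the identification $\hat{\mu}_{T_\delta} = \hat{\mu}^{\bar{b}_L}_{T_\delta'}$ from \eqref{eq:lidl_measure}.

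The main obstacle I anticipate is the eigenvalue condition \eqref{eq:ev_condition}: it constrains both the posterior precision $P$ and the empirical/mean-field covariances along the restart trajectory driven by $\pi_L$. Since $\pi_L$ is produced by $\mathcal{E}$ and is not a free parameter, one cannot tune it to satisfy \eqref{eq:ev_condition} directly. The natural way out is to read the phrase ``Assume the conditions of Theorem~\ref{thm:B-T-convergence_ALDI} to be satisfied'' as including \eqref{eq:ev_condition} for the restart segment, or to observe that the condition concerns a lower bound on covariance eigenvalues which is generic for densities with full-dimensional support and sufficient spread, both of which follow from $\pi_L \in \mathcal{C}^2$ with bounded moments together with the smoothing effect of the ALDI diffusion over the time window of length $T'$. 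Everything else in the argument, including the role of $t_0,\ldots,t_{L-1}$ and the earlier enrichment rounds, is absorbed into the preparation of the initial datum $\pi_L$ and plays no further analytical role; the earlier stages only affect \emph{which} $\pi_L$ arises, not the validity of the final bound. Thus the corollary is essentially a bookkeeping consequence of Theorem~\ref{thm:B-T-convergence_ALDI} once Definition~\ref{def:consistent_enrichment} is in place.
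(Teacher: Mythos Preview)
Your proposal is correct and follows essentially the same approach as the paper: reduce to the last stage, use consistency of $\mathcal{E}$ to guarantee the i.i.d.\ $\mathcal{C}^2$ initial condition at $t_L$, then apply Theorem~\ref{thm:B-T-convergence_ALDI} to the restarted ALDI run with $\bar b_L$ taken large enough. Your discussion is in fact more explicit than the paper's, which simply absorbs the eigenvalue condition~\eqref{eq:ev_condition} and the existence of $T'$ into the phrase ``the conditions of Theorem~\ref{thm:B-T-convergence_ALDI} apply to the starting ensemble $\{y_L^i\}$ by assumption''; your remark that \eqref{eq:ev_condition} for the restart segment must be read as part of that standing assumption is exactly how the paper treats it.
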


\begin{proof}
Since $\mathcal{E}$ is consistent, the last batch $\{y_L^i\}$ is drawn i.i.d. from a $\mathcal{C}^2$-density with bounded higher moments.
Moreover, the conditions of Theorem \ref{thm:B-T-convergence_ALDI} apply to the starting ensemble $\{y_L^i\}$ by assumption.
Hence, there exists a $T_{\delta}$ and $B_{T_\delta}$ 
such that \eqref{eq:B-T-convergence_consistent} holds true,
provided $\overline{b}_L=b_0+\ldots +b_L\geq B_\delta$.
\end{proof}

\subsection{Convergence of homotopy approach}
\label{sec:homotopy}
Motivated by the particular homotopy approach from Section \ref{sec:setting}, define \textit{switch points} $0=s_0<s_1<\ldots < s_K =1$ for some $K\in\mathbb{N}$.
Let $L_\mathrm{log}^1\left(\mathbb{R}^{D}\right) := \left\{ \phi\colon\mathbb{R}^{D}\to \mathbb{R} : \exp(-\phi)\in L^1(\mathbb{R}^{D}) \right\}$.
In what follows, the constant $Z_{s}^{-1}$ is a generic $L^1$ normalization constant enumerated by $s\in[0,1]$ and associated with a measure $\mu_s$.
First, we define a class of feasible homotopy functions, which we call $\mathcal{W}_2$-\textit{stable homotopies}.

\begin{definition}[$\mathcal{W}_2$-stable homotopy]\label{def:stable_homotopy}
Let $\Phi_i\colon\mathbb{R}^{D}\to\mathbb{R}$ such that $\mu_i:=Z_{i}^{-1}\exp(-\Phi_i)\in\mathcal{W}_2(\mathbb{R}^{D})$ for $i=0,1$.
A mapping $\mathcal{H}\colon [0,1]\to L_\mathrm{log}^1\left(\mathbb{R}^{D}\right)$ is denoted a $\mathcal{W}_2$-\textit{stable homotopy} between $\Phi_0$ and $\Phi_1$ if
\begin{enumerate}[label=(\roman*)]
    \item $\mathcal{H}(0) = \Phi_0,\quad \mathcal{H}(1) = \Phi_1, $\hfill (interpolation)
    \item $\mu_s:=Z_s^{-1}\exp(-\mathcal{H}(s))\in \mathcal{W}_2\left(\mathbb{R}^{D}\right), \quad s\in(0,1)$,\hfill (consistency)
    \item $    \mathcal{W}_2(\mu_{s_1}, \mu_{s_2}) \leq \phi(|s_1-s_2|)$, \quad $s_1,s_2\in[0,1],$ 
    \hfill (stability)
\end{enumerate}
for continuous $\phi\colon[0,1]\to\mathbb{R}$ with $\phi(s)\to 0 $ as $s\searrow 0$. 
\end{definition}

With this preparation, let $\mathcal{H}$ be a $\mathcal{W}_2$-stable homotopy between some auxillary potential $\mathcal{H}(0) = \Psi$ and the posterior potential $\mathcal{H}(1) = \Phi$. In order to still be able to work with It\^o diffusion processes as in \eqref{eq:general_SDE}, we aim to define an inhomogeneous drift term $f_{\mathcal{H}} = f_\mathcal{H}(t, \cdot)$ that is piecewise constant in $t$. For this, we define a time horizon partition $\mathcal{T}\colon\{0,\ldots,K\}\to \mathbb{R}_+$ with
\begin{equation}
    \mathcal{T}(0) = 0,\quad \mathcal{T}(k)<\mathcal{T}(k+1), \quad k=0,\ldots, K-1.
\end{equation}

Then, for $Y=\mathrm{vec}\left(y^{(i)}\right)_i\in\mathbb{R}^{DB}$ with $B\in\mathbb{N}$ and $\mathcal{T}(K+1)=\infty$, let
\begin{equation}
    \label{eq:pcdrift}
    f_\mathcal{H}(t, Y) := 
    f( \mathcal{H}(s_k), Y), \quad t \in [\mathcal{T}(k),\mathcal{T}(k+1)), \quad k = 0,\ldots, {K}.
\end{equation}
Then, for for $k=0,\ldots, K$ consider the time-partitioned dynamics
\begin{equation}
    \label{eq:auxillary_propagator}
    \mathrm{d}y_t^{(i)} = f(\mathcal{H}(s_k), Y_t)\mathrm{d}t + \Gamma(Y_t)\mathrm{d}W_t^{(i)}, \qquad t \in [\mathcal{T}(k),\mathcal{T}(k+1)),
\end{equation}
yielding the full process
\begin{equation}
    \label{eq:full_propagator}
    \mathrm{d}y_t^{(i)} = b_{\mathcal{H}}(t, Y_t)\mathrm{d}t + \Gamma(Y_t)\mathrm{d}W_t^{(i)},\qquad t\geq 0.
\end{equation}
For the intended application, the drift term $f_{\mathcal{H}}$ corresponds to the drift term in \eqref{eq:aldi}, replacing $\Phi$ with $\mathcal{H}(s_k)$, i.e. 
$$
f(\mathcal{H}(s_k),Y_t) = - C(Y_t)\nabla_{y_t^{(i)}} 
\mathcal{H}(s_k)\left( y_t^{(i)}\right) +  \frac{D+1}{B} \left(y^{(i)}_t-\overline{y}_t\right).
$$
The partitioning of the dynamics up to time $\mathcal{T}(K)$ via \eqref{eq:auxillary_propagator} can be seen as a preconditioner with auxiliary potentials prior to starting the ALDI run with the posterior potential $\Phi$ at $t=\mathcal{T}(K)$.


\begin{assumption}[Local convergence]
\label{ass:localconvergence} 
Let $\delta\colon[0,1]\to[\underline{\delta},\infty]$ for $\underline{\delta}>0$ be a parameter dependent convergence radius.
For $s\in[0,1]$, let $\hat{\mu}_{0,s}$ be an arbitrary random probability measure with 
\begin{equation}
    \label{eq:convergenceradius}
    \mathbb{E}[\mathcal{W}_2(\mu_s, \hat{\mu}_{0,s})]<\delta(s).
\end{equation}
Then, the propagator SDE \eqref{eq:auxillary_propagator} implies expected convergence to $\mu_s$ in the following sense:

For $0<\epsilon_s < \delta(s)$ there exists $T_s := T_s(\epsilon_s)>0$ and $B_s=B_s(\epsilon_s)>0$ such that for all $B\geq B_s$,
\begin{equation}
    \mathbb{E}\left[\mathcal{W}_2(\hat{\mu}_{T_s,s}^{B}, \mu_s)\right] \leq \epsilon_s.
\end{equation}
Here, $\hat{\mu}_{t,s}^{B}$ denotes the random ensemble distribution associated to $(y_t^{(i)})_{i=1}^B$ as the solution of \eqref{eq:auxillary_propagator} with $y_0^{(i)}\sim \hat{\mu}_{0,s}$.
\end{assumption}

Note that Assumption \ref{ass:localconvergence} refers to local convergence in terms of the starting distribution only.

\begin{theorem}[Convergence of homotopy approach]
\label{thm:conv_homotopy}
Let $\mathcal{H}$ be a $\mathcal{W}_2$ stable homotopy between $\mathcal{H}(0) = \Psi\in L_{\text{log}}^1\left(\mathbb{R}^{D}\right)$ and the posterior potential $\mathcal{H}(1) = \Phi$ and let Assumption \ref{ass:localconvergence} be satisfied.
Let $\epsilon >0$.
Then there exists $K\in\mathbb{N}$, switch points $s_0,\ldots,s_K$ and a corresponding horizon partition $\mathcal{T}$, $T_\epsilon>0$ and $B_\epsilon>0$ such that 
$$
\mathbb{E}\left[\mathcal{W}_2(\hat{\mu}_{T_\epsilon}^{B_\epsilon}, \mu_\ast)\right] < \epsilon,
$$
where $\hat{\mu}_t^B$ denotes the ensemble distribution of $(y_t^{(i)})_{i=1}^B$.
\end{theorem}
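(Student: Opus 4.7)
The proof is a telescoping argument along the $K$ stages of the piecewise-constant homotopy dynamics. The plan is to choose the switch points close enough that the homotopy stability modulus $\phi(|s_{k+1}-s_k|)$ fits inside the local convergence radius at $s_{k+1}$; then at each stage, Assumption~\ref{ass:localconvergence} will contract the error back down before we advance the potential again. This way the ensemble can be "handed off" from one intermediate target $\mu_{s_k}$ to the next without ever leaving the regime in which local convergence applies.

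More concretely, I would first pick a target error budget. Given $\epsilon>0$, choose $K\in\mathbb{N}$ and uniform switch points $s_k := k/K$ for $k=0,\ldots,K$ large enough that $\phi(1/K) \leq \tfrac{1}{4}\underline{\delta}$ (using continuity of $\phi$ with $\phi(0^+)=0$). Set a decreasing sequence of stage errors $\epsilon_k$ satisfying $\epsilon_k + \phi(1/K) \leq \delta(s_{k+1})$ for every $k$ (feasible since $\delta(s)\geq\underline{\delta}$), together with $\epsilon_K \leq \epsilon$. For the initial ensemble, draw $y_0^{(i)}$ i.i.d.\ from $\mu_0=\mu_\Psi$, so that $\mathbb{E}[\mathcal{W}_2(\hat\mu_{0,s_0},\mu_{s_0})]=0<\delta(s_0)$.

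The core step is the induction on $k$. Assume at time $\mathcal{T}(k)$ we have
\begin{equation*}
    \mathbb{E}\bigl[\mathcal{W}_2(\hat\mu_{\mathcal{T}(k)}^{B}, \mu_{s_{k-1}})\bigr] \leq \epsilon_{k-1}.
\end{equation*}
Then by the triangle inequality for $\mathcal{W}_2$ and the stability condition (iii) of Definition~\ref{def:stable_homotopy},
\begin{equation*}
    \mathbb{E}\bigl[\mathcal{W}_2(\hat\mu_{\mathcal{T}(k)}^{B}, \mu_{s_{k}})\bigr] \leq \epsilon_{k-1} + \phi(|s_k-s_{k-1}|) \leq \delta(s_k).
\end{equation*}
Hence $\hat\mu_{\mathcal{T}(k)}^B$ lies inside the convergence radius of $\mu_{s_k}$, and we may apply Assumption~\ref{ass:localconvergence} to the time-homogeneous SDE~\eqref{eq:auxillary_propagator} driven by $\mathcal{H}(s_k)$ on the interval $[\mathcal{T}(k),\mathcal{T}(k+1))$. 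This yields a waiting time $T_{s_k}=T_{s_k}(\epsilon_k)$ and a batch threshold $B_{s_k}(\epsilon_k)$ so that, setting $\mathcal{T}(k+1):=\mathcal{T}(k)+T_{s_k}$ and requiring $B\geq B_{s_k}$,
\begin{equation*}
    \mathbb{E}\bigl[\mathcal{W}_2(\hat\mu_{\mathcal{T}(k+1)}^{B}, \mu_{s_k})\bigr] \leq \epsilon_k,
\end{equation*}
closing the induction. After $K$ iterations we arrive at $s_K=1$, $\mu_{s_K}=\mu_\ast$, and the claim follows with $T_\epsilon := \mathcal{T}(K+1)$ and $B_\epsilon := \max_{0\leq k\leq K} B_{s_k}$, which is finite since $K$ is fixed.

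The main obstacle is the quantitative calibration between the homotopy modulus $\phi$ (which controls how far consecutive targets can move) and the local convergence radius $\delta(\cdot)$ (which controls how far from the target the ensemble is allowed to start). The argument above crucially exploits the uniform lower bound $\delta\geq\underline{\delta}>0$ together with $\phi(s)\to 0$ to guarantee that a finite $K$ suffices, and it uses $\mathcal{W}_2$ as a genuine metric so that triangle inequalities propagate expectation bounds. A minor technical point is that \eqref{eq:convergenceradius} is formulated for arbitrary random probability measures, not only i.i.d.\ empirical ones; since $\hat\mu^B_{\mathcal{T}(k)}$ inherits the (non-i.i.d.) dependence structure produced by the interacting particle dynamics of the previous stage, we rely on this full generality of Assumption~\ref{ass:localconvergence} when invoking it at each $k\geq 1$.
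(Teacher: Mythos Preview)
Your argument is essentially the paper's own proof: both proceed by a telescoping hand-off along the homotopy, using the stability modulus $\phi$ together with the uniform lower bound $\underline\delta$ on the convergence radius to fit successive targets inside one another's basins, and invoking Assumption~\ref{ass:localconvergence} at each stage. The only cosmetic difference is that you fix a uniform grid $s_k=k/K$ a~priori, whereas the paper chooses the $s_k$ greedily (maximal jump with $\phi(s_{k+1}-s_k)+\epsilon_k<\delta(s_{k+1})$) and then argues termination; both yield a finite $K$ for the same reason.

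One slip to fix: drawing $y_0^{(i)}$ i.i.d.\ from $\mu_0$ does \emph{not} give $\mathbb{E}[\mathcal{W}_2(\hat\mu_{0,s_0},\mu_{s_0})]=0$; for finite $B$ this expectation is strictly positive. You should either assume, as the paper does, an initial ensemble with $\mathbb{E}[\mathcal{W}_2(\mu_0,\hat\mu_{0,0})]<\delta(0)$, or note that the empirical-measure rate makes this bound hold for $B$ sufficiently large and absorb that threshold into $B_\epsilon$.
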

\begin{proof}
For $B\in\mathbb{N}$ let $\hat{\mu}^B_{t,s}$ denote the $B$-particle ensemble distribution at time $t$ using the propagation \eqref{eq:general_SDE}
with 
$$-A(Y_t)\nabla_{y_t^{(i)}}\mathcal{V}(Y_t) = -C(Y_t)\nabla_{y_t^{(i)}}\mathcal{H}(s)\left(y_t^{(i)}\right) + \frac{D+1}{B}\left(y_t^{(i)}-\overline{y}_t\right)$$
with initial condition $\hat{\mu}_{0,s}^B$. Since $\mathcal{H}$ is $\mathcal{W}_2-$stable, it follows from the stability assumption in Definition \ref{def:stable_homotopy} that for all $s\in[0,1]$
\begin{equation*}
    \mathcal{W}_{2}(\mu_{0}, \mu_s) \leq \phi(s).
\end{equation*}
Since $\phi$ is continuous and $\delta(s)$ is uniformly bounded from below by $\underline{\delta}$, for arbitrary but fixed $0<\epsilon_0<\underline{\delta}$ there is a maximal $s_1\in(0,1]$ such that
$$
 \phi(s_1) + \epsilon_0 < \delta(s_1).
$$
Now let $\hat{\mu}_{0,0}$ be an initial ensemble distribution with $\mathbb{E}[\mathcal{W}_2(\mu_0, \hat{\mu}_{0,0})] < \delta(0)$. 
Then by the triangle inequality and using Assumption \ref{ass:localconvergence} we find $T_0=T_0(\epsilon_0)$, $B_0=B_0(\epsilon_0)$ such that
\begin{equation*}
\mathbb{E}\left[ \mathcal{W}_2(\mu_{s_1}, \hat{\mu}_{T_0,0}^{B_0}) \right]
\leq 
\mathcal{W}_2(\mu_{s_1},\mu_{s_0}) +
\mathbb{E}\left[ \mathcal{W}_2(\mu_{s_0}, \hat{\mu}_{T_0,s_0}^{B_0}) \right] 
\leq 
\phi(s_1) + \epsilon_0
\leq \delta(s_1)
\end{equation*}
with initial ensemble distribution $\hat{\mu}_{0,0}^{B_0}$ with particles drawn from $\hat{\mu}_{0,0}$.
Hence, $\hat{\mu}_{T_0,0}^{B_0}$ is a random measure that is within the convergence radius of the expected $2$-Wasserstein distance to $\mu_{s_1}$ and the setting of Assumption \ref{ass:localconvergence} holds again. We then define the first part of the horizon partition as $$\mathcal{T}(1) = T_0.$$

With similar arguments for arbitrary but fixed $\epsilon_k<\underline{\delta}$, we define maximum $s_{k+1}$ with $s_{k}\leq s_{k+1} \leq 1$ inductively for $k = 1,\ldots$, satisfying
$$
  \phi(s_{k+1}-s_{k}) + \epsilon_k < \delta(s_{k+1}).
$$
Then, the for the chosen $\epsilon_k$ we again find a time horizon $T_{k}$ and sample size $B_{k}$ such that 
\begin{equation*}
\mathbb{E}\left[ \mathcal{W}_2(\mu_{s_{k+1}}, \hat{\mu}_{T_{k},s_k}^{B_{k}}) \right]
\leq \delta(s_{k+1}).
\end{equation*}
This yields the time horizon update
$$
 \mathcal{T}(k+1) := \mathcal{T}(k) + T_{k}.
$$
Since $\underline{\delta}>0$ and $\phi$ is continuous, this procedure stops in finite time.
Concretely, there exists $K\in\mathbb{N}$ with $s_K = 1$.
\end{proof}

\begin{remark}
Provided that Theorem \ref{thm:B-T-convergence_ALDI} holds true for $\mathcal{H}(s)$ instead of $\phi$ and $\exp(-\mathcal{H}(s))$ is proportional to a Gaussian for some $s\in[0,1]$, we can then set $\delta(s)=\infty$.
This becomes especially relevant when choosing $\Psi=\mathcal{H}(0)$ such that $\exp(-\Psi)$ is proportional to a Gaussian approximation of $\mu_\ast$ in the non-linear setup.
\end{remark}

\begin{remark}
\label{remark:homotopy_design}
The design of the piecewise constant drift term is motivated by the numerical realisation of the homotopy approach. 
In a more general framework, one may want to define an inhomogenous drift term $f(t,Y)$ using some proper time scaling between the SDE time $t$ and the homotopy switch design $s$ of the form $s = s(t)$.
This would lead to a particle system of the form
\begin{equation*}
    \mathrm{d}y_t^{(i)} = f(\mathcal{H}(s(t)), Y_t)\mathrm{d}t + \Gamma(Y_t)\mathrm{d}W_t^{(i)}.
\end{equation*}
\end{remark}
The analysis of the requirements on general $s(t)$ as in remark \ref{remark:homotopy_design} are out of the scope of this work, but we discuss several designs in the following numerics section.

\section{Numerical examples}
\label{sec:numerics}

This section is devoted to the numerical investigation of the ideas presented in this manuscript.
Since our error analysis is carried out in the $2$-Wasserstein metric, numerical errors are discussed to some extend in this error discrepancy.
In particular, since we are concerned with the setup of ensembles, only discrete measures are examined.
It is well-known that the computation of the $2$-Wasserstein distance of two discrete measures is equivalent to a constrained assignment problem.
Its computation can easily become cumbersome with a growing number of samples. Consequently, we perform the numerical error analysis in terms of an approximation of the Wasserstein error, namely the debiased Sinkhorn metric $\mathcal{S}_\epsilon$ introduced in Section \ref{sec:generalized_transport}.
For the approximation error introduced by the Sinkhorn metric in terms of the regularizing parameter $\epsilon>0$ we refer to \cite{genevay2019sample}.
Throughout this section, we choose $\epsilon=0.1$.
The computation of the debiased Sinkhorn metric is realised with the python package
\texttt{GeomLoss} by Jean Feydy~\cite{feydy2019interpolating}.

\vspace{2ex}
\paragraph{Random error variables}
Since we want to track the convergence in the case of finite batch size $\overline{b}$ as reliably as possible, we define the following random error variables
\begin{align}
    \label{eq:EP_t}
    &&\mathrm{EP_t}\colon \omega &\mapsto \mathcal{S}_\epsilon(\hat{\mu}_t,\mu^{(\overline{b})}_{\ast}), &(t\textit{-ensemble-posterior}) &\\
    \label{eq:PP}
    &&\mathrm{PP}\colon\omega &\mapsto \mathcal{S}_{\epsilon}(\mu^{(\overline{b})}_{\ast},\tilde{\mu}^{(\overline{b})}_{\ast}).& (\textit{posterior-posterior})&
\end{align}
Here, the ensemble distribution $\hat{\mu}_t$ is defined by \eqref{eq:lidl_measure} and 
$\mu^{(\overline{b})}_{\ast}, \tilde{\mu}^{(\overline{b})}_{\ast}$ are independent empirical measures of $\overline{b}$ samples drawn from the true posterior distribution, respectively.
As in the previous sections, $\overline{b}$ corresponds to the total number of posterior samples that are produced with our method.
For the sake of readability, we suppress the dependence of $\mathrm{EP}_t$ on the particular used batch sizes for $\hat{\mu}_t$ and the total batch size $\overline{b}$ as this becomes clear from the context.
Furthermore, since we are dealing with particular instances of ensemble trajectories, the corresponding ensemble distributions are random measures and thus it is natural to investigate the expected Wasserstein (Sinkhorn) error
\begin{equation}
    \begin{aligned}\label{eq:num_metric}
    \mathbb{E}[\mathcal{W}_2(\hat{\mu}_t,\mu^{(\overline{b})}_{\ast})] \approx \mathbb{E}[\mathcal{S}_\epsilon(\hat{\mu}_t,\mu^{(\overline{b})}_{\ast})] = \mathbb{E}[\mathrm{EP}_t].
    \end{aligned}
\end{equation}

Under the assumption that \eqref{eq:ev_condition} is satisfied for sufficiently large $T$, this expectation can be controlled for suitable configurations of ALDI/LIDL since by Theorem \ref{thm:conv_emp_measure} and Corollary \ref{cor:B-T-convergence_consistent} for any $\delta > 0$ there exist a time $T_{\delta}$ and a batch size $\overline{b}_{T_\delta}$ such that
\begin{equation}\label{eq:wasserstein_bound}
    \mathbb{E}[\mathcal{W}_2(\hat{\mu}_{t},\mu^{(\overline{b}_{T_{\delta}})}_{\ast})] \leq \mathbb{E}[\mathcal{W}_2(\hat{\mu}_t,\mu_{\ast})] + \mathbb{E}[\mathcal{W}_2(\mu_{\ast},\mu^{(\overline{b}_{T_{\delta}})}_{\ast})] \leq \delta.
\end{equation}
However, for fixed batch sizes $\overline{b}$, \eqref{eq:num_metric} cannot be expected to be close to $0$ for any $t>0$ unless $\overline{b}$ tends to infinity.
In order to still track some type of convergence based on \eqref{eq:num_metric} for a finite number of particles, we consider the so-called expected \textit{posterior-posterior} error for finite batch size given by
 \begin{equation}\label{eq:num_benchmark}
    \mathbb{E}[\mathcal{W}_2(\mu^{(\overline{b})}_{\ast},\tilde{\mu}^{(\overline{b})})] \approx \mathbb{E}[\mathcal{S}_{\epsilon}(\mu^{(\overline{b})}_{\ast},\tilde{\mu}^{(\overline{b})}_{\ast})] = \mathbb{E}[\mathrm{PP}].
\end{equation}
The random variable $\mathrm{PP}$ and its expectation $\mathbb{E}[\mathrm{PP}]$ are constant in $t$.
Moreover, since $\mathrm{PP}$ is defined upon different realisations of finite posterior samples it is a non-negative random variable. 
However, its expectation $\mathbb{E}[\mathrm{PP}]$ goes to 0 in the limit $\overline{b}\rightarrow \infty$.
In the numerical error analysis we then examine the convergence 
\begin{align}
    \mathrm{EP}_t &\stackrel{d}{\longrightarrow} \mathrm{PP},\label{eq:num_conv_dist}\\
    \mathbb{E}[\mathrm{EP}_t] 
    &\longrightarrow \mathbb{E}[\mathrm{PP}],\label{eq:num_conv_ep}
\end{align}
as $t\to \infty$.
We remark that the investigation of the expected $2$-Wasserstein error is also motivated by the so-called \textit{concentration around expectation} property.
In particular, let $\mu\in\mathcal{W}_2(\mathbb{R}^{D})$ and $\mu^{(B)}$ be an empirical measure determined by $B$ i.i.d. samples drawn from $\mu$.
Then, McDiarmid's inequality yields~\cite{weed2019sharp}
$$
\mathbb{P}\left( \mathcal{W}_2^2(\mu,\mu^{(B)}) \geq 
\mathbb{E}[\mathcal{W}_2^2(\mu,\mu^{(B)})] + t\right) \leq \exp\left(-2B t^2\right).
$$

\vspace{1ex}
\paragraph{Double Sinkhorn error}
As discussed above, the aforementioned quantities do not converge to $0$ for a finite batch size.
The random variables $\mathrm{EP}_t$ and $\mathrm{PP}$ are non-negative scalar-valued.
Due to the ergodic property of the ALDI propagation~\cite{garbuno2020affine}, we expect that $\mathrm{EP}_t$ converges to $\mathrm{PP}$ in distribution \eqref{eq:num_conv_dist}.
Let $\mu_{\mathrm{EP}_t}$ and $\mu_{\mathrm{PP}}$ be the distribution of $\mathrm{EP}_t$ and $\mathrm{PP}$, respectively.
Then, we define the so-called \textit{double Sinkhorn error} as the mapping
\begin{equation}
    \label{eq:cont_doublesinkhorn}
    t\to \mathcal{S}_{\epsilon}( \mu_{\mathrm{EP}_t} , \mu_{\mathrm{PP}} ).
\end{equation}
Note that $\mathcal{S}_{\epsilon}( \mu_{\mathrm{EP}_t} , \mu_{\mathrm{PP}} ) \to 0$ implies \eqref{eq:num_conv_dist}.
The numerical realisation of \eqref{eq:cont_doublesinkhorn} is examined as follows.
For a \textit{number of runs} $R\in\mathbb{N}$, let $(\hat{\mu}^{r}_t)_{t}$, $r=1,\ldots,R$, be the family of measures generated by the $r$-th ALDI/LIDL run.
Moreover, let
$$
\mu_{\ast}^{r,\overline{b}}, \tilde{\mu}_{\ast}^{r,\overline{b}}, \tilde{\tilde{\mu}}_{\ast}^{r,\overline{b}}
$$
be independent random empirical measures of $\overline{b}$ particles sampled from the posterior for every $r$.
Then, the sets
\begin{equation}
    \label{eq:double_sinkhorn_estimate}
    \{\mathcal{S}_{\epsilon}(\hat{\mu}^{r}_t,\mu_{\ast}^{r,\overline{b}})\}_{r=1}^R,\quad
\{\mathcal{S}_{\epsilon}(\tilde{\mu}_{\ast}^{r,\overline{b}}, \tilde{\tilde{\mu}}_{\ast}^{r,\overline{b}})\}_{r=1}^R
\end{equation}
are interpreted in this instance as an empirical measure approximating $\mu_{\mathrm{EP}_t}$ and $\mu_{\mathrm{PP}}$, respectively.
Finally, these empirical measures are used to approximate \eqref{eq:cont_doublesinkhorn} with error converging to zero as $t,r\to \infty$.

\vspace{1ex}
\paragraph{Expected trajectory difference and slope}

To adaptively choose the enrichment times 
$t_{\ell}$, we are interested in tracking the convergence speed of $\mathbb{E}[\mathrm{EP}_t]$ to $\mathbb{E}[\mathrm{PP}]$.
In the case of convergence, for any $k\in\mathbb{N}$, we have
\begin{equation}\label{eq:diff}
    \operatorname{DIFF}(t)\coloneqq 
\mathbb{E}[\mathrm{EP}_t]-\mathbb{E}[\mathrm{EP}_{t- k\Delta t}] \longrightarrow 0,
\end{equation}
which we will use for a first heuristic for adaptivity. Towards a second heuristic,
suppose that we have an idealized setting with a smooth and strictly monotonically decreasing $\mathbb{E}[\mathrm{EP}_t]$.
Then, a suitable measure for the convergence speed would be the \textit{expected trajectory slope}
\begin{equation}
\label{eq:slope}
    \operatorname{SLOPE}(t) \coloneqq \dfrac{\mathbb{E}[\mathrm{EP}_{t}] - \mathbb{E}[\mathrm{EP}_{t-\Delta t}]}{\Delta t} = \dfrac{\mathrm{d}}{\mathrm{d}t}\mathbb{E}[\mathrm{EP}_t]  + \mathcal{O}(\Delta t), \quad t\geq \Delta_t,
\end{equation}
with time step $\Delta t > 0$.
However, $\mathbb{E}[\mathrm{EP}_t]$ and $\mathbb{E}[\mathrm{EP}_{t-k\Delta t}]$ depend on the unknown posterior, which we cannot access during computation.
Furthermore, a solution of ALDI/LIDL produces only a \textit{single} realization of a family of ensemble distributions $(\hat{\mu})_t$ and hence does not provide access to the expectation.
Therefore, we aim to approximate the above quantities based on information available from such a single realization $(\hat{\mu}_t)_t$.
For this, we devise two simple heuristic approaches.
Let $t_a\geq 0$ be a possible time point for ensemble enrichment. 
Moreover, let $N\in\mathbb{N}$ be a history depth with $t_a - N\Delta t \geq 0$ and $0 \leq t_1,\ldots,t_N=t_a$ with $t_{i} = t_{i+1}-\Delta t$ for $i=1\ldots,N-1$.
The heuristics based on the difference \eqref{eq:diff} and the slope \eqref{eq:slope} respectively are given as follows.

\begin{itemize}
    \item Let $N_1,N_2\in\mathbb{N}$ be local averaging parameters with $N=N_1+N_2$.
    We then define the \textit{difference heuristic} by
    \begin{equation}\label{eq:num_diff_approx}
        \widehat{\operatorname{DIFF}}(t_a)
        =\left|
        \frac{1}{N_1}\sum\limits_{i=1}^{N_1}\mathcal{S}_\epsilon(\hat{\mu}_{t_i},\hat{\mu}_{t_a})
        -
         \frac{1}{N_2}\sum\limits_{i=N_1+1}^{N}\mathcal{S}_\epsilon(\hat{\mu}_{t_i},\hat{\mu}_{t_a})\right|.
    \end{equation}
    Given some threshold $0 < \mathrm{tol} < 1$, we encourage an ensemble enrichment at time $t_a$ if $\widehat{\operatorname{DIFF}}(t_a) < \mathrm{tol} * \widehat{\operatorname{DIFF}}(t_\mathrm{last})$, where $t_\mathrm{last}$ is the time point of the last enrichment, if at least one enrichment has already taken place.
    If no enrichment has occurred yet, we choose some small fixed positive value $ \mathrm{ref}>0 $ and set $\widehat{\operatorname{DIFF}}(t_\mathrm{last}) \equiv \mathrm{ref}$.
    
    \item For the \textit{slope heuristic}, let
        \begin{equation}
        \begin{aligned}
        \label{eq:slope_approx_average}
    \widehat{\operatorname{SLOPE}}(t_a) &\coloneqq \dfrac{1}{N-2}\sum_{i=2}^{N-1} \dfrac{\mathcal{S}_\epsilon(\hat{\mu}_{t_i},\hat{\mu}_{t_a}) - \mathcal{S}_\epsilon(\hat{\mu}_{t_{i-1}},\hat{\mu}_{t_a})}{\Delta t} \\
    &= \dfrac{1}{(N-2)\Delta t}[ \mathcal{S}_\epsilon(\hat{\mu}_{t_{N-1}},\hat{\mu}_{t_a}) - \mathcal{S}_\epsilon(\hat{\mu}_{t_{1}},\hat{\mu}_{t_a})].
\end{aligned}
\end{equation}
Given some threshold $0 < \mathrm{tol} < 1$, we encourage ensemble enrichment at time $t_a$ if $ \widehat{\operatorname{SLOPE}}(t_a) > -\mathrm{tol} * \left| \widehat{\operatorname{SLOPE}}(t_{\mathrm{last}})\right|$, where $t_{\mathrm{last}}$ is defined analogously to the difference heuristic.
\end{itemize}

This choice of heuristics may additionally be motivated as follows.
First, instead of the expectations in \eqref{eq:diff} and \eqref{eq:slope} we use history based averaging to alleviate stochastic fluctuation.
Second, we substitute $\hat{\mu}_{t_a}$ for $\mu_{\ast}^{(\overline{b})}$, since, in the ideal setup of ergodicity of ALDI with $b$ particles, and for $t_a\to\infty$, the measure $\hat{\mu}_{t_a}$ is an instance of an empirical measure $\mu_{\ast}^{(b)}$ defined through $b$ i.i.d. samples from the posterior $\mu_\ast$.

\vspace{1ex}
\paragraph{Error with respect to forward calls}
To carry out the numerical investigation and numerical error analysis, we make use of the error development with respect to both the time $t$ and the number of \textit{forward calls}.
Let $\Delta t$ denote the uniform time discretisation step and let $T=n_{\mathrm{iter}}\Delta t<\infty$ for some $n_{\mathrm{iter}}\in\mathbb{N}$ denote the maximum time point for which we compute the solution of the underlying particle system.
Moreover, let an instance of ALDI be realised with $\overline{b}\in\mathbb{N}$ particles and of LIDL with enrichment sample sizes $(b_0,\ldots, b_L)\in\mathbb{N}_0^{L+1}$, where $L\in\mathbb{N}_0$ denotes the number of enrichment stages, such that $\overline{b}=\sum_{\ell=0}^L b_\ell$.
Furthermore, for $L>0$ let
\begin{equation*}
    t_\ell = k_\ell \Delta t,
\end{equation*}
with $0=t_0 < t_1 < \ldots < t_L < T$ for a suitable number of local iterations $k_\ell\in\mathbb{N}$ denote the time points at which the ensemble enrichment with $b_\ell$ additional particles takes place.
Then, for $k_0=k_{-1}\coloneqq 0$ we define the number of forward calls for an instance of ALDI or LIDL for $t\leq T$ as follows:
\begin{align}
    \label{eq:FC_aldi}
    \mathrm{FC}_{\mathrm{ALDI}}(t) &= 
    \overline{b}_{\phantom{\ell}}\cdot \argmax\limits_{k\in\mathbb{N}}\{k\Delta t \leq t\},\\  
     \label{eq:FC_lidl}
    \mathrm{FC}_{\mathrm{LIDL}}(t) &= 
    \overline{b}_{\ell} \cdot \argmax\limits_{k\in\mathbb{N}}\{k\Delta t \leq t-t_\ell : t_\ell \leq t \leq t_{\ell+1}\} +
    \sum\limits_{\ell\,:\,t_\ell < t}
    \overline{b}_{\ell-1}\cdot (k_{\ell} - k_{\ell-1}),
    %
      %
\end{align}
where $\overline{b}_{\ell} \coloneqq  \sum_{m=0}^{\ell} b_m$ for $\ell = 0,\ldots,L$ denotes the partial sum of batch sizes.
When using the homotopy techniques, we implicitly make use of some function $s(t)$ relating the homotopy time scale $s$ to the time scale $t$ of the particle system.
As discussed in Section \ref{sec:homotopy}, this $s(t)$ is chosen to be piecewise constant in $t$, corresponding to the piecewise constant definition of the potentials from \eqref{eq:auxillary_propagator}.
When required, we may interpret $t\mapsto s(t)$ in terms of the number of function calls, formally defining
\begin{align}
  \label{eq:switch_aldi_fc}
    s_\mathrm{ALDI}(\mathrm{fc}) &= s\left( \mathrm{FC}_{\mathrm{ALDI}}^{-1}(\mathrm{fc})\right),
    \\
    \label{eq:switch_lidl_fc}
        s_\mathrm{LIDL}(\mathrm{fc}) &= s\left( \mathrm{FC}_{\mathrm{LIDL}}^{-1}(\mathrm{fc})\right),
    \end{align}
for a number of function calls $\mathrm{fc}\in\mathbb{N}$. %

\vspace{1ex}
\paragraph{Outline}
In the following, several problems are investigated and discussed.
First, some validation experiments for Gaussian posteriors are performed in Sections \ref{section:num_unimodal_1} and \ref{sec:num_adaptive}.
Second, the application and efficacy of the homotopy approach in the case of multimodal posteriors is examined in Section \ref{sec:num_multi_homotopy}.
Third, we investigate a high-dimensional linear second order PDE problem in Section \ref{sec:num_darcy}. In most examples, we use diffusion propagation \eqref{eq:diffusion_propagation_samples} as our enrichment scheme of choice, as we did not discover substantial differences in performance to other methods.


\subsection{Gaussian mixtures: Unimodal and multimodal posterior}
\label{sec:num:uni_multi}

For a number of mixtures $K\in \{1,4\}$, define the potential 
\begin{equation}\label{eq:num_potential}
    \Phi : \mathbb{R}^2 \longrightarrow \mathbb{R}, \qquad \Phi(x) = -\ln\left( \frac{1}{2\pi K\sqrt{ |\Sigma|}}\sum_{i=1}^K \exp\left( -\frac{1}{2}\| x-x_i \|^2_{\Sigma} \right) \right),
\end{equation}
with $\Sigma=I_2$ being the identity matrix in $\mathbb{R}^2$ and $x_i = (\cos(i\frac{\pi}{2}),\sin(i\frac{\pi}{2}))$ for $i=1,\hdots, 4$.
Then, the posterior density $\pi_\ast$ is given by $\pi_\ast = \exp(-\Phi)$.
As initial distribution for $\{y_0^{i}\}$ in the particle system, we choose a Gaussian with density
\begin{equation}\label{eq:num_initial}
    \pi_0(x) = \frac{1}{2\pi \sqrt{ |\Sigma|}} \exp\left( -\frac{1}{2}\| x-x_3 \|^2_{\Sigma} \right).
\end{equation}
This initial density $\pi_0$ and the posterior $\pi_\ast$ corresponding to $K=4$ are depicted in Figure~\ref{fig:prior_and_posterior}.

\begin{figure}
    \centering
    \includegraphics[width=\linewidth]{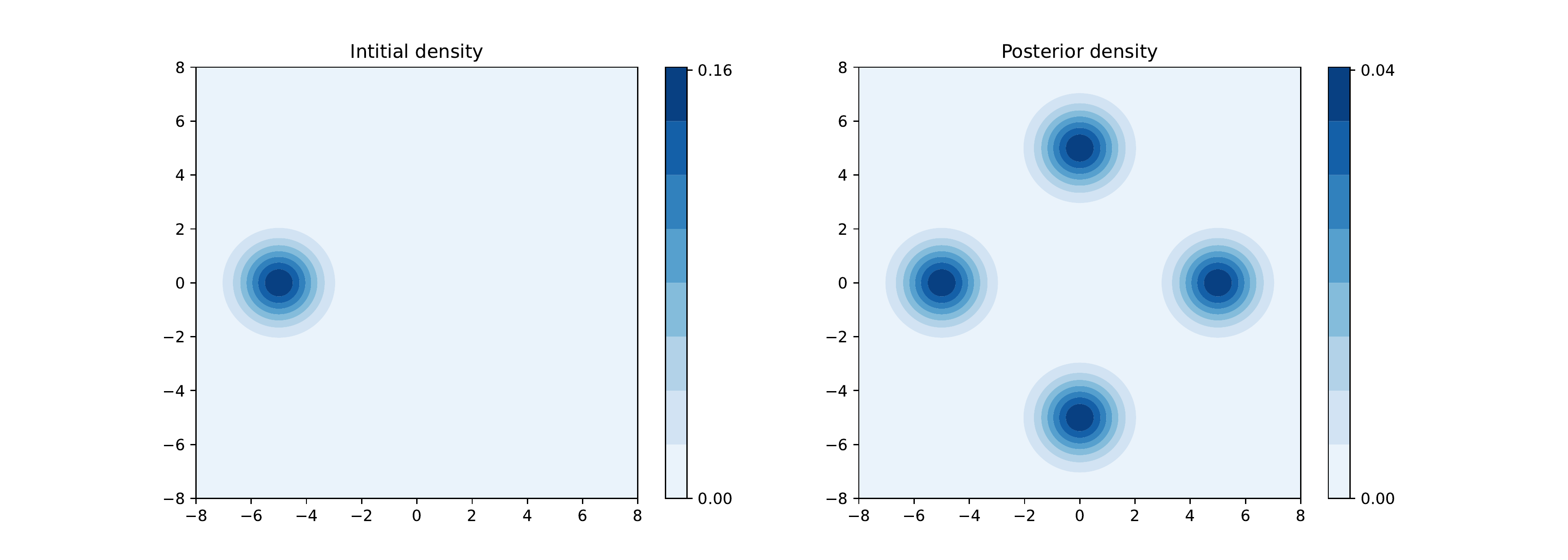}
    \caption{\textit{Initial distribution \eqref{eq:num_initial} and posterior density $\exp(-\Phi)$ with $\Phi$ from \eqref{eq:num_potential} for $K=4$.}}
    \label{fig:prior_and_posterior}
\end{figure}

Hence, for $K=1$ the task of a given particle propagator consists in translating the initial density from $(-5,0)$ to $(5,0)$.
Moreover, for $K=4$ the initial condition is close to a local minimum of the potential $\Phi$.

\subsubsection{The unimodal case: proof of concept}
\label{section:num_unimodal_1}

First, we compare the performance of EKS \cite{garbuno2020interacting}, ALDI \cite{garbuno2020affine} and LIDL on the translation problem, i.e. $K=1$.
The dynamics \eqref{eq:aldi} are discretized using the Euler-Maruyama method with step size $\Delta t = 0.05$ for a finite time horizon with $T=10$ leading to $200$ time steps.
We choose a total batch size of $\overline{b}=400$ for both methods and $L=3$ enrichment stages with $b_0=\ldots=b_3 = 100$ for LIDL.
Ensemble enrichment is done via diffusion steps \eqref{eq:diffusion_step} at fixed time steps $(t_1,t_2,t_3)=(1,2,3)$.
In between ensemble enrichment stages, LIDL uses ALDI as a particle propagator.

As shown in Figure~\ref{fig:iters_fcs}, all methods achieve similar convergence speed with regards to required time steps, but LIDL converges faster in terms of forward calls using this simple enrichment strategy.
Note that ALDI and EKS perform similarly on this problem due to the large batch size $\overline{b}=400$.
The smaller $\overline{b}$ gets the more we expect ALDI to outperform the EKS since the correction term entering \eqref{eq:aldi} is inversely proportional to the batch size (this is demonstrated for the Darcy problem in Section \ref{sec:num_darcy}).
ALDI's ergodicity even for small batch sizes $b > D + 1$ is crucial for LIDL's ensemble enrichment strategy, which is why we use ALDI instead of the EKS as a particle mover.
Due to its superior performance, we mainly use ALDI as a benchmark henceforth.

\begin{figure}
        \centering
        \includegraphics[width = \linewidth]{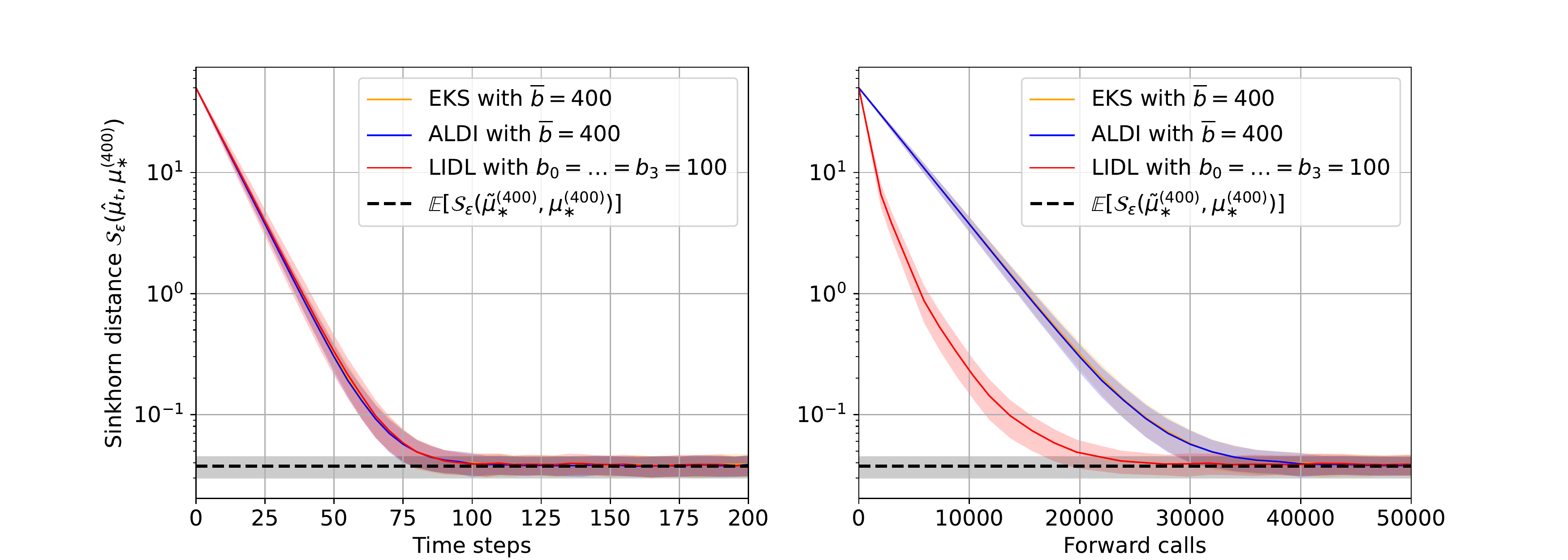}
        \caption{ \textit{
        Convergence history of the $\mathrm{EKS}$, $\mathrm{ALDI}$ and $\mathrm{LIDL}$ for the case of translation problem with potential $\Phi$ from \eqref{eq:num_potential} with $K=1$.
        The shaded areas and the bold interior lines represent standard deviation and mean of the error random variables $\mathrm{PP}$ ({\color{black}black}) from \eqref{eq:PP} and $\mathrm{EP}_t$ from \eqref{eq:EP_t} generated by the $\mathrm{EKS}$ (\textcolor{orange}{orange}), $\mathrm{ALDI}$ (\textcolor{blue}{blue}) and $\mathrm{LIDL}$ (\textcolor{red}{red}). 
        While LIDL leads to similar convergence speed with respect to the time scale (left) it outperforms the classical particle systems in terms of required forward calls (right).
       }
        }
     \label{fig:iters_fcs}
    \end{figure}

\subsubsection{Motivating adaptive Enrichment}
\label{sec:num_adaptive}

A simple equidistant enrichment scheme as in Section \ref{section:num_unimodal_1} is by no means optimal.
In particular, the LIDL method used in Figure~\ref{fig:iters_fcs} is configured to trace ALDI in terms of convergence over the time steps (left plot in Figure~\ref{fig:iters_fcs}).
However, this is not necessarily desirable as Figure~\ref{fig:plateaus} shows.
Here, two ALDI runs with a total particle number of $\overline{b}=50$ and $\overline{b}=400$ each are compared with a LIDL run with $b_0 = 50, b_1 =350$ and the equidistant enrichment $b_i= 100$ for $i=0,\dots,3$ from Section \ref{section:num_unimodal_1}. 
The ensemble enrichment to obtain $350$ samples from $50$ samples is realised with diffusion propagation \eqref{eq:diffusion_step} using $\delta=k\Delta t$ with $k=1,\ldots, 7$.
The non-equidistant enrichment approach clearly lacks behind the ALDI scheme with $\overline{b}=400$ particles with respect to the time steps (left plot).
But examining the convergence with respect to forward calls (right plot), we see that this behaviour is actually beneficial as long the solver is in the region of decay for that particular particle number $\overline{b}=50$.
Roughly speaking, LIDL follows the trajectory of a $50$-particle ALDI run as long as it has not converged.
Once convergence with that particle number is reached, LIDL jumps (by means of the ensemble enrichment) onto the trajectory corresponding to a 400-particle ALDI run.
As can be seen in the right plot in Figure~\ref{fig:plateaus}, hardly any convergence speed is lost due to the enrichment. 
Here, the gray shaded areas can be seen as the maximum possible accuracy for the specific choice of finite batch size $B = 50,400$ up to the desired amount of particles $\overline{b}$.

In Figure~\ref{fig:double_sink}, all four methods are tracked with respect to the double Sinkhorn error from \eqref{eq:cont_doublesinkhorn} approximated empirically using \eqref{eq:double_sinkhorn_estimate} with $R=210$ independent runs for each of the four instances.
We observe that at a double Sinkhorn error of around $10^{-7}$ the curves start to fluctuate, which is explained by the empirical approximation with $R<\infty$.
When comparing both enrichment strategies for LIDL, we see that the $(50,350)$ approach outperforms the $(100,100,100,100)$ setup in terms of forward calls.
This motivates to find adaptive schemes in terms of time points $t_{\ell}$ and batch sizes $b_{\ell}$ of ensemble enrichment.

\begin{figure}
    \centering
    \includegraphics[width = \linewidth]{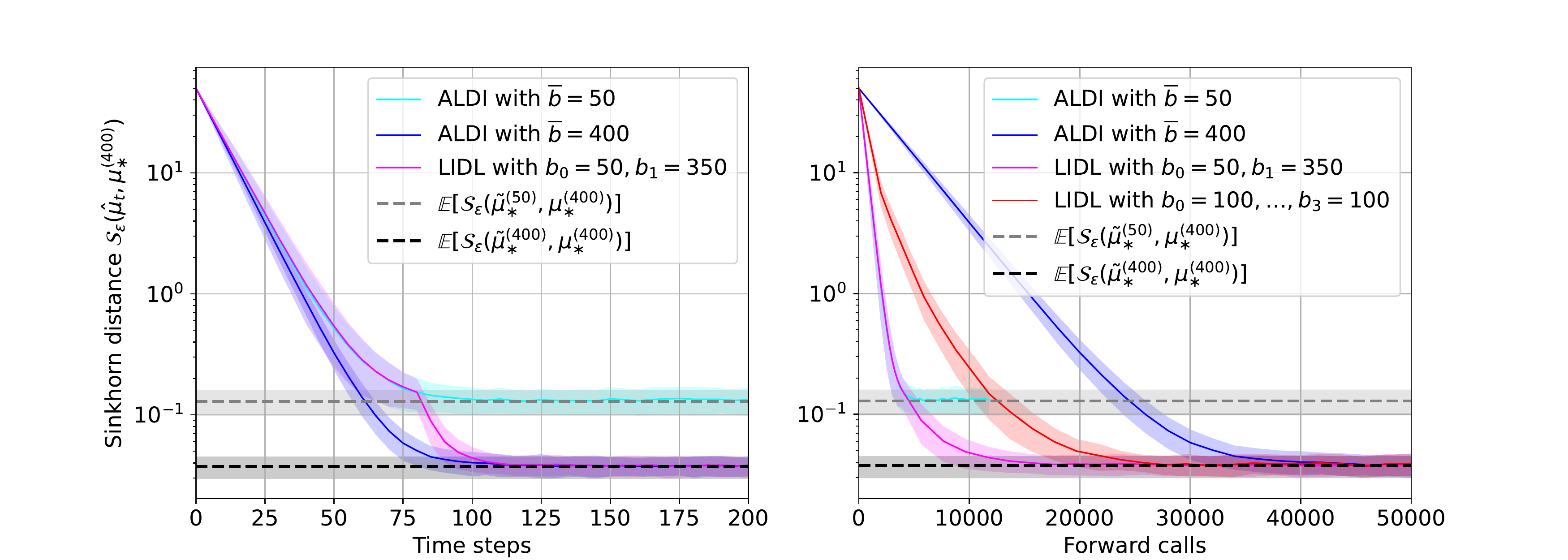}
    \caption{
    \textit{ Performance of $\mathrm{LIDL}$ and $\mathrm{ALDI}$ with different choice of batch sizes and enrichment times.
    Instead of tracing the convergence of the 400-particle $\mathrm{ALDI}$ run (\textcolor{blue}{blue}) over time steps, $\mathrm{LIDL}$ (\textcolor{magenta}{magenta}) follows the 50-particle $\mathrm{ALDI}$ run (\textcolor{cyan}{cyan}) until it is close to its accuracy limit (light gray plateau).
    $\mathrm{LIDL}$ then jumps 
    to the 400-particle trajectory while saving a substantial number of forward calls compared to the full 400-particle $\mathrm{ALDI}$ run.
    For comparison, the suboptimal $\mathrm{LIDL}$ configuration from Figure~\ref{fig:iters_fcs} is added (\textcolor{red}{red}).}}
    \label{fig:plateaus}
\end{figure}

\begin{figure}
        \centering
        \includegraphics[scale=0.5]{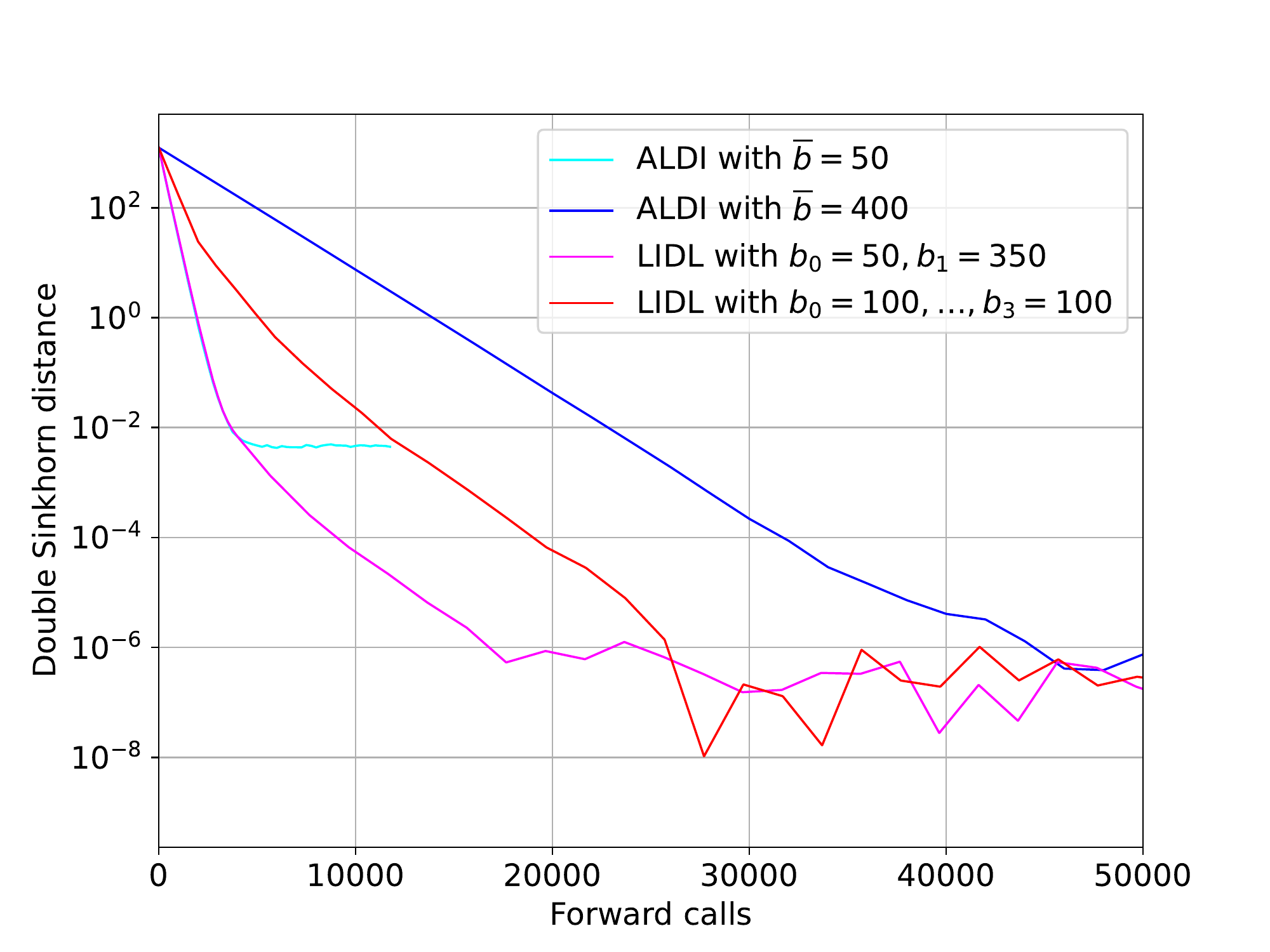}
        \caption{\textit{Double-Sinkhorn distance \eqref{eq:cont_doublesinkhorn} for the four methods from Figure~\ref{fig:plateaus}.
        While the full $\mathrm{ALDI}$ run with $400$ particles (\textcolor{blue}{blue}) has still not converged for $50000$ forward calls, $\mathrm{LIDL}$ (\textcolor{magenta}{magenta}) seems to converge after $\sim 30000$ forward calls with a distance fluctuating stochastically around $10^{-7}$.}}
        \label{fig:double_sink}
    \end{figure}

\paragraph{Fixed batch size enrichment at heuristically determined time points}
While adaptivity in the batch size is a topic for future work, we already present preliminary results for our time point heuristics.
The difference and slope heuristics \eqref{eq:num_diff_approx} and \eqref{eq:slope_approx_average} are designed with the goal to recognize the occurrence of plateaus as seen in the dashed grey line in Figure \ref{fig:iters_fcs}.
In the emergence of such a plateau, \eqref{eq:num_diff_approx} should be positive and close to 0 while \eqref{eq:slope_approx_average} should be negative and close to 0.
We deploy both heuristics under the exact same conditions as in Section \ref{section:num_unimodal_1} for a LIDL run with 3 enrichments and $b_0=\ldots=b_3=100$.
In practice, we check the heuristics only every $5$ time steps to save computation time.
We choose $\mathrm{tol}=0.5$ and $\mathrm{ref}=1$ for both heuristics.
In our case this leads to average enrichment times $(\overline{t}_1,\overline{t}_2,\overline{t}_3)=(2.6,3.65,4.75)$ for the difference heuristic and $(\overline{t}_1,\overline{t}_2,\overline{t}_3)=(3.2,4.3,5.95)$ for the slope heuristic.
Note that the required forward calls \eqref{eq:FC_lidl} at time $t$ now vary from run to run.
For a fixed $t$, we denote the average of \eqref{eq:FC_lidl} over all LIDL runs by $\overline{\mathrm{FC}_{\mathrm{LIDL}}}(t)$.
For an infinite number of runs $R\to\infty$ this becomes the expected number of forward calls at time $t$ with a given heuristic.
In Figure \ref{fig:adaptive_schemes}, we consider this average when plotting both $\mathrm{EP}_t$ and the double Sinkhorn distance with respect to the number of forward calls.
It can be seen that for this problem both heuristics perform better than the naive LIDL configuration with fixed equidistant enrichment stages.
In particular, both heuristics deploy ensemble enrichment before a suboptimal convergence plateau is reached.
At the same time, the methods utilize smaller batch sizes long enough to benefit from the fast convergence speed.
While the unimodal Gaussian is only a very simple test case, these promising results encourage future work on adaptive heuristics.

\begin{figure}
    \centering
    \includegraphics[width=\linewidth]{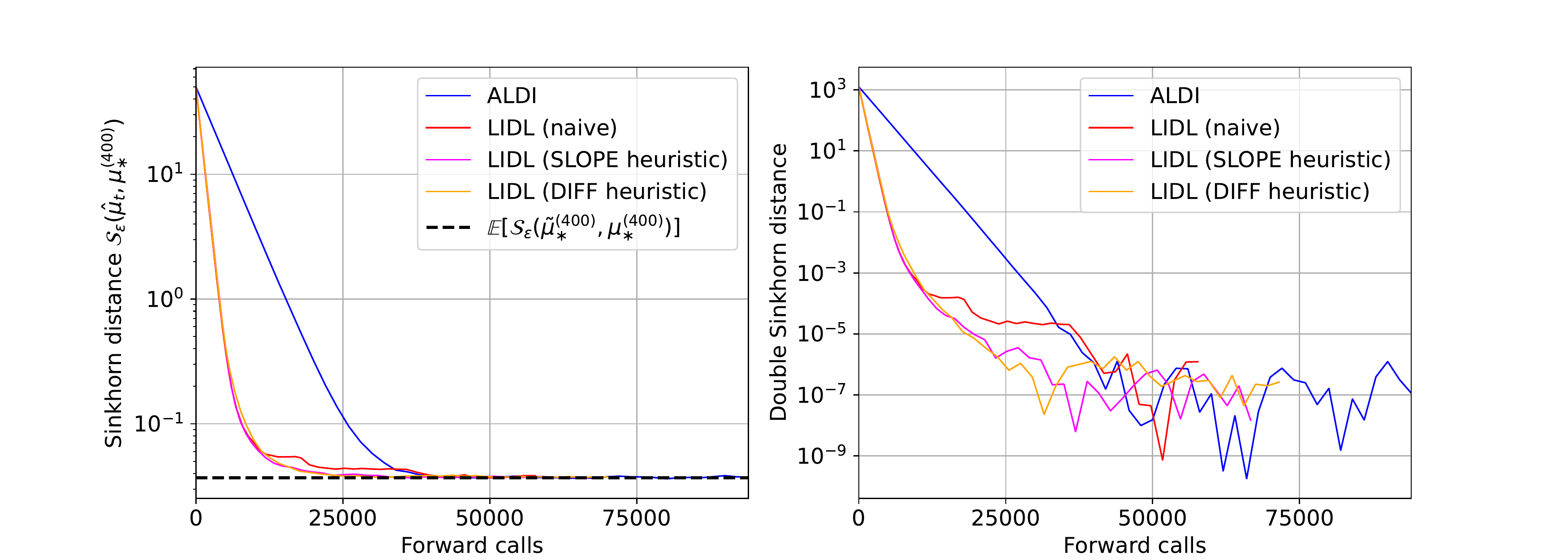}
    \caption{\textit{Difference and slope heuristics applied to the unimodal Gaussian posterior from Section \ref{section:num_unimodal_1}.
    Both heuristics clearly outperform $\mathrm{ALDI}$ as well as the naive $\mathrm{LIDL}$ implementation.}}
    \label{fig:adaptive_schemes}
\end{figure}

\subsubsection{The multimodal case: homotopy approach}
\label{sec:num_multi_homotopy}

For $K=4$ the task is more challenging since the center of the initial density \eqref{eq:num_initial} is now close to a local minimum of the potential \eqref{eq:num_potential} (see Figure~\ref{fig:prior_and_posterior}).
Figure~\ref{fig:num_stage2} showcases the particle movement for $K=4$ with ALDI-based LIDL.
In Stage 1, an auxiliary potential $\Psi$ -- in this case a zero mean Gaussian with covariance matrix $5\Sigma$ -- is used to precondition the data.
In this particular case, the auxiliary potential is used to get the particles out of the vicinity of the local minimum near $(-5,0)$.
In stage 2, LIDL is deployed with successive ensemble enrichments in $L=3$ stages and a homotopy based potential $\mathcal{H}$ interpolating between $\mathcal{H}(0)=\Psi$ and $\mathcal{H}(1)=\Phi$.

\begin{figure}
\begin{subfigure}{.23\textwidth}
  \centering
  \includegraphics[width=.8\linewidth]{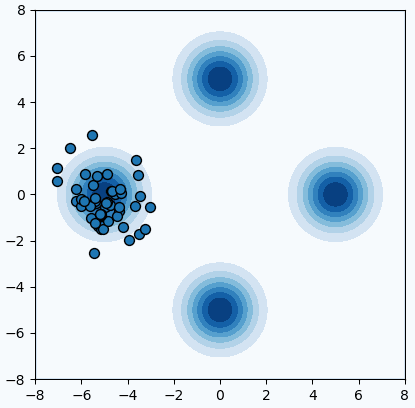}
  \caption{}
  \label{fig:sfig1}
\end{subfigure}%
\begin{subfigure}{.23\textwidth}
  \centering
  \includegraphics[width=.8\linewidth]{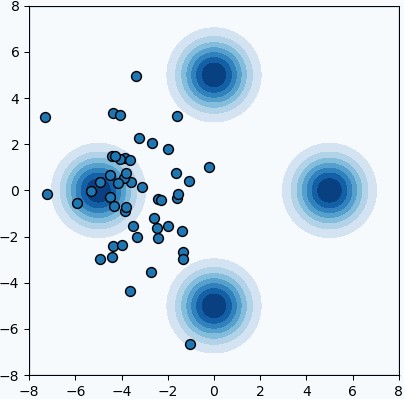}
  \caption{}
  \label{fig:sfig2}
\end{subfigure}
\begin{subfigure}{.23\textwidth}
  \centering
  \includegraphics[width=.8\linewidth]{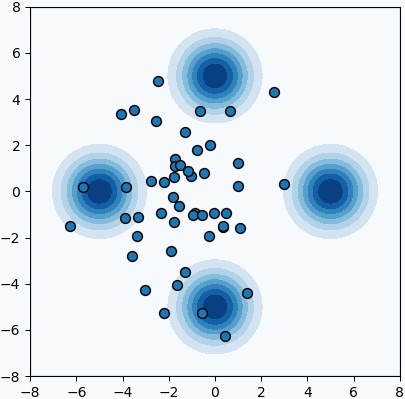}
  \caption{}
  \label{fig:sfig3}
\end{subfigure}
\begin{subfigure}{.23\textwidth}
  \centering
  \includegraphics[width=.8\linewidth]{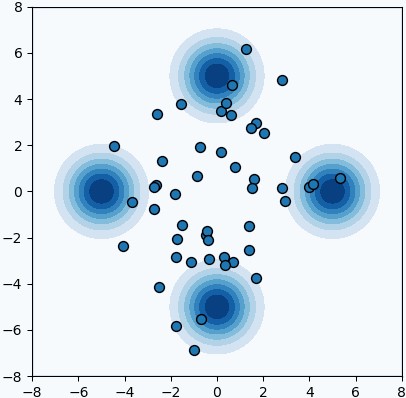}
  \caption{}
  \label{fig:sfig4}
\end{subfigure}%

\begin{subfigure}{.23\textwidth}
  \centering
  \includegraphics[width=.8\linewidth]{slides/stage14.jpg}
  \caption{}
  \label{fig:sfig5}
\end{subfigure}%
\begin{subfigure}{.23\textwidth}
  \centering
  \includegraphics[width=.8\linewidth]{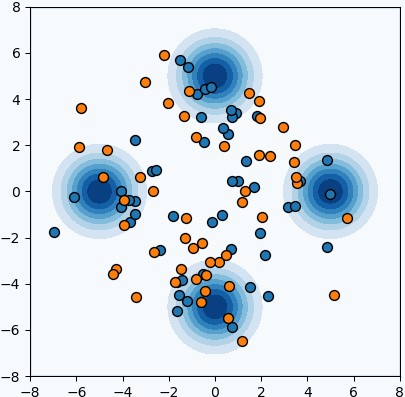}
  \caption{}
  \label{fig:sfig6}
\end{subfigure}
\begin{subfigure}{.23\textwidth}
  \centering
  \includegraphics[width=.8\linewidth]{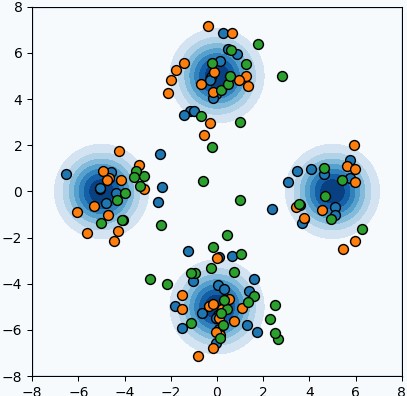}
  \caption{}
  \label{fig:sfig7}
\end{subfigure}
\begin{subfigure}{.23\textwidth}
  \centering
  \includegraphics[width=.8\linewidth]{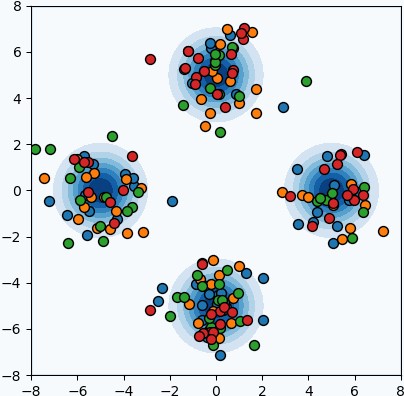}
  \caption{}
  \label{fig:sfig8}
\end{subfigure}
\caption{\textit{\textbf{Stage I (top row)}: An auxiliary potential corresponding to a centered Gaussian distribution with covariance $5I_2$ is used to precondition a batch of $b_0 = 50$ initial samples.
The particles are propagated with time moving forward from Figure (a) to (d) using $\mathrm{ALDI}$.
In (d), the samples follow the distribution associated to the auxiliary potential.
In particular, they have been moved out of the vicinity of the local minimum near $(-5,0)$.\\
\textbf{Stage II (bottom row)}: The potential used in stage I is successively replaced via linear homotopy by the true multimodal potential (depicted as a contour plot behind the particles).
The original batch (\textcolor{blue}{blue}) of $50$ particles is enriched $3$ times by $b_1 = b_2 = b_3 = 50$ particles each (\textcolor{orange}{orange}, \textcolor{green}{green}, \textcolor{red}{red}) during $\mathrm{ALDI}$ propagation, yielding a total batch of $\overline{b} = 200$ approximate posterior samples in (h).}}
\label{fig:num_stage2}
\end{figure}

\paragraph{Comparison of homotopy switch designs $s(t)$}
We now shift the focus to homotopy based approaches and compare these with a standard ALDI, which we denote as \textit{plain} ALDI.
In the experiment, $\overline{b}=200$ particles are propagated for a time horizon $T=40$ using step size $\Delta t = 0.01$ leading to a total of $800$ iterations.
We deploy $3$ different interpolation variations $s_{\mathrm{ALDI}}$ for the homotopy based approach, starting with an auxiliary zero-mean Gaussian potential $\mathcal{H}(0)$ with covariance $8I_2$ for $t\in[0,2]$ (we have chosen $5 I_2$ in Figure~\ref{fig:num_stage2} for the purpose of presentation.
In fact, a more spread out auxiliary distribution is more advantageous).
Then, for $t\in[a,b]:=[2,18]$ we use a linear ($\alpha_1 t+\alpha_2$), convex ($\beta t^4$) and concave ($1 - \gamma (t-b)^4$) speed of change for $\alpha_1,\alpha_2,\beta,\gamma\in\mathbb{R}$ such that the corresponding interpolation maps are continuous with values $s(t)=0$ at $t\leq 2$ and $s(t)=1$ at $t\geq 18$.
The corresponding $s_{\mathrm{ALDI}}$ are illustrated in dashed lines in Figure~\ref{fig:homotopy_comparison}.
Note since we are using ALDI without ensemble enrichment, the representation of $s$ with respect to time $t$ and forward calls $\mathrm{fc}$ is a linear rescaling only.
Since the posterior distribution is more spread out due the multimodality, we choose $R=2000$ runs to approximate the expectations.
The results of this experiment are illustrated in Figure~\ref{fig:homotopy_comparison}. 
As a first observation, the strategy of plain ALDI leads to very slow convergence.
The three homotopy based approaches achieve fast initial convergence due to the auxiliary potential $\mathcal{H}(0)$.
The subsequent performance depends on the corresponding homotopy switches $s_{\mathrm{ALDI}}(t)$.
We observe that the concave version performs the best, almost carrying on with the initial convergence speed for times $t > 2$.
This numerical result clearly motivates the further investigation of potential optimal choices of homotopy switch designs.

\begin{figure}
    \centering
    \includegraphics[width=0.85\linewidth]{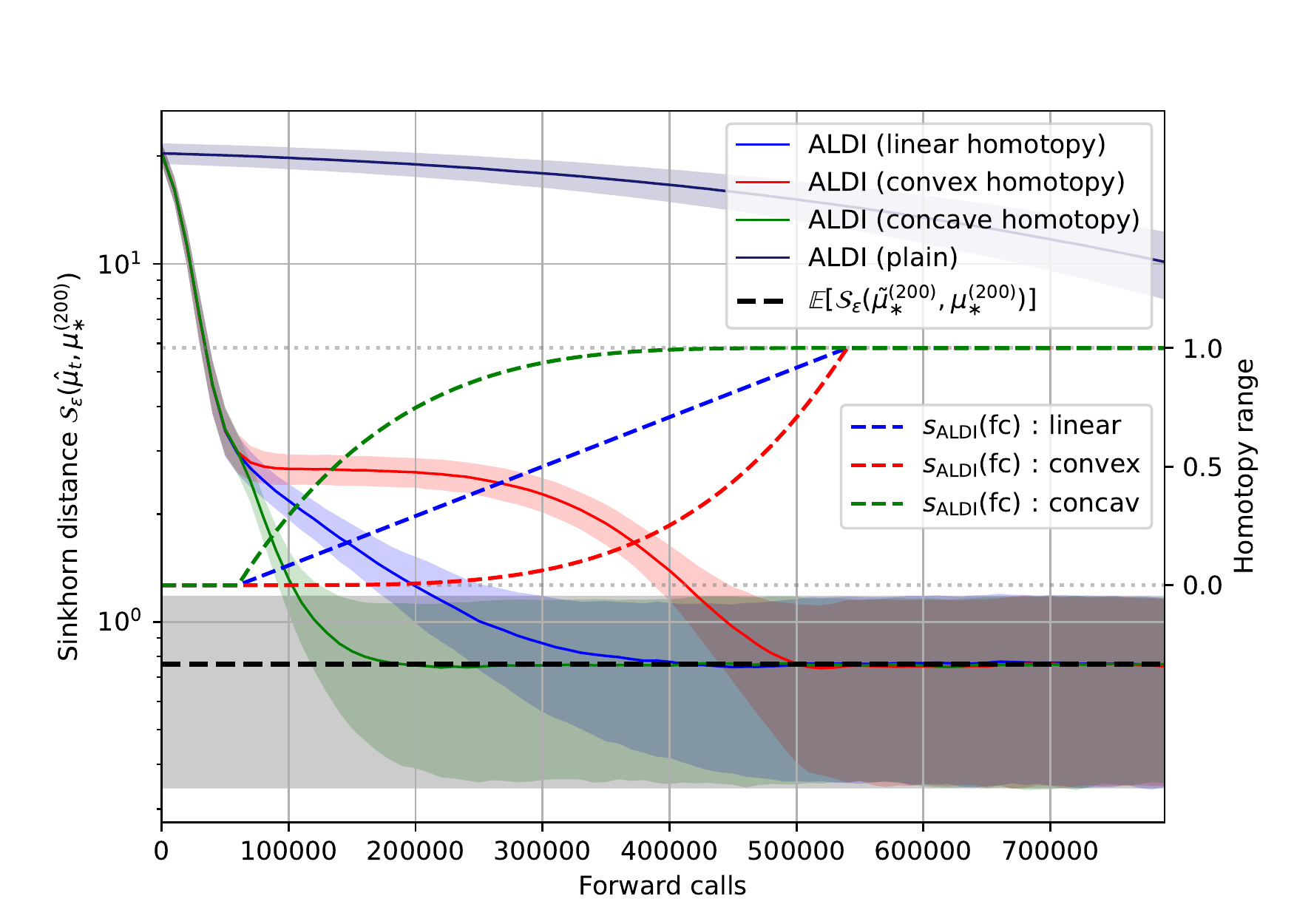}
    \caption{\textit{Comparison of different homotopy switch designs with the plain $\mathrm{ALDI}$ strategy based on $\overline{b}=200$ particles.
    While the slow convergence of plain $\mathrm{ALDI}$ underlines the potential of the homotopy approach, possible improvements of the actual switch point design becomes clear when comparing a linear, convex and concave homotopy variation.}}
    \label{fig:homotopy_comparison}
\end{figure}

From Figure~\ref{fig:homotopy_comparison} we conclude that the design of the homotopy has a significant impact on the convergence speed.
In particular, the concave case is a faster switch design compared to classical linear interpolation within the desired time horizon.
We note that in the numerical investigation this switch design can be critical if the switches towards the potential $\Phi$ are too fast.
Then, no convergence or at least a very slow convergence is observed asymptotically.
The slow convergence then matches the convergence speed of the plain non-homotopy based SDE scheme.

\paragraph{Combination of homotopy and ensemble enrichment}
As a next experiment, we investigate the interaction and the possible advantages of both, homotopy and ensemble enrichment.
Here, we consider the case of linear homotopy and concave switch design combined with $L=3$ enrichment stages.
In particular for the linear homotopy we use
\begin{equation}
\label{eq:numeric_linear_homotopy}
 s(t) = \begin{cases}
        0, & t <0.2T, \\
        \frac{10}{7T}t-\frac{2}{7}, & t\in[0.2T,0.9T],\\
        1, & t > 0.9 T,
        \end{cases}
\end{equation}
with a time horizon $T=40$.
For the concave homotopy, we use the switch design
\begin{equation}
\label{eq:numeric_concave_homotopy}
 s(t) = \begin{cases}
        0, & t <0.1T, \\
        1- \frac{1}{(0.8T)^4}(0.9T - t)^4, & t\in[0.1T,0.9T],\\
        1, & t > 0.9 T,
        \end{cases}
\end{equation}
with $T = 40 $ for ALDI and $T=60$ for LIDL.
The results of the experiments are depicted in Figures~\ref{fig:homotopy_lin_enrich} and~\ref{fig:homo_concav_enrich}.
Here, the time interval on which $s(t)\equiv 0$ is considered as free forward calls since no evaluation of the posterior potential $\Phi$ is performed.
Consequently, in both experiments the respective ALDI and LIDL schemes reach plateaus, indicating convergence to the auxiliary measure associated to $\exp(-\Psi)$. 
As an observation we note that the ensemble enrichment has an impact on the actual switch design $s_{\mathrm{LIDL}}$ as a function of function calls as defined in \eqref{eq:switch_lidl_fc}.
While $s_{\mathrm{ALDI}}(\mathrm{fc})$ is a linear rescaling of its $t$-dependent version $s_{\mathrm{ALDI}(t)}$, the effect is non-linear in the case of LIDL. 
An interesting observation can be made in the linear homotopy case.
In Figure~\ref{fig:homotopy_lin_enrich}, the enrichment scheme yields a concave (piecewise linear) switch design (red line) when interpreted as a function of function calls.
The fact that LIDL with linear homotopy turns out to require less forward calls may be explained by the faster switch speed. 
In particular, the use of smaller batch sizes for a fixed time horizon requires less forward calls spent on the auxiliary intermediate measures.
Once the maximum batch size $\overline{b}$ is reached, LIDL recovers the convergence speed of ALDI. 
We again use time step $\Delta t = 0.01$ and time horizon $T=40$ with $R=2000$ runs with a maximum number of $\overline{b}=200$ particles.
A total of $L=3$ ensemble enrichment stages $(b_0,b_1,b_2,b_3) = (20,40,60,80)$ are realized at $(t_1,t_2,t_3) = (12,15,18)$, based on forward slicing. 

The ensemble enrichment strategy is more involved in the concave homotopy setup shown in Figure~\ref{fig:homo_concav_enrich}.
Here, the construction of suitable time steps for the enrichment is not straightforward.
In particular, in the numerical investigation not every enrichment concept allowed for increased convergence speed.
For example, using the equidistant enrichment scheme from the linear homotopy case leads to a significant slow-down of convergence.
This phenomena may be explained by observing that such equidistant enrichment leads to a switch design $s_{\mathrm{LIDL}}(\mathrm{fc})$ that changes too quickly towards the posterior potential.

Consequently, the homotopy and the sample enrichment design in general need to interact.
In our experiment we construct an a priori enrichment scheme as follows.
Let $0=s_0<s_1 <s_2<s_3 < s_4=1$ be an equidistant partition of the switch range $[0,1]$.
We then define the time point for an enrichment and some $\gamma>0$ by
\begin{equation}
\label{eq:num_t_i_interact}
    t_i := \argmin  |s(t)-s_i^\gamma|,\quad i=1,2,3.
\end{equation}
In the experiment we use $\gamma=1.0$ and the batch size enrichment $b_\ell=100$ for $\ell=0,\ldots,L$.

The chosen concave switch design of order $4$ seems to be a reasonable choice.
It can be observed that significantly increasing the interpolation speed leads to a severe slow-down of the convergence speed.
This effect may be controlled by choosing smaller time steps.
While an increased switch speed may enable a good approximation of the posterior for a smaller time horizon $T$, the introduced stiffness in the SDE system may be computationally prohibitive due to the necessity of a smaller time step $\Delta t$.
Consequently, a proper automatic design of the homotopy speed and the enrichment stages should also involve an adaptive time stepping scheme.
In Figure~\ref{fig:homo_concav_enrich}, we additionally plot the current partial batch sizes.
The initial lower batch sizes lead to a faster switch (red dashed line) and increase the convergence speed locally.
However, due to the choice of $T=60$ for LIDL the switch $s_{\mathrm{LIDL}}(\mathrm{fc})$ becomes slower than $s_{\mathrm{ALDI}}(\mathrm{fc})$, leading to an expected slow-down of convergence.
In this example, a total of $L=3$ ensemble enrichment stages $(b_0,b_1,b_2,b_3) = (50,50,50,50)$ are realized at $t_i>0$ as defined in \eqref{eq:num_t_i_interact} based on diffusion propagation.
While Figure~\ref{fig:homo_concav_enrich} displays the result of one hand crafted design, it may still be far away from optimality.
This motivates the development of fully adaptive schemes that are capable to automatically construct promising switch and enrichment stage designs including time step adaptivity to adjust the switch speed.

For the enrichment schemes, we use forward slicing in the linear homotopy case and diffusion propagation for the concave setup.
In the numerical investigation, no significant difference between the schemes can be observed.

\begin{figure}
    \centering
    \includegraphics[width=\linewidth]{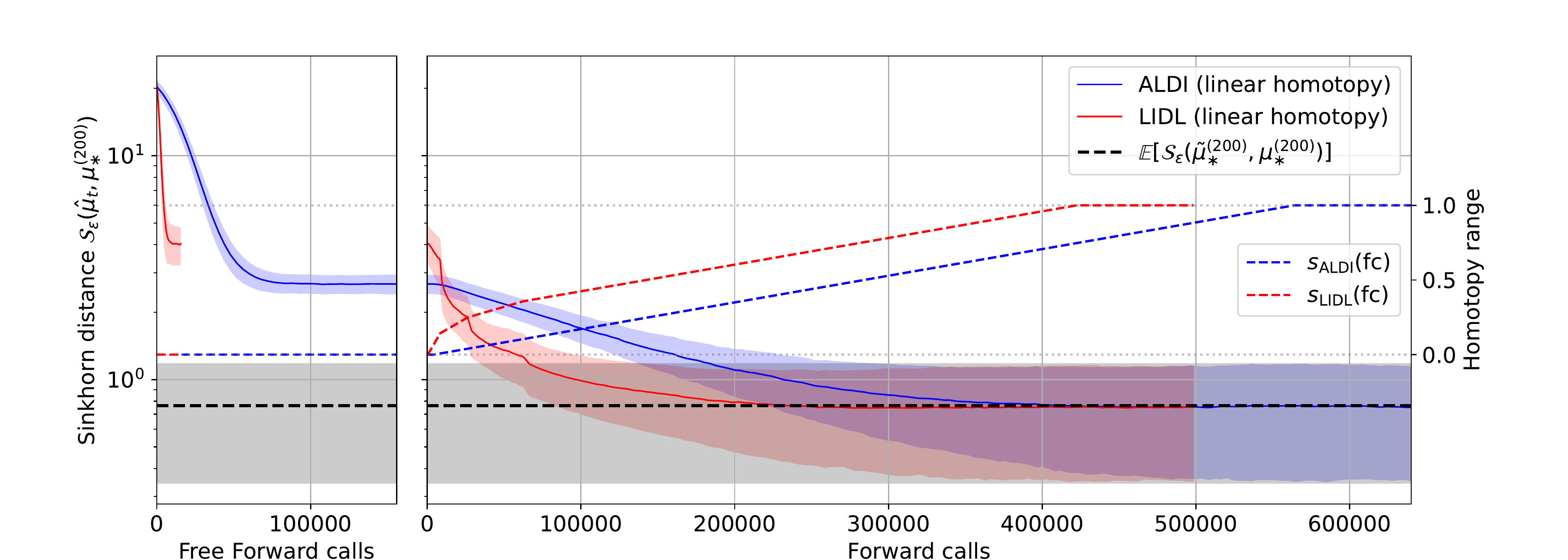}
    \caption{\textit{$\mathrm{LIDL}$ with ensemble enrichment and homotopy approach applied to the potential $\Phi$ from \eqref{eq:num_potential} for $K=4$.
    Both $\mathrm{ALDI}$ and $\mathrm{LIDL}$ use auxiliary potential $\Psi$ corresponding to a centered Gaussian distribution with covariance $8I$. 
    Here, we consider the linear homotopy defined in \eqref{eq:numeric_linear_homotopy}.
    The left subplot shows the convergence when using the potential $\mathcal{H}(s)$ for $s=0$ referred to as free forward calls since no evaluation of $\Phi$ is involved.
    The combination of linear homotopy and sample enrichment leads to faster convergence.    
}}
    \label{fig:homotopy_lin_enrich}
\end{figure}

\begin{figure}
    \centering
    \includegraphics[width=\linewidth]{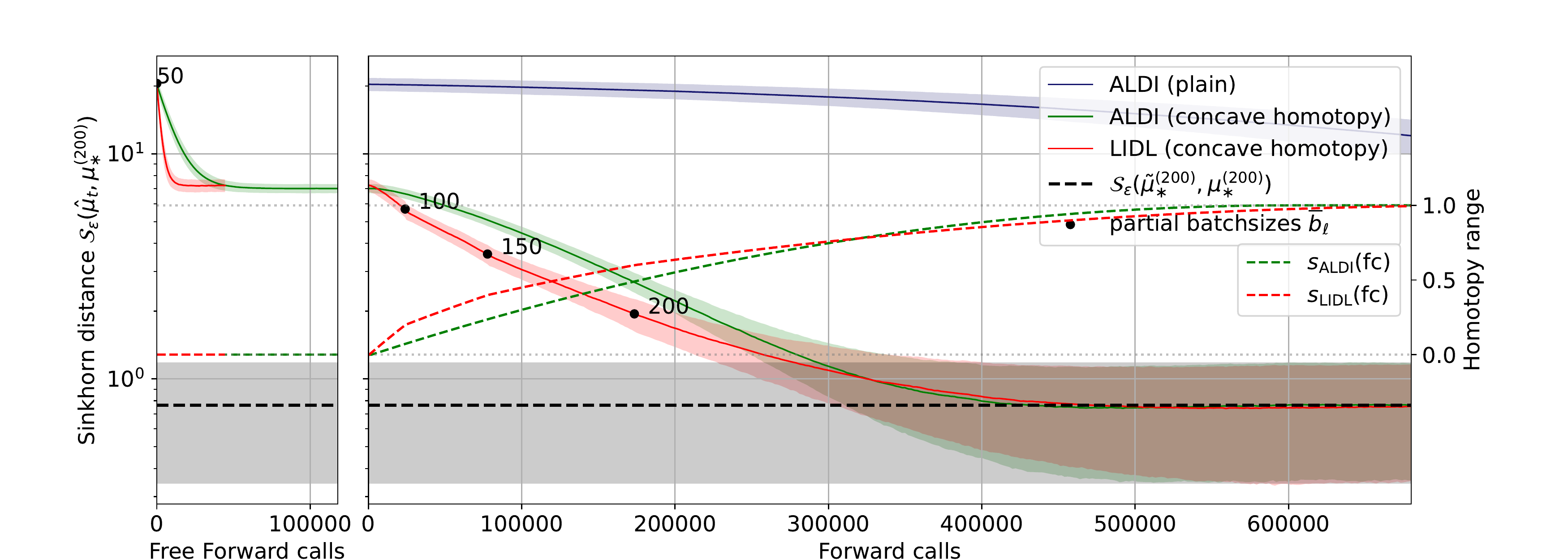}
    \caption{
    \textit{$\mathrm{LIDL}$ with ensemble enrichment and homotopy approach applied to the potential $\Phi$ from \eqref{eq:num_potential} for $K=4$ with concave switch design $s(t)$ defined in \eqref{eq:numeric_concave_homotopy}.
    Both $\mathrm{ALDI}$ and $\mathrm{LIDL}$ use auxiliary potential $\Psi$ corresponding to a centered Gaussian distribution with covariance $8I$.
    The black dots mark the partial batch sizes used in the subsequent propagation within the $\mathrm{LIDL}$ scheme up to $\overline{b}=200$ particles.
    The left subplot displays the convergence when using the potential $\mathcal{H}(s)$ for $s=0$ referred as free forward calls since no evaluation of $\Phi$ is involved.
    This particular design leads to fast initial convergence but is still suboptimal.
    }}
    \label{fig:homo_concav_enrich}
\end{figure}

\subsection{High dimensional example: parametric Darcy equation}
\label{sec:num_darcy}
In this section the one-dimensional parametric Darcy equation as discussed in \cite{garbuno2020affine} is investigated.
Consider the inverse problem of determining the permeability field $a(x)>0$ based on the solution $p$ of an elliptic PDE
\begin{equation*}
    -\partial_x (a(x)\partial_x p(x)) = f(x), \qquad \textnormal{for}\quad x\in I = [0,2\pi). 
\end{equation*}
For $K=10$ we define observation points $\hat{x}_k$ in $I$ by
$$
\hat{x}_k = \dfrac{2\pi(k-1)}{K}, \qquad k=1,\hdots,K,
$$
and consider noisy measurements $\delta_k$ of the solution $p$ at these discrete points defined by
\begin{equation*}
    \delta_k := p(\hat{x}_k) + \eta_k, 
\end{equation*}
where the measurement errors $\eta_j$ are i.i.d. Gaussians with zero mean and variance $\sigma_R = 10^{-4}$.
As in \cite{garbuno2020affine}, this infinite-dimensional problem is made finite-dimensional by introducing a computational grid
\begin{equation*}
    x_i := \dfrac{2\pi i}{D}, \qquad i=0,\hdots,D-1.
\end{equation*}
We choose $D=50$ for the computational grid $\mathcal{M}=\{x_1,\ldots,x_D\}$ and consider the finite-difference formulation
\begin{equation*}
    \dfrac{a_{i+1/2}(p_{i+1}-p_i)-a_{i-1/2}(p_i-p_{i-1})}{h^2} = -f_i,
\end{equation*}
with mesh size $h = \frac{2\pi}{D}$ and $p_i \approx p(x_i)$, $f_i = f(x_i)$ and $a_{i-1/2} = \exp(u_i)$. 
Since $\{f_i\}_{i=0}^{D-1}$ is known and fixed, the forward operator $\mathcal{G}$ maps $\{u_i\}_{i=1}^{D} \in \mathbb{R}^D$ to the restriction of the discrete solution $\{p_i\}_{i=0}^{D-1} \in \mathbb{R}^D$ to the observation grid $\{\hat{x}_j\}_{j=1}^K \in \mathbb{R}^K$.
Note that the observation grid has to be a subset of the computational grid.

In this example, we use the forcing 
\begin{equation*}
    f_i = \exp\left(\dfrac{-(2x_i-2\pi)^2}{40}\right) - \frac{3}{5}.
\end{equation*}
The prior on $u$ is defined to be centered Gaussian and covariance matrix $P_0$ defined by
\begin{equation*}
    P_0^{-1} = 4h \left( \dfrac{\mu}{D}1_D 1_D^{\intercal} - \Delta_h \right)^2,
\end{equation*}
where $\Delta_h$ is the standard finite-difference discretization of the Laplace operator with periodic boundary on $I$ with uniform mesh width $h$,
$1_D=(1,\ldots,1)^{\intercal}\in\mathbb{R}^{D}$ 
and $\mu = 10^2$ leads to a penalization of deviations of the spatial mean of $u$ away from $0$.

Having all these quantities in place, we generate observations according to
\begin{equation*}
    \delta_j = p_\ell + \eta_j, \qquad \ell = \dfrac{D}{K}j = 5j, \qquad \eta_j\sim\mathcal{N}(0,\sigma_R), \qquad j=1,\hdots,K,
\end{equation*}
were the indices $\ell$ are defined such that all observations lie on the observation grid.
The discrete observations $p_i$ on the computational grid are generated with
\begin{equation*}
    a^{\dagger}_{i-1/2} = \exp(u_i^{\dagger}), \qquad u_i^{\dagger} = \dfrac{1}{2}\sin(x_i-h/2),
\end{equation*}
for $i=1,\hdots,D$.


 
 In Figure~\ref{fig:num_darcy_lidl}, different sampling methods are applied to the Darcy problem.
 We consider EKS/ALDI setups with a total number of samples $\overline{b}=240$ as well as EKS/ALDI-based LIDL setups with  $(t_1,t_2,t_3) = (1,1.5,1.75)$ and $b_0=\ldots=b_3=60$, i.e. batch sizes only slightly above the minimum amount $b=52$ of samples necessary to ensure ALDI's ergodicity in the linear case.
 The time step is $\Delta t = 0.01$ and all methods are run up to $T=8$.
 Expectations are approximated by empirical averages over $R=70$ runs each.
 As is expected, due to the rather small batch size EKS and EKS-based LIDL do not converge to the true posterior distribution.
 This can be observed in the Sinkhorn distance plot on the left, where both methods produce an expectation slightly lower than $\mathbb{E}[\mathcal{S}_{\epsilon}(\tilde{\mu}_{\ast}^{(240)},\mu_{\ast}^{(240)})]$.
 In the right plot, we can observe a stagnation in the double Sinkhorn distance \eqref{eq:cont_doublesinkhorn} at around $10^{-5}$ for the EKS-based methods, while ALDI and ALDI-based LIDL seem to fluctuate around $10^{-8}$.
 Comparing all methods, ALDI-based LIDL seems to perform best, achieving a double Sinkhorn distance of $10^{-8}$ in roughly half the amount of forward calls ($\sim 50,000$) as ALDI ($\sim 90,000$).


\begin{figure}
    \centering
    \includegraphics[width=\textwidth]{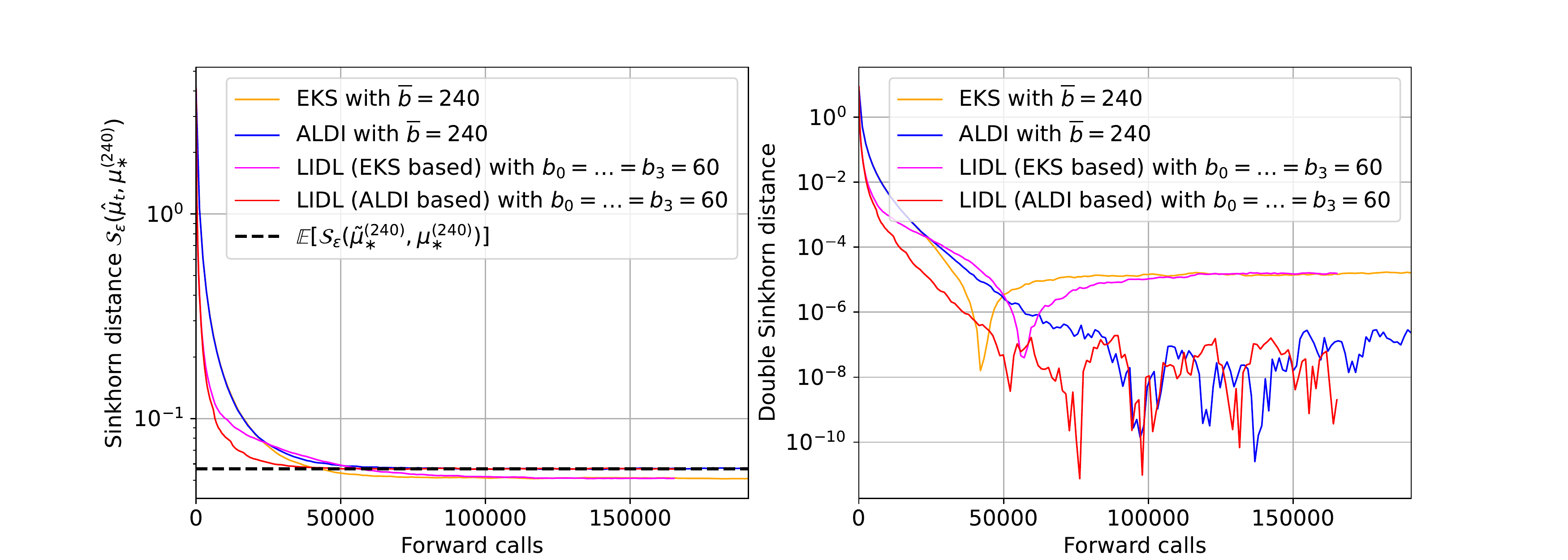}
    \caption{\textit{Performance of different samplers with $\overline{b}=240$ for the Darcy problem measured in expected Sinkhorn divergence $\mathrm{EP}_t$ (left) and double Sinkhorn distance \eqref{eq:cont_doublesinkhorn} (right).
    While $\mathrm{ALDI}$-based $\mathrm{LIDL}$ seems to converge the fastest with a double Sinkhorn distance of $10^{-8}$ after $\sim50000$ function calls, $\mathrm{EKS}$ and $\mathrm{EKS}$-based $\mathrm{LIDL}$ do not converge to the correct posterior distribution.
    Posterior samples are generated with an affine invariant Markov Chain Monte Carlo (MCMC) sampler using the Python package} \texttt{emcee} \cite{emcee}.}
    \label{fig:num_darcy_lidl}
\end{figure}

\section{Conclusion and Outlook}
\label{sec:conclusion}

This work devises a strategy which can significantly reduce the computational effort in state-of-the-art ensemble samplers based on Langevin dynamics to solve Bayesian inference problems. 
A significant improvement when compared to previous techniques is achievement by the notions of intermediate ensemble enrichment and $\mathcal{W}_2$-stable homotopy maps. 

To increase the number of particles during propagation, several ensemble enrichment strategies were introduced.
These are designed with the underlying goal to preserve the distance of the ensemble distribution to the target measure.
The dynamical system is then restarted with the new set of particles as initial condition. 

The concept of homotopy was introduced to enhance (or sometimes to enable at all) the convergence to more involved target distributions.
Moreover, this framework allows to use standard estimators for covariance matrices within the dynamical system.
This is advantageous in comparison to weighted estimators as discussed in \cite{reich2021fokker}, allowing for very cheap computations of a (generalized) square root of the covariance that enters the diffusion part of the underlying SDE. 

A convergence analysis was carried out for both concepts.
In the numerical examples, the efficacy of the method was demonstrated for unimodal and multimodal posterior distributions with Gaussian tails.
Additionally, a high-dimensional and non-linear PDE problem given by the random Darcy equation was investigated.

As an outlook, we suggest two promising directions for future work:
\begin{itemize}
    \item $(t_\ell,b_\ell,\Psi,s(t))$\textbf{-adaptivity of the Sampler.} We have only presented simple heuristics for adaptivity with respect to the enrichment times $t_\ell$ in \eqref{eq:num_diff_approx} and \eqref{eq:slope_approx_average}.
    More sophisticated strategies, involving the enrichment batch sizes $b_{\ell}$ as well as adaptive adjustments of the surrogate potential $\Psi$ and homotopy switch $s(t)$ are desirable.
    While this paper proves the general feasibility of the ensemble enrichment and homotopy approaches, the question of \textit{when} to add \textit{how many} samples to facilitate fast convergence is an open research problem.
    \item \textbf{Hierarchy of approximate potentials.} The homotopy approach is based on the idea to subsequently increase the complexity of the potential, here interpreted as a preconditioning tool.
    In reality, the posterior potential $\Phi$ is not available but instead only some approximation $\Phi_h$ with $\Phi_h\to\Phi$ as $h\to0$ can be used.
    By introducing a monotone decreasing function $h = h(t)$ representing the approximation error over time, we may write the inhomogeneous drift term as 
    \begin{equation*}
        b(t, Y_t) = b(\mathcal{H}(s(t), h(t)), Y_t)), 
    \end{equation*}
    where now $\mathcal{H}(s,h)$ is the interpolation at switch point $s$ between an auxiliary potential and $\Phi_h$.
    A suitable design of $t\to (s(t),h(t))$ then potentially leads to a significantly decreased workload for reaching a desired accuracy threshold. 
\end{itemize}

\section*{Acknowledgements}
\ownThanks

\ifnum\classstyle=0
\bibliographystyle{abbrv}
\fi
\ifnum\classstyle=1
\bibliographystyle{abbrv}
\fi
\ifnum\classstyle=2
\bibliographystyle{abbrv}
\fi
\bibliography{ref}

\newpage
\appendix

\newcommand{\e}{\mathbf{e}}
\newcommand{\bu}{\mathbf{u}}
\newcommand{\y}{\mathbf{y}}

\section{Proof of theorem \ref{thm:B-T-convergence_ALDI}}\label{sec:proof_btconv}

The proof follows closely the proof of a comparable result for the Ensemble Kalman Sampler shown in \cite[Theorem 3.1]{DingLi21}. In the linear case, the dynamics of ALDI \eqref{eq:aldi}
become
\ifnum\classstyle=0 
{\footnotesize
   \begin{equation}
    \begin{aligned}\label{eq:lin_aldi}
    \mathrm{d}y^i_t &= -\mathrm{Cov}_{y_t,\mathcal{G}_t}\Gamma^{-1}(\mathcal{G}(y_t^i)-\tilde{\Delta})\mathrm{d}t - \mathrm{Cov}_{y_t}\Gamma_0^{-1}(y_t^i-y_0)\mathrm{d}t + \dfrac{D+1}{B}(y_t^i-\overline{y}_t)\mathrm{d}t + \sqrt{2\mathrm{Cov}_{y_t}}\mathrm{d}W_t^i \\
    &= -\mathrm{Cov}_{y_t}P(y_t^i - y^*)\mathrm{d}t + \dfrac{D+1}{B}(y_t^i-\overline{y}_t)\mathrm{d}t + \sqrt{2\mathrm{Cov}_{y_t}}\mathrm{d}W_t^i, \quad i=1,\ldots,B.
    \end{aligned}
\end{equation} 
}
\fi
\ifnum\classstyle=1 
   \begin{equation}
    \begin{aligned}\label{eq:lin_aldi}
    \mathrm{d}y^i_t &= -\mathrm{Cov}_{y_t,\mathcal{G}_t}\Gamma^{-1}(\mathcal{G}(y_t^i)-\tilde{\Delta})\mathrm{d}t - \mathrm{Cov}_{y_t}\Gamma_0^{-1}(y_t^i-y_0)\mathrm{d}t + \dfrac{D+1}{B}(y_t^i-\overline{y}_t)\mathrm{d}t + \sqrt{2\mathrm{Cov}_{y_t}}\mathrm{d}W_t^i \\
    &= -\mathrm{Cov}_{y_t}P(y_t^i - y^*)\mathrm{d}t + \dfrac{D+1}{B}(y_t^i-\overline{y}_t)\mathrm{d}t + \sqrt{2\mathrm{Cov}_{y_t}}\mathrm{d}W_t^i, \quad i=1,\ldots,B.
    \end{aligned}
\end{equation} 
\fi
\ifnum\classstyle=2 
\begin{small}
   \begin{equation}
    \begin{aligned}\label{eq:lin_aldi}
    \mathrm{d}y^i_t &= -\mathrm{Cov}_{y_t,\mathcal{G}_t}\Gamma^{-1}(\mathcal{G}(y_t^i)-\tilde{\Delta})\mathrm{d}t - \mathrm{Cov}_{y_t}\Gamma_0^{-1}(y_t^i-y_0)\mathrm{d}t + \dfrac{D+1}{B}(y_t^i-\overline{y}_t)\mathrm{d}t + \sqrt{2\mathrm{Cov}_{y_t}}\mathrm{d}W_t^i \\
    &= -\mathrm{Cov}_{y_t}P(y_t^i - y^*)\mathrm{d}t + \dfrac{D+1}{B}(y_t^i-\overline{y}_t)\mathrm{d}t + \sqrt{2\mathrm{Cov}_{y_t}}\mathrm{d}W_t^i, \quad i=1,\ldots,B.
    \end{aligned}
\end{equation} 
\end{small}
\fi
Here, we adopt the notation from \cite{DingLi21} for covariance matrices, i.e.
\begin{equation*}
\begin{aligned}
    \mathrm{Cov}_{x_t,y_t} &= \dfrac{1}{B}\sum_{i=1}^B (x_t^i-\overline{x}_t)(y_t^i-\overline{y}_t)^{\intercal}, \quad \mathrm{Cov}_{y_t} = \mathrm{Cov}_{y_t,y_t} \\
    \mathrm{Cov}_{\rho} &= \mathbb{E}_{x\sim \rho}[(x-\mathbb{E}_{\rho})(x-\mathbb{E}_{\rho})^{\intercal}], \\ \mathrm{Cov}_{y_t,\mathcal{G}_t} &= \mathrm{Cov}_{y_t}A^{\intercal},
\end{aligned}
\end{equation*}
for particle ensembles $\{x_t^i\}^B_{i=1}, \{y_t^i\}^B_{i=1}$ and densities $\rho$. For better readability, we use this form for the rest of the Appendix. We will frequently make use of It\^o's lemma, stating that for any twice differentiable function $f$ of an It\^o diffusion process $X_t$, we have
\ifnum\classstyle=0 
{\footnotesize
   \begin{equation}\label{eq:ito_lemma}
    \mathrm{d}X_t = \mu_t\mathrm{d}t + \sigma_t\mathrm{d}W_t \quad \Longrightarrow\quad \mathrm{d}f(X_t) = \left[ (\nabla f)^{\intercal}\mu_t + \dfrac{1}{2}\mathrm{Tr}[\sigma_t^{\intercal}\mathrm{Hess}_{f}\sigma_t] \right] \mathrm{d}t + (\nabla f)^{\intercal}\sigma_t \mathrm{d}W_t
\end{equation}
}
\fi
\ifnum\classstyle=1 
   \begin{equation}\label{eq:ito_lemma}
    \mathrm{d}X_t = \mu_t\mathrm{d}t + \sigma_t\mathrm{d}W_t \quad \Longrightarrow\quad \mathrm{d}f(X_t) = \left[ (\nabla f)^{\intercal}\mu_t + \dfrac{1}{2}\mathrm{Tr}[\sigma_t^{\intercal}\mathrm{Hess}_{f}\sigma_t] \right] \mathrm{d}t + (\nabla f)^{\intercal}\sigma_t \mathrm{d}W_t
\end{equation}
\fi
\ifnum\classstyle=2 
\begin{small}
   \begin{equation}\label{eq:ito_lemma}
    \mathrm{d}X_t = \mu_t\mathrm{d}t + \sigma_t\mathrm{d}W_t \quad \Longrightarrow\quad \mathrm{d}f(X_t) = \left[ (\nabla f)^{\intercal}\mu_t + \dfrac{1}{2}\mathrm{Tr}[\sigma_t^{\intercal}\mathrm{Hess}_{f}\sigma_t] \right] \mathrm{d}t + (\nabla f)^{\intercal}\sigma_t \mathrm{d}W_t
\end{equation}
\end{small}
\fi

When considering Wasserstein distances between measures, we will often identify a measure with its density, writing, for example, $\mathcal{W}_2(\hat{\mu}_{t}^B,\pi_\ast)$ instead of $\mathcal{W}_2(\hat{\mu}_{t}^B,\mu_\ast)$.

Before getting into details we will briefly sketch the main ingredients of the proof.

\subsection{Roadmap of the Proof}

The central idea is a triangle argument
\begin{equation}\label{eq:triangle}
    \mathbb{E}(\mathcal{W}_2(\pi_\ast,\hat{\mu}_{t}^B)) \leq \underbrace{\mathcal{W}_2(\pi_{\ast},\pi(t))}_{\mathrm{(I)}} + \underbrace{\mathbb{E}(\mathcal{W}_2(\pi(t),\hat{\nu}_{t}^B)}_{\mathrm{(II)}} + \underbrace{\mathbb{E}(\mathcal{W}_2(\hat{\nu}_{t}^B,\hat{\mu}_{t}^B))}_{\mathrm{(III)}},
\end{equation}
where $\pi(t) \equiv \pi(\cdot,t)$ is the solution of the limiting Fokker-Planck equation
\begin{equation}\label{eq:fp}
\begin{aligned}
    &\partial_t \pi = \nabla \cdot (\pi \text{Cov}_{\pi(t)}\nabla\Phi(y)) + \mathrm{Tr}(\text{Cov}_{\pi(t)}D^2\pi), \\
    &\pi(\cdot,0) = \pi_0(\cdot),
\end{aligned}
\end{equation}
and $\hat{\nu}_t^{B}$ is the ensemble distribution induced by the solution $\{ z^i_t \}_{i=1}^B$ of the process
\begin{equation}
\begin{aligned}
    \label{eq:ideal_sde}
    \mathrm{d}z^i_t &= -\mathrm{Cov}_{\pi(t)}\nabla\Phi(z^i_t)\mathrm{d}t + \sqrt{2\mathrm{Cov}_{\pi(t)}} \mathrm{d}W_t^i\\
    &= -\mathrm{Cov}_{\pi(t)}P(z_t^i - y^*)\mathrm{d}t  + \sqrt{2\mathrm{Cov}_{\pi(t)}}\mathrm{d}W_t^i, \qquad i = 1,\ldots, B.
\end{aligned}
\end{equation}
An intuition regarding the three terms can be given as follows. (I) concerns the distance of the posterior to the solution $\pi(t)$ of the limiting Fokker Planck equation. Since \eqref{eq:ideal_sde} produces i.i.d. samples of $\pi(t)$ at time $t$, (II) concerns the distance between $\pi$ and empirical measures of $B$ samples each, drawn from $\pi$. Finally, (III) measures the distance of the ensemble distributions defined by the ALDI right hand side to the ideal (but unavailable) ensembles defined by \eqref{eq:ideal_sde}.  
It remains to be shown that, given $\delta > 0$, we find suitable $T_{\delta}$ and $B_{T_{\delta}}$ such that the right hand side in \eqref{eq:triangle} is bounded by $\delta$. The arguments bounding the first two terms can already be found in \cite{DingLi21} and are repeated here for the sake of completeness. 

\subsection{Bounding (I)}

This term is independent of the particle ensemble $\hat{\mu}_t^{B}$ and hence of the number of particles $B$. Its behaviour is governed by the solution $\pi(t)$ of the underlying Fokker-Planck equation \eqref{eq:fp}. As an immediate consequence of the following result, the $\mathcal{W}_2$-distance between $\pi(t)$ and the posterior decays to $0$ exponentially fast. 

\begin{theorem}[Proposition 3.8 in \cite{Carrillo_2021}]\label{thm:carrillo}
    
     Let $\rho_1$ and $\rho_2$ be two solutions of the nonlinear nonlocal mean field
equation 
\begin{equation}\label{eq:carrillo_fp}
    \partial_t \rho = \nabla\left( \mathrm{Cov}_{\rho(t)} (\nabla \Phi(y)\rho + \sigma\nabla\rho) \right)
\end{equation}
with linear forward model $\mathcal{G}$. Let the covariances and expectations of the initial conditions $\rho_1(0)$ and $\rho_2(0)$ satisfy
\begin{equation}
    \begin{aligned}\label{eq:carrillo_conditions}
    &|\mathrm{Cov}_{\rho_1(0)}|_2, |\mathrm{Cov}_{\rho_2(0)}|_2, |B^{-1}|_2 \leq M, \\ 
    &|\mathrm{Cov}_{\rho_1(0)}^{-1}|_2, |\mathrm{Cov}_{\rho_2(0)}^{-1}|_2, |B|_2 \leq m, \\
    &|\mathbb{E}_{\rho_1(0)}|_2, |\mathbb{E}_{\rho_2(0)}|_2 \leq R
\end{aligned}
\end{equation}
for some constants $M,m,R>0$.
Then it holds
that
\begin{equation}\label{eq:carrillo_decay}
    W_2(\rho_1(t) , \rho_2(t) ) \leq C (1 + m^4M^4 + m^4M^{7/2}R) \dfrac{e^{-\sigma t}}{\sqrt{\alpha(t)}^{1+\lfloor 1 \wedge
    \sigma\rfloor}}W_2(\rho_1(0),\rho_2(0)),
\end{equation}
where $C$ is a constant only dependent on the dimension $d$ and
\begin{equation*}
    \alpha(t)=
\begin{cases}
2t+2, \quad & \sigma=0,\\
\sigma^{-1}(1-e^{-2\sigma t})+ e^{-2\sigma t}, \quad & \sigma >0.
\end{cases}
\end{equation*}
    
\end{theorem}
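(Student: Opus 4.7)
The plan is to exploit that for linear $\mathcal{G}$ the potential is quadratic, $\nabla\Phi(y) = P(y - y^{\ast})$ with $P = A^{\intercal}\Gamma^{-1}A + \Gamma_0^{-1}$, so that each solution $\rho_i$ admits a mean-field SDE representation $dX_t^i = -\mathrm{Cov}_{\rho_i(t)}P(X_t^i - y^{\ast})\,dt + \sqrt{2\sigma\,\mathrm{Cov}_{\rho_i(t)}}\,dW_t^i$. I would then build a synchronous coupling $(X_t, Y_t)$ of $\rho_1(t), \rho_2(t)$ driven by the same Brownian motion and initialized by an optimal $\mathcal{W}_2$-coupling of the initial data. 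Since this specific coupling gives the bound $\mathcal{W}_2^2(\rho_1(t), \rho_2(t)) \leq \mathbb{E}|X_t - Y_t|^2$, the problem reduces to a Gr\"onwall estimate on the latter.

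Applying It\^o's formula to $|X_t - Y_t|^2$ and using the algebraic splitting $\mathrm{Cov}_{\rho_1}P(X_t - y^{\ast}) - \mathrm{Cov}_{\rho_2}P(Y_t - y^{\ast}) = \mathrm{Cov}_{\rho_1}P(X_t - Y_t) + (\mathrm{Cov}_{\rho_1} - \mathrm{Cov}_{\rho_2})P(Y_t - y^{\ast})$, one obtains an ODE for $\mathbb{E}|X_t-Y_t|^2$ whose right-hand side decomposes into a strict contraction term $-2\mathbb{E}[(X_t-Y_t)^{\intercal}\mathrm{Cov}_{\rho_1(t)}P(X_t-Y_t)]$, a cross term controlled by $|\mathrm{Cov}_{\rho_1(t)} - \mathrm{Cov}_{\rho_2(t)}|\cdot\mathbb{E}[|Y_t - y^{\ast}|\,|X_t-Y_t|]$, and a diffusion mismatch $2\sigma\,\mathrm{Tr}((\sqrt{\mathrm{Cov}_{\rho_1(t)}} - \sqrt{\mathrm{Cov}_{\rho_2(t)}})^2)$. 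The first term already produces the dissipation rate $\sim \sigma$ once the covariances are shown to stay bounded away from zero; the remaining two terms require matrix-valued control of $C_i(t) := \mathrm{Cov}_{\rho_i(t)}$.

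To that end I would analyse the moment dynamics separately. Taking expectation in the SDE gives the linear ODE $\dot m = -CP(m - y^{\ast})$ for the mean, while a direct computation yields the matrix Riccati equation $\dot C = -2CPC + 2\sigma C$ for the covariance. The latter is exactly solvable after the substitution $\Xi := C^{-1}$, which transforms it into the linear equation $\dot\Xi = 2P - 2\sigma\Xi$; a Loewner-order comparison then delivers uniform-in-time control of $\lambda_{\min}(C_i(t))$ and $|C_i(t)|$ in terms of $m, M, R$, exponential convergence of $C_i(t)$ to its stationary value when $\sigma > 0$, and $(2t+2)^{-1}$ decay when $\sigma = 0$. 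Combined with the Ando--Hemmen inequality $|\sqrt{C_1} - \sqrt{C_2}| \leq (\lambda_{\min}(C_1)^{1/2} + \lambda_{\min}(C_2)^{1/2})^{-1}|C_1 - C_2|$, this allows one to estimate both the cross term and the diffusion mismatch by a constant multiple of $\mathcal{W}_2(\rho_1(t),\rho_2(t))$.

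Assembling these estimates I expect a differential inequality of the schematic form $\tfrac{d}{dt}\mathbb{E}|X_t-Y_t|^2 \leq -2\sigma\,\mathbb{E}|X_t-Y_t|^2 + K(m,M,R)\,\alpha(t)^{-(1\wedge\sigma)}\,\mathbb{E}|X_t-Y_t|^2$, where $\alpha(t)^{-1}$ records the degenerate regime $\sigma = 0$ in which the $\mathrm{Cov}\cdot P$ term must carry the contraction alone, and $K$ absorbs all Lipschitz constants; tracking its polynomial dependence on the initial bounds through the Riccati flow is what generates the explicit prefactor $1 + m^4M^4 + m^4M^{7/2}R$. Integrating the inequality yields exactly \eqref{eq:carrillo_decay}, with the exponent $1 + \lfloor 1 \wedge \sigma\rfloor$ on $\sqrt{\alpha(t)}$ arising because for $\sigma \geq 1$ the intrinsic diffusive contraction dominates whereas for $\sigma < 1$ the Riccati correction contributes an extra half-power. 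The principal obstacle is precisely closing the Gr\"onwall loop: the cross term forces one to bound the covariance mismatch by $\mathcal{W}_2(\rho_1,\rho_2)$ itself, which hinges on uniform-in-time lower bounds for $\lambda_{\min}(C_i(t))$ that must be extracted from the Riccati comparison with explicit polynomial $(m,M,R)$-dependence, rather than from any soft compactness argument.
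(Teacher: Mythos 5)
The paper itself offers no proof of this statement: Theorem~\ref{thm:carrillo} is quoted verbatim as Proposition~3.8 of \cite{Carrillo_2021} and is used as an external ingredient in the triangle argument \eqref{eq:triangle}, with the text passing directly from the theorem statement to its application in bounding term (I). There is therefore no ``paper's own proof'' to compare against; your task here amounts to reconstructing the argument of the cited reference.

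That said, your proposed route is essentially sound and is in the same family as the standard proof for this class of mean-field Fokker--Planck equations. The synchronous coupling driven by a common Brownian motion, the It\^o expansion of $|X_t-Y_t|^2$ with the drift split into a contractive piece and a covariance-mismatch piece, the closed Riccati ODE $\dot C = -2CPC + 2\sigma C$ linearized by $\Xi = C^{-1}$, the Loewner-order comparison to pin down $\lambda_{\min}(C_i(t))$, and the Ando--Hemmen inequality to handle $\sqrt{C_1}-\sqrt{C_2}$ are all the right ingredients, and your discussion of why the prefactor picks up the polynomial $(m,M,R)$-dependence and why the power of $\sqrt{\alpha(t)}$ depends on $\lfloor 1\wedge\sigma\rfloor$ is qualitatively correct.

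One place where the write-up conflates two things, and where a naive execution would actually stall, is the ``Gr\"onwall loop'' you flag at the end. You frame the obstacle as having to bound $|\mathrm{Cov}_{\rho_1(t)}-\mathrm{Cov}_{\rho_2(t)}|$ by $\mathcal{W}_2(\rho_1(t),\rho_2(t))$ so as to close a self-referential inequality. That is not the mechanism one should use, and it is not how the reference closes the estimate. The decisive structural fact — which you state earlier but do not fully cash in — is that the covariance dynamics $\dot C_i = -2C_iPC_i + 2\sigma C_i$ are an \emph{autonomous, closed} system, completely decoupled from the remainder of the density evolution. Hence $|C_1(t)-C_2(t)|$ should be estimated directly from $|C_1(0)-C_2(0)|$ through the linearized flow $\dot\Xi = 2P - 2\sigma\Xi$, with explicit polynomial dependence on $m,M,R$, and only then related to the \emph{initial} distance $\mathcal{W}_2(\rho_1(0),\rho_2(0))$ via the assumed moment bounds. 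Plugging this back into the coupling inequality turns the differential inequality for $\mathbb{E}|X_t-Y_t|^2$ into a \emph{linear} one with known time-dependent coefficients, which integrates without any bootstrapping. Bounding the covariance mismatch at time $t$ by the Wasserstein distance at time $t$, as you propose, requires moment control that is itself time-dependent and would not obviously yield the sharp $e^{-\sigma t}/\sqrt{\alpha(t)}^{1+\lfloor 1\wedge\sigma\rfloor}$ rate, particularly in the degenerate $\sigma=0$ regime where there is no exponential contraction to spare.
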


The Fokker Planck equation \eqref{eq:carrillo_fp} becomes \eqref{eq:fp} for $\sigma=1$. Now, setting $\rho_1(0) = \pi_0$ and $\rho_2(0) = \pi_{\ast}$, \eqref{eq:carrillo_decay} yields (I)$\rightarrow 0$ exponentially fast with $t\rightarrow \infty$.

\subsection{Bounding (II)}

This term is again independent of the dynamics used to sample $\hat{\mu}_t^B$ and depends only on the rate of convergence of the ensembles $\hat{\nu}_t^B$ defined by \eqref{eq:ideal_sde} to their continuous limit \eqref{eq:fp}.  We cite the corresponding result from \cite{DingLi21}.

\begin{theorem}[Proposition 5.1 in \cite{DingLi21}]
    Let $\pi$ solve the Fokker-Planck equation \eqref{eq:fp} and let $\{ z_t^i \}_{i=1}^B$ solve \eqref{eq:ideal_sde} with initial data $\{ z_{t=0}^i \}_{i=1}^B$ drawn i.i.d. from $\pi_0 \in \mathcal{C}^2$ with finite higher moments. Let $\hat{\nu}_t^B$ be the ensemble distribution defined by $\{z_t^i\}$, then for any $t>0$ and $0 < \epsilon < 1/2$, there exists a constant $C$, depending on $t$, the dimension $D$ and $\epsilon$ but not on $B$ such that
    \begin{equation}
        \begin{aligned}
        \mathbb{E}(\mathcal{W}_2(\hat{\nu}_t^B,\pi(t))) \leq C \begin{cases}
    B^{-1/2+\epsilon},& \quad D\leq 4,\\
    B^{-2/D},& \quad D > 4.
\end{cases}
    \end{aligned}
    \end{equation}
\end{theorem}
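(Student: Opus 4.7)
The first, crucial observation is that the idealized process \eqref{eq:ideal_sde} decouples the $B$ particles: the drift coefficient $-\mathrm{Cov}_{\pi(t)}P(\cdot - y^\ast)$ and diffusion coefficient $\sqrt{2\mathrm{Cov}_{\pi(t)}}$ depend only on the deterministic time-marginal $\pi(t)$ of the limiting Fokker-Planck equation \eqref{eq:fp}, not on any other particle. Hence $\{z_t^i\}_{i=1}^B$ are $B$ i.i.d. copies of the single-particle SDE
\begin{equation*}
\mathrm{d}z_t = -\mathrm{Cov}_{\pi(t)}P(z_t - y^\ast)\,\mathrm{d}t + \sqrt{2\mathrm{Cov}_{\pi(t)}}\,\mathrm{d}W_t, \qquad z_0 \sim \pi_0,
\end{equation*}
whose time-$t$ law, by uniqueness of solutions to \eqref{eq:fp} with datum $\pi_0$, coincides with $\pi(t)$. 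Thus $\hat{\nu}_t^B$ is simply the empirical measure of $B$ i.i.d.\ samples from $\pi(t)$.

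Once this reduction is in hand, I would finish by a direct application of the Fournier--Guillin theorem on the rate of convergence of empirical measures in Wasserstein distance: for $\mu \in \mathcal{D}_2(\mathbb{R}^D)$ with finite moment of sufficiently high order $q = q(\epsilon,D)$,
\begin{equation*}
\mathbb{E}\bigl[\mathcal{W}_2(\hat{\mu}_B,\mu)\bigr] \leq c(D,q,\epsilon)\, M_q(\mu)^{1/q}
\begin{cases}
B^{-1/2+\epsilon}, & D \leq 4, \\
B^{-2/D}, & D > 4,
\end{cases}
\end{equation*}
where $M_q(\mu) = \int |x|^q \mathrm{d}\mu(x)$. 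Applying this with $\mu = \pi(t)$ yields the stated bound, provided one controls $M_q(\pi(t))$. The small loss $\epsilon > 0$ in the low-dimensional regime comes from the Fournier--Guillin requirement that $q$ be strictly above a critical threshold, together with passing from the $\mathcal{W}_2^2$ rate to $\mathcal{W}_2$ by Jensen.

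The main technical obstacle, and the step I expect to take the most care, is the a priori moment bound on $\pi(t)$. Because $\pi(t)$ is the marginal law of the single-particle SDE above, I would apply It\^o's formula as in \eqref{eq:ito_lemma} to $|z_t - y^\ast|^{2q}$, take expectations (killing the martingale part), and bound the drift and trace terms by quantities controlled by the operator norm $\|\mathrm{Cov}_{\pi(t)}\|$. This produces a Gr\"onwall-type differential inequality for $m_q(t) := \mathbb{E}|z_t - y^\ast|^{2q}$ of the schematic form $m_q'(t) \leq C\|\mathrm{Cov}_{\pi(t)}\|\bigl(1 + m_q(t)\bigr)$, giving $M_q(\pi(t)) \leq C(t,q,D) < \infty$ with a constant that is at worst exponential in $t$. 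The subtlety is controlling $\|\mathrm{Cov}_{\pi(t)}\|$ uniformly on $[0,t]$: one can either close this self-consistently (since $\mathrm{Cov}_{\pi(t)}$ is itself a second-moment functional of $\pi(t)$ and thus majorised by $M_2(\pi(t))$, yielding a closed system for $m_2(t)$ that is then fed back into the $m_q$ estimate), or invoke Theorem~\ref{thm:carrillo} to conclude that $\mathcal{W}_2(\pi(t),\pi_\ast)$ stays bounded and hence so does the covariance. Either route delivers the desired $t$-dependent but $B$-independent constant, and the claim follows by combining this moment bound with the Fournier--Guillin estimate.
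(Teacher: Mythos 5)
Your proof follows the same route as the paper's (which is essentially a two-line citation): you observe that the drift and diffusion of the auxiliary system \eqref{eq:ideal_sde} depend only on the deterministic $\pi(t)$, so the $\{z_t^i\}$ are $B$ i.i.d.\ samples from $\pi(t)$, then invoke the Fournier--Guillin estimate (Theorem \ref{thm:conv_emp_measure}) together with higher-moment bounds on $\pi(t)$ (the paper outsources these to Proposition 5.3 of \cite{DingLi21}; your It\^o/Gr\"onwall sketch is what that proposition does). One small inaccuracy worth flagging: the $\epsilon$ loss in the regime $D\le 4$ does not come from a Jensen passage $\mathcal{W}_2^2 \to \mathcal{W}_2$ (that would halve the exponent and give $B^{-1/4}$, not $B^{-1/2+\epsilon}$); it is there to absorb the $\log(1+B)$ factor arising at the boundary case $p=D/2$, i.e.\ $D=4$, in the Fournier--Guillin bound.
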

\begin{proof}
The proof uses Theorem \ref{thm:conv_emp_measure} and the boundedness of the higher moments of $\hat{\nu}_t^B$ (see Proposition 5.3 in \cite{DingLi21}).
\end{proof}

\subsection{Bounding (III)}

This is the main part of the proof, where we show that the results from \cite{DingLi21} for the EKS carry over for the modified dynamics defined by ALDI. For each result, we will cite the corresponding result for the EKS, so that the reader may compare. We start with the main result, yielding the required boundedness.

\begin{theorem}[Compare Proposition 5.2 in \cite{DingLi21}]\label{thm:ensemble_distance}
    Let $\{ y_t^i \}_{i=1}^B$ be the solution of \eqref{eq:lin_aldi} and $\{ z_t^i \}_{i=1}^B$ solve \eqref{eq:ideal_sde}, where $\{z_0^i\}_{i=1}^B = \{y_0^i\}_{i=1}^B$ are drawn i.i.d. from the distribution induced by $\pi_0 \in C^2$ with finite high moments. Let $\hat{\nu}_{t}^{B}$ and $\hat{\mu}_t^{B}$ denote the ensemble distributions defined by $\{z_t^i\}$ and $\{y_t^i\}$ respectively. Suppose \eqref{eq:ev_condition} holds true. Then for any $0 < \epsilon < 1/2$, there exists a constant $C$ depending only on $D, T$ and $\epsilon$ such that
    \begin{equation}
        \mathbb{E}(\mathcal{W}_2(\hat{\nu}_T^B,\hat{\mu}_{T}^B)) \leq \left( \dfrac{1}{B} \sum_{i=1}^B \mathbb{E}|z_T^i - y_T^i|^2 \right)^{1/2} \leq C B^{-1/2+\epsilon}.
    \end{equation}
\end{theorem}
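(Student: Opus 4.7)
The first inequality in the statement is the standard synchronous coupling bound: the discrete measure $\pi = \frac{1}{B}\sum_{i=1}^B \delta_{(y_T^i,z_T^i)}$ has the correct marginals $\hat{\mu}_T^B$ and $\hat{\nu}_T^B$, giving $\mathcal{W}_2^2(\hat{\mu}_T^B,\hat{\nu}_T^B) \leq \frac{1}{B}\sum_i |y_T^i-z_T^i|^2$ pointwise; Jensen's inequality applied to the square root then yields the stated form after taking expectations. All that remains is to prove
$$\frac{1}{B}\sum_{i=1}^B \mathbb{E}|e_T^i|^2 \leq C(T,D,\epsilon)\,B^{-1+2\epsilon}, \qquad e_t^i := y_t^i-z_t^i,$$
which is the ALDI counterpart to Proposition~5.2 in \cite{DingLi21}, with the added obstacle of the finite-ensemble correction $\frac{D+1}{B}(y_t^i-\overline{y}_t)$ present in \eqref{eq:lin_aldi} but absent in \eqref{eq:ideal_sde}.

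The plan is to subtract \eqref{eq:ideal_sde} from \eqref{eq:lin_aldi}, decompose the drift difference as
$$-\mathrm{Cov}_{y_t}P(y_t^i-y^\ast) + \mathrm{Cov}_{\pi(t)}P(z_t^i-y^\ast) = -\mathrm{Cov}_{\pi(t)}P\,e_t^i + \bigl(\mathrm{Cov}_{\pi(t)}-\mathrm{Cov}_{y_t}\bigr)P(y_t^i-y^\ast),$$
and to track $\|e_t^i\|_P^2 := (e_t^i)^T P\,e_t^i$ via It\^o's formula \eqref{eq:ito_lemma}. The $P$-weighted norm is the natural choice because $(e_t^i)^T P \bigl(-\mathrm{Cov}_{\pi(t)}P\,e_t^i\bigr) = -(P e_t^i)^T \mathrm{Cov}_{\pi(t)}(P e_t^i)$ is manifestly nonpositive, yielding dissipation of magnitude bounded below by $\lambda_{\min}(\mathrm{Cov}_{\pi(t)})\lambda_{\min}(P)\|e_t^i\|_P^2$; this is precisely the structural gain that the eigenvalue hypothesis \eqref{eq:ev_condition} secures. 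After summing over $i$, averaging, and taking expectations, the martingale contribution vanishes and three perturbative terms remain: (i) the covariance fluctuation $(\mathrm{Cov}_{\pi(t)}-\mathrm{Cov}_{y_t})P(y_t^i-y^\ast)$; (ii) the ALDI correction $\frac{D+1}{B}(y_t^i-\overline{y}_t)$; (iii) the diffusion cross-term $\mathrm{Tr}\bigl[P(\sqrt{2\mathrm{Cov}_{y_t}}-\sqrt{2\mathrm{Cov}_{\pi(t)}})^2\bigr]$ arising from the second-order part of It\^o.

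For (i), Cauchy--Schwarz reduces the contribution to a product of $\mathbb{E}\|\mathrm{Cov}_{y_t}-\mathrm{Cov}_{\pi(t)}\|^2$, which concentrates at rate $O(B^{-1+2\epsilon})$ under uniform fourth-moment control of $\{y_t^i\}$, and of $\mathbb{E}|y_t^i-y^\ast|^2 = O(1)$. Term (ii) contributes $O(B^{-2})$ after squaring, comfortably absorbed into the target rate. For (iii) the Ando--Hemmen inequality, applicable precisely because of \eqref{eq:ev_condition} as in the proof of Lemma~5.4 in \cite{DingLi21}, gives
$$\bigl\|\sqrt{\mathrm{Cov}_{y_t}}-\sqrt{\mathrm{Cov}_{\pi(t)}}\bigr\| \leq \frac{\|\mathrm{Cov}_{y_t}-\mathrm{Cov}_{\pi(t)}\|}{\lambda_{\min}(\mathrm{Cov}_{y_t})^{1/2}+\lambda_{\min}(\mathrm{Cov}_{\pi(t)})^{1/2}},$$
reducing (iii) once more to the covariance-concentration estimate from (i). Assembled, these bounds produce a differential inequality $\frac{d}{dt}\frac{1}{B}\sum_i\mathbb{E}\|e_t^i\|_P^2 \leq \alpha(t)\frac{1}{B}\sum_i\mathbb{E}\|e_t^i\|_P^2 + \beta(t)B^{-1+2\epsilon}$ on $[0,T]$ with $\alpha,\beta$ bounded, and Gr\"onwall finishes the proof since $e_0^i = 0$, upon which the norm equivalence $\lambda_{\min}(P)|e|^2 \leq \|e\|_P^2 \leq \lambda_{\max}(P)|e|^2$ returns one to the unweighted bound. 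The chief obstacle is transferring the moment-propagation lemma (\cite[Prop.~5.3]{DingLi21}) to ALDI: one must verify that the extra drift $\frac{D+1}{B}(y_t^i-\overline{y}_t)$, being affine in the ensemble with coefficient $O(1/B)$, merely inflates the exponential-in-$T$ constant without altering the uniform-in-$B$ control of the higher moments on which the covariance-concentration rate relies.
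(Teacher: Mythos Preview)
Your synchronous-coupling bound for the first inequality is fine and matches the paper. The gap is in step (i): you assert that $\mathbb{E}\|\mathrm{Cov}_{y_t}-\mathrm{Cov}_{\pi(t)}\|^2$ concentrates at rate $O(B^{-1+2\epsilon})$ under uniform fourth-moment control of $\{y_t^i\}$. This is circular. The $y_t^i$ are \emph{interacting} particles, not i.i.d.\ samples from $\pi(t)$; establishing that their empirical covariance is close to $\mathrm{Cov}_{\pi(t)}$ is precisely the propagation-of-chaos statement you are trying to prove. Splitting $\mathrm{Cov}_{y_t}-\mathrm{Cov}_{\pi(t)}=(\mathrm{Cov}_{y_t}-\mathrm{Cov}_{z_t})+(\mathrm{Cov}_{z_t}-\mathrm{Cov}_{\pi(t)})$ helps only with the second piece (the $z_t^i$ are i.i.d., so this is $O(B^{-1})$). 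The first piece is quadratic in the centered errors $p_t^i:=e_t^i-\bar e_t$ and cross-linear in $p_t^i,q_t^i$; bounding it via Cauchy--Schwarz and fourth moments produces only an $O(1)$ estimate, so a direct Gr\"onwall inequality on $\frac{1}{B}\sum\mathbb{E}\|e_t^i\|_P^2$ yields no decay in $B$.

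The paper avoids this by the Ding--Li bootstrap (Lemmas~\ref{lem:5.3}--\ref{lem:5.5}): one separately tracks the evolution of $\mathbb{E}|p_t^1|^2$ and $\mathbb{E}|x_t^1|^2$ (your $e_t^1$), proving that an a~priori bound $\mathbb{E}|x_t^1|^2\le\hat CB^{-\alpha}$ implies the improved bound $\tilde CB^{-1/2-\alpha/2+\epsilon}$ for both, and then iterates $\alpha_{n+1}=1/2+\alpha_n/2-\epsilon\to 1-2\epsilon$. The mechanism that makes Lemma~\ref{lem:5.4} work is not the dissipation you describe for $\|e_t^i\|_P^2$; rather, in the evolution of $\mathbb{E}|p_t^1|^2$ the drift produces a \emph{negative} term $-\lambda_{\min}(P)\,\mathbb{E}\|\mathrm{Cov}_{y_t}-\mathrm{Cov}_{z_t}\|_F^2$ that exactly absorbs the Ando--Hemmen bound $\lambda_0(t)^{-1}\,\mathbb{E}\|\mathrm{Cov}_{y_t}-\mathrm{Cov}_{z_t}\|_F^2$ on the diffusion trace---this is the actual role of the eigenvalue hypothesis~\eqref{eq:ev_condition}. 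Your treatment of the ALDI correction term as an $O(B^{-1})$ perturbation is correct and is indeed how the paper handles it, but that is the easy part; the bootstrap structure, not moment propagation, is the chief obstacle.
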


The assertion results from the combination of several lemmas. First, we define 
\begin{equation*}
    x_t^i = y_t^i - z_t^i \qquad \text{and} \qquad p_t^i = x_t^i - \overline{x}_t, \qquad q_t^i = z_t^i - \overline{z}_t
\end{equation*} 
for convenience. We need that the higher moments of $x_t^i$ and $p_t^i$ are bounded for all time.
\begin{lemma}[Compare Lemma 5.3 in \cite{DingLi21}]\label{lem:5.3}
Under the same conditions as in Theorem \ref{thm:ensemble_distance}, for all $2\leq p < \infty$ and $T>0$, there is a constant $C_p$ independent of $B, t$ such that
\begin{equation}\label{eq:5.3_result}
    \mathbb{E}|x_t^i|^p = \mathbb{E}|x^1_t|^p \leq C_p, \qquad \mathbb{E}|p_t^i|^p = \mathbb{E}|p^1_t|^p \leq C_p
\end{equation}
for all $1\leq i \leq B$ and $0 \leq t \leq T$.
\end{lemma}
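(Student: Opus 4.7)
The proof will parallel the argument for Lemma 5.3 of~\cite{DingLi21}, adapted to accommodate the ALDI-specific correction term $\frac{D+1}{B}(y_t^i - \overline{y}_t)$ in \eqref{eq:lin_aldi}. Since $\{y_0^i\} = \{z_0^i\}$, we have $x_0^i = 0$ and $p_0^i = 0$, so the goal is to control the growth in $t$. Subtracting \eqref{eq:ideal_sde} from \eqref{eq:lin_aldi} and rearranging, the coupled process satisfies
\begin{equation*}
\begin{aligned}
    \mathrm{d}x_t^i &= \Bigl[-(\mathrm{Cov}_{y_t}-\mathrm{Cov}_{\pi(t)})P(y_t^i - y^*) - \mathrm{Cov}_{\pi(t)}P\,x_t^i + \tfrac{D+1}{B}(p_t^i + q_t^i)\Bigr]\mathrm{d}t \\
    &\qquad + \bigl(\sqrt{2\mathrm{Cov}_{y_t}} - \sqrt{2\mathrm{Cov}_{\pi(t)}}\bigr)\mathrm{d}W_t^i,
\end{aligned}
\end{equation*}
using $y_t^i - \overline{y}_t = p_t^i + q_t^i$. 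Averaging over $i$ and subtracting then yields the SDE for $p_t^i$, in which the $\mathrm{Cov}_{\pi(t)}P\,x_t^i$ term becomes $\mathrm{Cov}_{\pi(t)}P\,p_t^i$ and the correction simplifies to $\tfrac{D+1}{B}p_t^i$ (the centered component).

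The plan is to apply It\^o's formula \eqref{eq:ito_lemma} to $|x_t^i|^p$, take expectations, and derive a closed Gronwall-type inequality. The drift contributes the usual three pieces: a $-\mathrm{Cov}_{\pi(t)}P\,x_t^i$ contraction term (bounded by $C\,\mathbb{E}|x_t^i|^p$ via positivity of $P$ and boundedness of $\mathrm{Cov}_{\pi(t)}$, cf. Theorem~\ref{thm:carrillo}), a covariance-difference term $(\mathrm{Cov}_{y_t}-\mathrm{Cov}_{\pi(t)})P(y_t^i - y^*)$ that, after triangle-splitting through $\mathrm{Cov}_{\hat{\nu}_t^B}$ and a Young inequality, is controlled by $\tfrac{1}{B}\sum_j \mathbb{E}|x_t^j|^p$ plus an empirical-approximation term with uniform-in-$B$ expectation via Theorem~\ref{thm:conv_emp_measure} and the moment bounds on $\hat{\nu}_t^B$, and the new ALDI term $\tfrac{D+1}{B}(p_t^i + q_t^i)$. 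For the latter, $|p_t^i| \leq 2|x_t^i|$ after bounding $|\overline{x}_t|$ by the empirical mean, and $\mathbb{E}|q_t^i|^p \leq C_p$ since the $z_t^i$ remain i.i.d. samples from $\pi(t)$ whose moments are controlled uniformly on $[0,T]$. Thus this term contributes at most $\frac{C}{B}(\mathbb{E}|x_t^i|^p + 1)$, which is absorbed into the Gronwall coefficients.

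The quadratic variation part requires the Ando--Hemmen inequality
$$\bigl|\sqrt{\mathrm{Cov}_{y_t}} - \sqrt{\mathrm{Cov}_{\pi(t)}}\bigr|_F \leq \frac{|\mathrm{Cov}_{y_t} - \mathrm{Cov}_{\pi(t)}|_F}{\lambda_{\min}(\mathrm{Cov}_{y_t})^{1/2} + \lambda_{\min}(\mathrm{Cov}_{\pi(t)})^{1/2}},$$
where the denominator is controlled below by assumption \eqref{eq:ev_condition}. This reduces the diffusion estimate to another covariance-difference bound of the same form as above. Combining, we obtain
$$\frac{\mathrm{d}}{\mathrm{d}t}\mathbb{E}|x_t^i|^p \leq C_1(p,T,D)\bigl(1 + \mathbb{E}|x_t^i|^p\bigr) + C_2(p,T,D)\,\mathbb{E}(\mathcal{W}_2(\hat{\nu}_t^B,\pi(t)))^p,$$
and the latter term is $o(1)$ uniformly in $t\in[0,T]$ by Proposition~5.1 of~\cite{DingLi21}. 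Gronwall's inequality with $\mathbb{E}|x_0^i|^p=0$ then produces a bound $\mathbb{E}|x_t^i|^p \leq C_p$ depending only on $p, T, D$. Exchangeability of the particles identifies $\mathbb{E}|x_t^1|^p = \mathbb{E}|x_t^i|^p$.

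The main obstacle is tracking and controlling the dependence on $B$: both the covariance-difference terms and the ALDI correction naively scale in ways that could degenerate as $B\to\infty$, and the nonlinear coupling of particles through $\mathrm{Cov}_{y_t}$ means the SDE for $x_t^i$ is not autonomous. The crucial observations are that Cauchy--Schwarz/Young manipulations reduce everything to averaged quantities $\frac{1}{B}\sum_j \mathbb{E}|x_t^j|^p = \mathbb{E}|x_t^1|^p$, and that the $\tfrac{1}{B}$ prefactor in the ALDI correction together with the uniform moment bounds on $\{q_t^i\}$ render the new contribution harmless. The argument for $p_t^i$ is analogous: the SDE for $p_t^i$ has the same structural form, the correction $\tfrac{D+1}{B}p_t^i$ is again a mild linear perturbation, and the resulting Gronwall inequality yields $\mathbb{E}|p_t^i|^p \leq C_p$.
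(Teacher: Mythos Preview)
The paper's proof of this lemma is a two-line triangle inequality, not a Gronwall argument on the coupled process. It simply observes that $x_t^i=y_t^i-z_t^i$ and $p_t^i=(y_t^i-\overline{y}_t)-(z_t^i-\overline{z}_t)$, so
\[
\mathbb{E}|x_t^i|^p\le 2^{p-1}\bigl(\mathbb{E}|y_t^i|^p+\mathbb{E}|z_t^i|^p\bigr),
\]
and then invokes the moment bounds on $y_t^i$ established independently for ALDI in Lemma~\ref{lem:4.1}\,/\,Proposition~\ref{prop:4.1} together with the moment bounds on $z_t^i$ from Proposition~5.3 of~\cite{DingLi21}. All of the ALDI-specific work (handling the correction term) happens in those earlier lemmas, not here.

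Your direct route has a genuine closure gap. In the It\^o expansion of $|x_t^i|^p$, the covariance-difference drift $(\mathrm{Cov}_{y_t}-\mathrm{Cov}_{z_t})P(y_t^i-y^*)$ expands, via
\[
\mathrm{Cov}_{y_t}-\mathrm{Cov}_{z_t}=\frac{1}{B}\sum_j\bigl(p_t^j(p_t^j)^\intercal+p_t^j(q_t^j)^\intercal+q_t^j(p_t^j)^\intercal\bigr),\qquad y_t^i-y^*=x_t^i+(z_t^i-y^*),
\]
into terms that are quadratic in the $p_t^j$ and then multiplied by $x_t^i$. After pairing with $|x_t^i|^{p-2}x_t^i$ and taking expectations you obtain contributions of the type $\mathbb{E}\bigl[|x_t^i|^{p}\cdot\frac1B\sum_j|p_t^j|^2\bigr]$, which by H\"older and exchangeability are of order $\mathbb{E}|x_t^1|^{p+2}$, not $\mathbb{E}|x_t^1|^p$. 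The same issue arises from the diffusion part through $\|\mathrm{Cov}_{y_t}-\mathrm{Cov}_{z_t}\|_F^2$. Thus the differential inequality for the $p$-th moment involves the $(p+2)$-th moment and does not close linearly; Young's inequality only pushes the problem to still higher moments. The reason the paper's separate estimates on $y_t^i$ and $z_t^i$ escape this is structural: in the SDE for $e_t^i=y_t^i-\overline{y}_t$ the nonlinear drift is $-\mathrm{Cov}_{\e_t}\e_t^i$, whose inner product with $\e_t^i$ is nonpositive and can be discarded (see the proof of Lemma~\ref{lem:4.1}). Subtracting the two processes destroys this sign, which is why the paper first bounds each process on its own.
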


\begin{lemma}[Compare Lemma 5.4 in \cite{DingLi21}]\label{lem:5.4}
Under the same conditions as in Theorem 5, for any $0 \leq \alpha < 1$ and $T>0$, if there is a constant $\hat{C}$ independent of $B,t$ such that
\begin{equation}\label{eq:5.4_cond}
    \mathbb{E}|x_t^i|^2 \leq \hat{C}B^{-\alpha}
\end{equation}
for all $1\leq i \leq B$ and $0\leq t \leq T$, then for any $0 < \epsilon < 1/2$ and $1 \leq i \leq B$, there exists a constant $C$ independent of $B,t$ such that
\begin{equation}\label{eq:5.4_result}
    \mathbb{E}|p_t^i|^2 = \mathbb{E}\left| x_t^i - \dfrac{1}{B}\sum_{k=1}^B x_t^k \right|^2 \leq \Tilde{C}B^{-1/2-\alpha/2+\epsilon}
\end{equation}
for all $1\leq i \leq B$ and $0\leq t \leq T$.
\end{lemma}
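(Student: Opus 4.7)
My plan is to derive an SDE for $p_t^1$ from the ALDI and ideal dynamics, apply Itô's formula to $|p_t^1|^2$, and obtain a Gronwall differential inequality whose forcing term achieves exactly the advertised rate $B^{-1/2-\alpha/2+\epsilon}$. By exchangeability it suffices to work with $i=1$.

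\textbf{SDE for $p_t^1$ and dissipation.} I would first subtract the ideal equation \eqref{eq:ideal_sde} from the ALDI equation \eqref{eq:lin_aldi} and then subtract the ensemble means, using $r_t^i := y_t^i - \bar{y}_t$ and $q_t^i := z_t^i - \bar{z}_t$ (so $p_t^i = r_t^i - q_t^i$). The result is
\begin{align*}
\mathrm{d}p_t^1 &= \bigl[-\mathrm{Cov}_{y_t}P\,p_t^1 - (\mathrm{Cov}_{y_t}-\mathrm{Cov}_{\pi(t)})P\,q_t^1 + \tfrac{D+1}{B}\,r_t^1\bigr]\,\mathrm{d}t \\
&\quad + \bigl[\sqrt{2\mathrm{Cov}_{y_t}}-\sqrt{2\mathrm{Cov}_{\pi(t)}}\bigr]\bigl(\mathrm{d}W_t^1 - \tfrac{1}{B}\textstyle\sum_k \mathrm{d}W_t^k\bigr).
\end{align*}
Applying Itô \eqref{eq:ito_lemma} to $|p_t^1|^2$ and taking expectation, the term $-2\mathbb{E}\langle p_t^1,\mathrm{Cov}_{y_t}P\,p_t^1\rangle$ provides a dissipative contribution $-c\,\mathbb{E}|p_t^1|^2$ by the same argument as in Lemma \ref{lem:5.3}, where $c>0$ is extracted from the spectral hypothesis \eqref{eq:ev_condition}.

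\textbf{Key estimate via a covariance split.} Everything else reduces to controlling $\mathrm{Cov}_{y_t}-\mathrm{Cov}_{\pi(t)}$, which I would split as
\begin{equation*}
\mathrm{Cov}_{y_t}-\mathrm{Cov}_{\pi(t)} = \underbrace{\bigl(\mathrm{Cov}_{y_t}-\mathrm{Cov}_{\hat{\nu}_t^B}\bigr)}_{\mathrm{(a)}} + \underbrace{\bigl(\mathrm{Cov}_{\hat{\nu}_t^B}-\mathrm{Cov}_{\pi(t)}\bigr)}_{\mathrm{(b)}}.
\end{equation*}
The identity $r_t^i(r_t^i)^{\intercal}-q_t^i(q_t^i)^{\intercal}=r_t^i(p_t^i)^{\intercal}+p_t^i(q_t^i)^{\intercal}$ makes $\mathrm{(a)}$ linear in $p_t^\cdot$; by Cauchy--Schwarz together with the uniform higher-moment bounds \eqref{eq:5.3_result} this yields $\mathbb{E}\|\mathrm{(a)}\|_F^2 \lesssim \mathbb{E}|p_t^1|^2$. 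Term $\mathrm{(b)}$ is the genuine sampling error of an empirical covariance from $B$ i.i.d.\ samples of $\pi(t)$ and can be estimated by $\mathbb{E}\|\mathrm{(b)}\|_F^2 \lesssim B^{-1+\epsilon}$ for arbitrarily small $\epsilon>0$. I would then invoke the matrix Ando--Hemmen inequality under \eqref{eq:ev_condition} to transfer these bounds verbatim to $\sqrt{2\mathrm{Cov}_{y_t}}-\sqrt{2\mathrm{Cov}_{\pi(t)}}$, which handles the diffusion/Itô-trace contribution.

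\textbf{Closing via Gronwall.} Substituting into the Itô identity and bounding the cross term $\mathbb{E}\langle p_t^1,(\mathrm{Cov}_{y_t}-\mathrm{Cov}_{\pi(t)})P\,q_t^1\rangle$ by Cauchy--Schwarz and Young's inequality (trading factors of $p_t^1$ against $x_t^1$ through $p_t^1=x_t^1-\bar{x}_t$ and invoking the hypothesis $\mathbb{E}|x_t^1|^2\leq\hat{C}B^{-\alpha}$ whenever advantageous), I expect a differential inequality of the form
\begin{equation*}
\tfrac{d}{dt}\mathbb{E}|p_t^1|^2 \leq -\tfrac{c}{2}\mathbb{E}|p_t^1|^2 + C(T,D,\epsilon)\,B^{-1/2-\alpha/2+\epsilon},
\end{equation*}
after absorbing mixed $(\mathbb{E}|p_t^1|^2)^{1/2}$ contributions into the dissipation. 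Gronwall plus the initial condition $\mathbb{E}|p_0^1|^2=0$ then yields the claim. The hardest part will be the last step: separating the self-referential piece $\mathrm{(a)}$ (which couples back into $\mathbb{E}|p_t^1|^2$) from the sampling piece $\mathrm{(b)}$ and choosing the Young split so that the $B^{-1}$ arising from the ALDI correction $(D+1)/B$, the $B^{-1+\epsilon}$ from sampling, and the $B^{-\alpha}$ from the hypothesis combine into precisely $B^{-1/2-\alpha/2+\epsilon}$ rather than a strictly weaker rate.
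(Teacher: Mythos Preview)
Your overall architecture (It\^o on $|p_t^1|^2$, covariance split, Ando--Hemmen, Gronwall) is the same as the paper's, but the mechanism you attribute to \eqref{eq:ev_condition} is wrong, and without the correct mechanism the Gronwall inequality does not close at the advertised rate.

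Concretely, your claim that $-2\,\mathbb{E}\langle p_t^1,\mathrm{Cov}_{y_t}P\,p_t^1\rangle\le -c\,\mathbb{E}|p_t^1|^2$ is false. Assumption \eqref{eq:ev_condition} bounds $\lambda_0(t)=(\lambda_{\min}(\mathrm{Cov}_{y_t})^{1/2}+\lambda_{\min}(\mathrm{Cov}_{\pi(t)})^{1/2})^2$ from below, but says nothing about $\lambda_{\min}(\mathrm{Cov}_{y_t})$ alone, which may be arbitrarily small. (Nor does Lemma~\ref{lem:5.3} extract any such coercivity; there the quadratic drift term is simply discarded as nonpositive.) As a consequence, your estimate $\mathbb{E}\|\mathrm{(a)}\|_F^2\lesssim\mathbb{E}|p_t^1|^2$ cannot be used to absorb the It\^o--trace contribution into a linear Gronwall term: after Cauchy--Schwarz one has $\mathbb{E}\|\mathrm{(a)}\|_F^2\le\mathbb{E}\bigl[(\tfrac1B\sum_i|p_t^i|^2)\,(\tfrac1B\sum_i(|r_t^i|+|q_t^i|)^2)\bigr]$, and any H\"older split of this product forces a moment of $p_t^1$ of order strictly above $2$. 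Interpolating against Lemma~\ref{lem:5.3} and the hypothesis $\mathbb{E}|x_t^1|^2\le \hat C B^{-\alpha}$ yields at best $\mathbb{E}\|\mathrm{(a)}\|_F^2\le C B^{-\alpha+\eta}$, which is strictly weaker than $B^{-1/2-\alpha/2}$ for $\alpha<1$ and would break the bootstrap in Theorem~\ref{thm:ensemble_distance}.

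The paper's fix (following \cite{DingLi21}) is structural: after summing over all particles, the two drift terms are algebraically reorganized to produce a \emph{negative} contribution $-\lambda_{\min}(P)\,\mathbb{E}\|\mathrm{Cov}_{y_t}-\mathrm{Cov}_{z_t}\|_F^2$, together with a positive term $C\,\mathbb{E}|p_t^1|^2$ and remainders of the correct order. The It\^o trace, via Ando--Hemmen and the same split $\mathrm{(a)}+\mathrm{(b)}$, contributes the matching positive piece $+2\lambda_0(t)^{-1}\mathbb{E}\|\mathrm{Cov}_{y_t}-\mathrm{Cov}_{z_t}\|_F^2$. Assumption \eqref{eq:ev_condition} is precisely the balance condition $\lambda_{\min}(P)\ge 2\lambda_0(t)^{-1}$ ensuring the net $\|\mathrm{Cov}_{y_t}-\mathrm{Cov}_{z_t}\|_F^2$ coefficient is nonpositive, so this term can be dropped. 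The resulting inequality is $\tfrac{d}{dt}\mathbb{E}|p_t^1|^2\le +C\,\mathbb{E}|p_t^1|^2 + C B^{-1/2-\alpha/2+\epsilon}$ with a \emph{positive} linear coefficient; the conclusion then follows from Gronwall on $[0,T]$ with $\mathbb{E}|p_0^1|^2=0$, not from dissipation.
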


\begin{lemma}[Compare Lemma 5.5 in \cite{DingLi21}]\label{lem:5.5}
Under the same conditions as lemma \ref{lem:5.4}, we have for any $0<\epsilon < 1/2$ and $T>0$ a constant $\Tilde{C}$ independent of $B, t$ such that 
\begin{equation}\label{eq:5.5_result}
    \mathbb{E}|x_t^i|^2 \leq \Tilde{C}B^{-1/2-\alpha/2+\epsilon}
\end{equation}
for all $1\leq i \leq B$ and $0\leq t\leq T$.
\end{lemma}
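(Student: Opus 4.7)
}

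The plan is a bootstrap argument: starting from the crude pointwise bound $\mathbb{E}|x_t^i|^2 \leq \hat{C}B^{-\alpha}$ (assumption \eqref{eq:5.4_cond}), I would feed the sharper centered bound \eqref{eq:5.4_result} from Lemma \ref{lem:5.4} into a Gr\"onwall inequality for $t\mapsto \mathbb{E}|x_t^i|^2$ and close the loop at the improved rate $B^{-1/2-\alpha/2+\epsilon}$. This mirrors the EKS argument in \cite{DingLi21}, the additional work being to absorb ALDI's correction term $\tfrac{D+1}{B}(y_t^i-\overline{y}_t)$.

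First I would subtract \eqref{eq:ideal_sde} from \eqref{eq:lin_aldi} and decompose the drift as
\begin{equation*}
dx_t^i = -\mathrm{Cov}_{y_t} P\, x_t^i\, dt
+ (\mathrm{Cov}_{\pi(t)}-\mathrm{Cov}_{y_t})\,P(z_t^i - y^\ast)\, dt
+ \tfrac{D+1}{B}(y_t^i - \overline{y}_t)\, dt
+ \bigl(\sqrt{2\mathrm{Cov}_{y_t}} - \sqrt{2\mathrm{Cov}_{\pi(t)}}\bigr)\, dW_t^i.
\end{equation*}
Applying It\^o's formula \eqref{eq:ito_lemma} to $|x_t^i|^2$ and taking expectations kills the martingale part and yields a differential equation of the schematic form
\begin{equation*}
\tfrac{d}{dt}\mathbb{E}|x_t^i|^2
= -2\,\mathbb{E}\bigl[(x_t^i)^\intercal \mathrm{Cov}_{y_t} P\, x_t^i\bigr]
+ 2\,\mathbb{E}\bigl[(x_t^i)^\intercal R_t^i\bigr]
+ \mathbb{E}\,\mathrm{Tr}\bigl[\bigl(\sqrt{2\mathrm{Cov}_{y_t}}-\sqrt{2\mathrm{Cov}_{\pi(t)}}\bigr)^{\!2}\bigr],
\end{equation*}
where $R_t^i$ collects the covariance-mismatch drift, the ALDI correction and controllable cross terms. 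The first term is non-positive (drop it), and it remains to bound the remaining two contributions by $B^{-1/2-\alpha/2+\epsilon}$ up to multiplicative constants depending on $T,D,\epsilon$.

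Next I would estimate the covariance discrepancy $\mathrm{Cov}_{y_t}-\mathrm{Cov}_{\pi(t)}$ by splitting it as $(\mathrm{Cov}_{y_t}-\mathrm{Cov}_{\hat\nu_t^B})+(\mathrm{Cov}_{\hat\nu_t^B}-\mathrm{Cov}_{\pi(t)})$. The first piece expands into quadratic expressions in the centered differences $p_t^i$ and the centered ideal particles $q_t^i$; by Cauchy--Schwarz, H\"older, the higher-moment bounds of Lemma \ref{lem:5.3}, and Lemma \ref{lem:5.4} it is controlled in $L^2$ by $B^{-1/4-\alpha/4+\epsilon/2}$. The second piece is $O(B^{-1/2+\epsilon})$ by standard empirical covariance concentration for i.i.d.\ samples with bounded moments. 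The square-root difference $\|\sqrt{2\mathrm{Cov}_{y_t}}-\sqrt{2\mathrm{Cov}_{\pi(t)}}\|$ is handled by the Ando--Hemmen inequality, which is precisely where the spectral hypothesis \eqref{eq:ev_condition} is used to keep the constant uniform on $[0,T]$. Squaring these bounds delivers trace and cross terms of size $B^{-1/2-\alpha/2+\epsilon}$ after one more Cauchy--Schwarz pairing with $(\mathbb{E}|x_t^i|^2)^{1/2}\leq \hat{C}^{1/2}B^{-\alpha/2}$.

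The ALDI-specific term is the simplest: since $y_t^i-\overline{y}_t = p_t^i + q_t^i$, Cauchy--Schwarz gives
\begin{equation*}
\bigl|\mathbb{E}\bigl[(x_t^i)^\intercal \tfrac{D+1}{B}(y_t^i-\overline{y}_t)\bigr]\bigr|
\leq \tfrac{D+1}{B}\bigl(\mathbb{E}|x_t^i|^2\bigr)^{1/2}\bigl(\mathbb{E}|p_t^i|^2 + \mathbb{E}|q_t^i|^2\bigr)^{1/2} = O(B^{-1-\alpha/2}),
\end{equation*}
which is dominated by the target rate. Combining all estimates produces
\begin{equation*}
\tfrac{d}{dt}\mathbb{E}|x_t^i|^2 \leq C(T,D,\epsilon)\bigl(\mathbb{E}|x_t^i|^2 + B^{-1/2-\alpha/2+\epsilon}\bigr),
\end{equation*}
and Gr\"onwall's inequality, together with the shared initial condition $x_0^i = 0$, yields the claim. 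The main obstacle I anticipate is keeping the Ando--Hemmen constant uniform in $t$ (which is what \eqref{eq:ev_condition} is for) and bookkeeping the many cross terms so that none of them accidentally exceed the target rate $B^{-1/2-\alpha/2+\epsilon}$.
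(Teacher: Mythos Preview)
Your proposal is correct and follows essentially the same route as the paper, which simply defers to \cite[Lemma~5.5]{DingLi21}; you have additionally made explicit that the ALDI correction term $\tfrac{D+1}{B}(y_t^i-\overline{y}_t)$ contributes at order $B^{-1}$ and is therefore harmless, which is the only new bookkeeping relative to the EKS case.
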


\begin{proof}[Proof of Theorem \ref{thm:ensemble_distance}]
By lemma \ref{lem:5.3}, \eqref{eq:5.4_cond} is satisfied for $\alpha_0 = 0$, yielding 
\begin{equation*}
    \mathbb{E}|x_t^i|^2 \leq \Tilde{C}B^{-1/2+\epsilon}
\end{equation*}
by lemma \ref{lem:5.5}. Hence, \eqref{eq:5.4_cond} is satisfied for $\alpha_1 = 1/2 - \epsilon$ and recursively for $\alpha_{n+1} = 1/2 + \alpha_n/2 -\epsilon$. The limit of this sequence is $\lim_{n\rightarrow \infty} \alpha_n = 1-2\epsilon$, yielding
\begin{equation*}
    \mathbb{E}|z_t^i-y_t^i|^2 = \dfrac{1}{B}\sum_{i=1}^B \mathbb{E}|z_t^i-y_t^i|^2 \leq \Tilde{C}B^{-1+2\epsilon}.
\end{equation*}
\end{proof}

It remains to show the three lemmas \ref{lem:5.3}-\ref{lem:5.5}, for which we need some more intermediate results.

    \begin{lemma}[Compare Lemma 4.1 from \cite{DingLi21}]\label{lem:4.1}
    Let $\{y_t^i\}_{i=1}^B$ be the solution of \eqref{eq:lin_aldi} with initial conditions $\{y_0^i\}_{i=1}^B$ sampled i.i.d from $\pi_0\in\mathcal{C}^2$. If the initial $p$-th moment is finite, i.e.
    \begin{equation*}
        \left( \mathbb{E}| y_0^i |^p \right)^{1/p} < M, \quad 1\leq i \leq B
    \end{equation*}
    for $p\geq 2$ and some $M>0$ independent of $B$,  then the boundedness also holds true for $\mathbb{E}|e_t^i|$, where $e_t^i=y_t^i-\overline{y}_t$, namely there is a $C>0$ depending only on $p$, so that
    \begin{equation*}
        \left( \mathbb{E}| e_t^i |^p \right)^{1/p}
        < 2(\kappa(P))^{1/2} M \exp(Ct)
    \end{equation*}
    for all $t\geq 0$ and $1\leq i \leq B$, where $\kappa(P) = \|P\|_2\|P^{-1}\|_2$ is the condition number of $P$.
    \end{lemma}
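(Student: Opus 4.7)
The plan is to control $|y_t^i - y^\ast|$ in the $P$-weighted norm $|v|_P^2 := v^\top P v$ via an It\^o/Gr\"onwall argument, and then translate the resulting bound back into the Euclidean norm at the price of the condition number factor $\kappa(P)^{1/2}$.

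First, I would set $\tilde{y}_t^i := y_t^i - y^\ast$ and note that this only shifts the drift, so that \eqref{eq:lin_aldi} reads
\begin{equation*}
  \mathrm{d}\tilde{y}_t^i = -\mathrm{Cov}_{y_t} P\, \tilde{y}_t^i\,\mathrm{d}t + \frac{D+1}{B} e_t^i\,\mathrm{d}t + \sqrt{2\mathrm{Cov}_{y_t}}\,\mathrm{d}W_t^i,
\end{equation*}
with $e_t^i = \tilde{y}_t^i - \overline{\tilde{y}}_t$ and $\mathrm{Cov}_{y_t}$ invariant under translation. Applying It\^o's formula \eqref{eq:ito_lemma} to $f(\tilde{y}) = |\tilde{y}|_P^p$ yields three drift contributions: (i) the dissipative term $-p|\tilde{y}_t^i|_P^{p-2}(\tilde{y}_t^i)^\top P\mathrm{Cov}_{y_t}P\,\tilde{y}_t^i$, which is non-positive because $P^{1/2}\mathrm{Cov}_{y_t}P^{1/2}\succeq 0$ and can therefore be discarded; (ii) the correction term $\frac{p(D+1)}{B}|\tilde{y}_t^i|_P^{p-2}(\tilde{y}_t^i)^\top P e_t^i$; and (iii) a trace term of the form $C_p \,\mathrm{tr}(\mathrm{Cov}_{y_t}P)|\tilde{y}_t^i|_P^{p-2} + C_p'\, (\tilde{y}_t^i)^\top P \mathrm{Cov}_{y_t}P\tilde{y}_t^i\,|\tilde{y}_t^i|_P^{p-4}$. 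The stochastic integrals vanish in expectation because the higher moments control the local martingale (this would be justified by a standard localisation/stopping-time argument).

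Next, I would exploit the exchangeability of $(\tilde{y}_t^i)_{i=1}^B$ induced by the symmetric initial condition and the invariance of \eqref{eq:lin_aldi} under permutations to set $M_p(t) := \mathbb{E}|\tilde{y}_t^i|_P^p$ (independent of $i$) and similarly $N_p(t) := \mathbb{E}|e_t^i|_P^p$. The key algebraic observation is
\begin{equation*}
  \mathrm{tr}(\mathrm{Cov}_{y_t}P) \le \|P\|_2\, \mathrm{tr}(\mathrm{Cov}_{y_t}) = \frac{\|P\|_2}{B}\sum_{j=1}^B |e_t^j|^2,
\end{equation*}
together with the elementary bound $|e_t^i| \le |\tilde{y}_t^i| + |\overline{\tilde{y}}_t| \le 2\max_j |\tilde{y}_t^j|$. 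Combined with Cauchy--Schwarz and Young's inequality applied to (ii), these estimates will reduce the trace and correction terms to multiples of $M_p(t)$ (plus a constant), so that the differential inequality collapses to
\begin{equation*}
  \frac{\mathrm{d}}{\mathrm{d}t} M_p(t) \le C\bigl(1 + M_p(t)\bigr),
\end{equation*}
with $C$ depending only on $p$, $D$ and $\|P\|_2$. Gr\"onwall's lemma then gives $M_p(t) \le (M_p(0)+1) e^{Ct}$, and $M_p(0) \le \|P\|_2^{p/2}(M + |y^\ast|)^p$ by hypothesis. Finally, $|e_t^i|^p \le 2^{p-1}(|\tilde{y}_t^i|^p + |\overline{\tilde{y}}_t|^p) \le 2^p\,\|P^{-1}\|_2^{p/2}\, |\tilde{y}_t^i|_P^p$ after using Jensen, and this is where the $\kappa(P)^{1/2} = (\|P\|_2\|P^{-1}\|_2)^{1/2}$ factor enters.

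The main obstacle will be the rigorous handling of the correction drift $\frac{D+1}{B} e_t^i$: unlike in the EKS case treated in \cite{DingLi21}, this term mixes different particles through $\overline{\tilde{y}}_t$ and creates a coupling with $N_p(t)$. The plan is to close the system by observing that $N_p(t) \le 2^p M_p(t)\cdot \kappa(P)^{p/2}$ via the translation-invariance estimate above, so that $N_p$ need not be tracked separately. A secondary technical point is justifying the use of It\^o's formula on $|\cdot|_P^p$ when $p \in [2,4)$, where the Hessian is singular at the origin; this is handled in a standard way by first applying the argument to $(|\cdot|_P^2 + \varepsilon)^{p/2}$ and sending $\varepsilon\downarrow 0$ with dominated convergence using the uniform bound on $M_p$.
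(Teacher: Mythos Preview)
Your plan is workable but takes a detour compared to the paper. The paper applies It\^o's formula directly to $h_p(t) = \mathbb{E}|\sqrt{P}\,e_t^i|^{2p}$, i.e.\ it tracks the \emph{centered} variables $e_t^i = y_t^i - \overline{y}_t$ from the start rather than the shifted ones $\tilde{y}_t^i = y_t^i - y^\ast$. This buys two concrete simplifications. First, since $\overline{e}_t \equiv 0$, the ALDI correction drift in the SDE for $\mathbf{e}_t^i = \sqrt{P}\,e_t^i$ collapses to $\frac{D+1}{B}\,\mathbf{e}_t^i$, a single-particle term; so what you flag as the ``main obstacle'' (the coupling through $\overline{\tilde{y}}_t$) never arises, and there is no need to close the system via an auxiliary estimate $N_p(t) \le 2^p M_p(t)$. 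Second, the initial bound becomes $h_p(0)^{1/(2p)} \le 2\|P\|_2^{1/2} M$ directly from $|e_0^i| \le |y_0^i| + |\overline{y}_0|$ and Jensen, without picking up the $|y^\ast|$-dependence that your $M_p(0) \le \|P\|_2^{p/2}(M+|y^\ast|)^p$ introduces; this is why the paper recovers the clean prefactor $2\kappa(P)^{1/2}M$ stated in the lemma, whereas your route would only give a constant involving $|y^\ast|$.

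A minor correctness point: your final pointwise chain $|e_t^i|^p \le 2^{p-1}(|\tilde{y}_t^i|^p + |\overline{\tilde{y}}_t|^p) \le 2^p\|P^{-1}\|_2^{p/2}|\tilde{y}_t^i|_P^p$ is false as written, since the right-hand side depends on a fixed $i$ while the middle term contains $|\overline{\tilde{y}}_t|^p$. The inequality only holds after taking expectation and invoking exchangeability plus Jensen, which you seem to intend but should make explicit.
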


\begin{proof}[Proof of Lemma \ref{lem:4.1}]
Following the proof of lemma 4.1 in \cite{DingLi21}, we will show the claim for $2p$ with $p\geq 1$ and define
\begin{equation*}
    \mathbf{e}_t^i = \sqrt{P}e_t^i, \quad V_p(\mathbf{e}) = \dfrac{1}{B}\sum_{i=1}^B \left\langle \mathbf{e}_t^i,\mathbf{e}_{t}^i \right\rangle^p = \dfrac{1}{B}\sum_{i=1}^B  |\mathbf{e}_{t}^i|^{2p} , \quad h_p(t) = \mathbb{E}V_p = \mathbb{E}|\e^1_t|^{2p}.
\end{equation*}
Since $\lambda_{\textnormal{min}}(P) > 0$, boundedness of $\mathbb{E}| e_t^i |^p$ follows immediatley from boundedness of $h_p(t)$, which we now show.
First, we get
\begin{equation*}
\begin{aligned}
    (h_p(0))^{\frac{1}{2p}} &= \left( \mathbb{E}|\e^1_0|^{2p} \right)^{\frac{1}{2p}} \leq \| P \|^{1/2}_2 \left( \mathbb{E}|e^1_0|^{2p} \right)^{\frac{1}{2p}} \\
    &\leq \| P \|^{1/2}_2 \left( \left( \mathbb{E}|y^1_0|^{2p} \right)^{\frac{1}{2p}} + \left( \mathbb{E}|\overline{y}_0|^{2p} \right)^{\frac{1}{2p}} \right) \\
    &\leq 2\| P \|_2^{1/2} M,
\end{aligned}
\end{equation*}
where we have used the boundedness of the initial moments. Our next intermediate goal is to show
\begin{equation*}
    h_p(t) \leq h_p(0) + C(p,D) \int_{0}^t h_p(s) ds
\end{equation*}
with a constant $C(p,D)$ depending on $p$ and $D$. To that end, note that by \eqref{eq:lin_aldi} we have
\begin{equation}
\begin{aligned}
     \mathrm{d}\overline{y}_t &= -\mathrm{Cov}_{y_t}P(\overline{y}_t-y^\ast)\mathrm{d}t + \dfrac{D+1}{B}\sum_{i=1}^B (y_t^i-\overline{y}_t) + \sqrt{2\mathrm{Cov}_{y_t}}\mathrm{d}\overline{W}_t 
\end{aligned}
\end{equation}
and with $\mathrm{Cov}_{y_t} = \mathrm{Cov}_{e_t} = \sqrt{P}^{-1}\mathrm{Cov}_{\e_t}\sqrt{P}^{-1}$, we find
\ifnum\classstyle=0 
{\footnotesize
   \begin{equation}
    \begin{aligned}\label{eq:de}
    \mathrm{d}\e_{t}^i = \mathrm{d}\sqrt{P}(y_t^i-\overline{y}_t) &=  -\sqrt{P}\mathrm{Cov}_{e_t}Pe_t^i\mathrm{d}t + \dfrac{D+1}{B}(\e_t^i-\overline{\e}_t)\mathrm{d}t + \sqrt{P}\sqrt{2\mathrm{Cov}_{e_t}}\mathrm{d}(W_t^i-\overline{W}_t) \\
    &= -\mathrm{Cov}_{\e_t}\e_t^i\mathrm{d}t + \dfrac{D+1}{B}(\e_t^i-\overline{\e}_t)\mathrm{d}t + \sqrt{P}\sqrt{2\mathrm{Cov}_{e_t}}\mathrm{d}(W_t^i-\overline{W}_t).
\end{aligned}
\end{equation}
}
\fi
\ifnum\classstyle=1 
   \begin{equation}
    \begin{aligned}\label{eq:de}
    \mathrm{d}\e_{t}^i = \mathrm{d}\sqrt{P}(y_t^i-\overline{y}_t) &=  -\sqrt{P}\mathrm{Cov}_{e_t}Pe_t^i\mathrm{d}t + \dfrac{D+1}{B}(\e_t^i-\overline{\e}_t)\mathrm{d}t + \sqrt{P}\sqrt{2\mathrm{Cov}_{e_t}}\mathrm{d}(W_t^i-\overline{W}_t) \\
    &= -\mathrm{Cov}_{\e_t}\e_t^i\mathrm{d}t + \dfrac{D+1}{B}(\e_t^i-\overline{\e}_t)\mathrm{d}t + \sqrt{P}\sqrt{2\mathrm{Cov}_{e_t}}\mathrm{d}(W_t^i-\overline{W}_t).
\end{aligned}
\end{equation}
\fi
\ifnum\classstyle=2 
\begin{small}
   \begin{equation}
    \begin{aligned}\label{eq:de}
    \mathrm{d}\e_{t}^i = \mathrm{d}\sqrt{P}(y_t^i-\overline{y}_t) &=  -\sqrt{P}\mathrm{Cov}_{e_t}Pe_t^i\mathrm{d}t + \dfrac{D+1}{B}(\e_t^i-\overline{\e}_t)\mathrm{d}t + \sqrt{P}\sqrt{2\mathrm{Cov}_{e_t}}\mathrm{d}(W_t^i-\overline{W}_t) \\
    &= -\mathrm{Cov}_{\e_t}\e_t^i\mathrm{d}t + \dfrac{D+1}{B}(\e_t^i-\overline{\e}_t)\mathrm{d}t + \sqrt{P}\sqrt{2\mathrm{Cov}_{e_t}}\mathrm{d}(W_t^i-\overline{W}_t).
\end{aligned}
\end{equation}
\end{small}
\fi
Noting that
   \begin{equation}
    \begin{aligned}
    V_p(\e_t) &= \sum_{i=1}^B f(\e_t^i), 
    \end{aligned}
\end{equation}
with
\begin{equation*}
   f(x) = \dfrac{1}{B}|x|^{2p}, \quad \nabla f(x) = \dfrac{2p}{B}|x|^{2p-2}x, \quad \mathrm{Hess}_f(x) = \dfrac{2p}{B}|x|^{2p-2} I_d + \dfrac{4p(p-1)}{B}|x|^{2p-4}xx^{\intercal}, 
\end{equation*}
we apply It\^o's lemma \eqref{eq:ito_lemma} to $V_p(\e_t)$ to receive
\begin{equation}
\begin{aligned}\label{eq:dVp}
    \mathrm{d}V_p(\e_t) = &-\dfrac{2p}{B}\sum_{i=1}^B \left\langle \e_t^i, \e_t^i \right\rangle^{p-1} \left\langle  \e_t^i,\text{Cov}_{\e_t} \e_t^i \right\rangle \mathrm{d}t\\
    &+ \dfrac{2p}{B}\sum_{i=1}^B  \left\langle \e_t^i, \e_t^i \right\rangle^{p-1} \left\langle \e_t^i,\sqrt{P}\sqrt{2\text{Cov}_{e_t}}\mathrm{d}(W_t^i - \overline{W}_t)  \right\rangle \\
    &+ \dfrac{4(B-1)p(p-1)}{B^2}\sum_{i=1}^B \left\langle \e_t^i, \e_t^i \right\rangle^{p-2} \text{Tr}\left\{ (\e_t^i\otimes \e_t^i)\text{Cov}_{\e_t} \right\} \mathrm{d}t \\
    &+ \dfrac{2(B-1)p}{B^2} \sum_{i=1}^B \left\langle \e_t^i, \e_t^i \right\rangle^{p-1} \text{Tr}\left\{ \text{Cov}_{\e_t} \right\} \mathrm{d}t\\
    &+ \dfrac{2p}{B}\sum_{i=1}^B \left\langle \e_t^i, \e_t^i \right\rangle^{p-1} \left\langle  \e_t^i,\dfrac{D+1}{B}\left[ \e_t^i - \sum_{k=1}^B \e_t^k \right] \right\rangle\mathrm{d}t,
\end{aligned}
\end{equation}
for $p\geq 2$, with the third term vanishing in the case $p=1$. In order to bound these terms in expectation, note that for $l \in \mathbb{N}$ with $l \leq p$ we have by H\"older's inequality
\begin{equation}
    \begin{aligned}\label{eq:hoelder}
    \mathbb{E}\bigg[ \left\langle \e_t^j,\e_t^j \right\rangle^{p-l} \left\langle \e_t^k,\e_t^k \right\rangle^l \bigg] &\leq  \mathbb{E}\bigg[ \left\langle \e_t^j,\e_t^j \right\rangle^{p} \bigg]^{\frac{p-l}{p}}\mathbb{E}\bigg[ \left\langle \e_t^j,\e_t^j \right\rangle^{p} \bigg]^{\frac{l}{p}} \\
    &= \mathbb{E}\bigg[ \left\langle \e_t^j,\e_t^j \right\rangle^{p} \bigg].
\end{aligned}
\end{equation}

Furthermore, we will make use of the inequalities
\begin{equation}\label{eq:young}
    \langle x,y \rangle \leq \dfrac{\langle x,x \rangle + \langle y, y \rangle}{2} \quad \text{and} \quad
     \langle x,y \rangle^2 \leq \langle x,x\rangle \langle y,y\rangle  \leq \dfrac{\langle x,x \rangle^2 + \langle y, y \rangle^2}{2},
\end{equation}
holding for arbitrary elements $x,y$ of a linear space with inner product $\langle.,.\rangle$. 
Taking the expectation in \eqref{eq:dVp} and noting that the first term is always nonnegative, we find
\ifnum\classstyle=0 
{\scriptsize
   \begin{equation*}
\begin{aligned}
    h_p(t) - h_p(0) \leq
    &~\dfrac{4(B-1)p(p-1)}{B^3}\int_{0}^t\sum_{i,k=1}^B \mathbb{E}\left[ \left\langle \e_s^i, \e_s^i \right\rangle^{p-2} \left\langle \e_s^i, \e_s^k \right\rangle^2 \right] \mathrm{d}s \\
    &+ \dfrac{2(B-1)p}{B^3} \int_{0}^t \sum_{i,k=1}^B \mathbb{E}\left[ \left\langle \e_s^i, \e_s^i \right\rangle^{p-1} \left\langle \e_s^k,\e_s^k \right\rangle \right] \mathrm{d}s\\
    &+ \dfrac{2p(D+1)}{B^2}\left\{\int_0^t \sum_{i=1}^B \mathbb{E}\left[ \left\langle \e_s^i, \e_s^i \right\rangle^{p} \right]  \mathrm{d}s - \int_{0}^t\sum_{i,k=1}^B \mathbb{E}\left[ \left\langle \e_s^i, \e_s^i \right\rangle^{p-1} \left\langle \e_s^i, \e_s^k \right\rangle \right] \mathrm{d}s \right\}
    \\
    \leq 
    &~\dfrac{4(B-1)p(p-1)}{B^3}\int_{0}^t\sum_{i,k=1}^B \mathbb{E}\left[ \left\langle \e_s^i, \e_s^i \right\rangle^{p-2} \frac{\left\langle \e_s^i, \e_s^i \right\rangle^2 + \left\langle \e_s^k, \e_s^k \right\rangle^2}{2} \right] \mathrm{d}s \\
    &+ \dfrac{2(B-1)p}{B^3} \int_{0}^t \sum_{i,k=1}^B \mathbb{E}\left[ \left\langle \e_s^i, \e_s^i \right\rangle^{p-1} \left\langle \e_s^k,\e_s^k \right\rangle \right] \mathrm{d}s\\
    &+ \dfrac{2p(D+1)}{B^2}\left\{\int_0^t \sum_{i=1}^B \mathbb{E}\left[ \left\langle \e_s^i, \e_s^i \right\rangle^{p} \right]  \mathrm{d}s + \int_{0}^t\sum_{i,k=1}^B \mathbb{E}\left[ \left\langle \e_s^i, \e_s^i \right\rangle^{p-1} \frac{\left\langle \e_s^i, \e_s^i \right\rangle + \left\langle \e_s^k, \e_s^k \right\rangle}{2} \right] \mathrm{d}s \right\}
    \\
    \leq
    &~ \dfrac{4(B-1)p(p-1)}{B}\int_0^t h_p(s) \mathrm{d}s + \dfrac{2(B-1)p}{B} \int_{0}^t h_p(s) \mathrm{d}s \\
    &+ \dfrac{2p(D+1)}{B^2} \left\{B\int_0^t h_p(s) \mathrm{d}s + B^2\int_0^t h_p(s) \mathrm{d}s\right\}
    \\
    \leq
    &~ \dfrac{4(B-1)p(p-1) + 2(B-1)p + 2p(D+1) + 2Bp(D+1)}{B}\int_0^t h_p(s) \mathrm{d}s \\
    \leq &~(4p(p-1) + 2p + 2p(D+1) + 2p(D+1)) \int_0^t h_p(s) \mathrm{d}s \coloneqq C(p,D)\int_0^t h_p(s) \mathrm{d}s.
    \end{aligned}
\end{equation*}
}
\fi
\ifnum\classstyle=1 
   \begin{equation*}
\begin{aligned}
    h_p(t) - h_p(0) \leq
    &~\dfrac{4(B-1)p(p-1)}{B^3}\int_{0}^t\sum_{i,k=1}^B \mathbb{E}\left[ \left\langle \e_s^i, \e_s^i \right\rangle^{p-2} \left\langle \e_s^i, \e_s^k \right\rangle^2 \right] \mathrm{d}s \\
    &+ \dfrac{2(B-1)p}{B^3} \int_{0}^t \sum_{i,k=1}^B \mathbb{E}\left[ \left\langle \e_s^i, \e_s^i \right\rangle^{p-1} \left\langle \e_s^k,\e_s^k \right\rangle \right] \mathrm{d}s\\
    &+ \dfrac{2p(D+1)}{B^2}\left\{\int_0^t \sum_{i=1}^B \mathbb{E}\left[ \left\langle \e_s^i, \e_s^i \right\rangle^{p} \right]  \mathrm{d}s - \int_{0}^t\sum_{i,k=1}^B \mathbb{E}\left[ \left\langle \e_s^i, \e_s^i \right\rangle^{p-1} \left\langle \e_s^i, \e_s^k \right\rangle \right] \mathrm{d}s \right\}
    \\
    \leq 
    &~\dfrac{4(B-1)p(p-1)}{B^3}\int_{0}^t\sum_{i,k=1}^B \mathbb{E}\left[ \left\langle \e_s^i, \e_s^i \right\rangle^{p-2} \frac{\left\langle \e_s^i, \e_s^i \right\rangle^2 + \left\langle \e_s^k, \e_s^k \right\rangle^2}{2} \right] \mathrm{d}s \\
    &+ \dfrac{2(B-1)p}{B^3} \int_{0}^t \sum_{i,k=1}^B \mathbb{E}\left[ \left\langle \e_s^i, \e_s^i \right\rangle^{p-1} \left\langle \e_s^k,\e_s^k \right\rangle \right] \mathrm{d}s\\
    &+ \dfrac{2p(D+1)}{B^2}\left\{\int_0^t \sum_{i=1}^B \mathbb{E}\left[ \left\langle \e_s^i, \e_s^i \right\rangle^{p} \right]  \mathrm{d}s + \int_{0}^t\sum_{i,k=1}^B \mathbb{E}\left[ \left\langle \e_s^i, \e_s^i \right\rangle^{p-1} \frac{\left\langle \e_s^i, \e_s^i \right\rangle + \left\langle \e_s^k, \e_s^k \right\rangle}{2} \right] \mathrm{d}s \right\}
    \\
    \leq
    &~ \dfrac{4(B-1)p(p-1)}{B}\int_0^t h_p(s) \mathrm{d}s + \dfrac{2(B-1)p}{B} \int_{0}^t h_p(s) \mathrm{d}s \\
    &+ \dfrac{2p(D+1)}{B^2} \left\{B\int_0^t h_p(s) \mathrm{d}s + B^2\int_0^t h_p(s) \mathrm{d}s\right\}
    \\
    \leq
    &~ \dfrac{4(B-1)p(p-1) + 2(B-1)p + 2p(D+1) + 2Bp(D+1)}{B}\int_0^t h_p(s) \mathrm{d}s \\
    \leq &~(4p(p-1) + 2p + 2p(D+1) + 2p(D+1)) \int_0^t h_p(s) \mathrm{d}s \coloneqq C(p,D)\int_0^t h_p(s) \mathrm{d}s.
    \end{aligned}
\end{equation*}
\fi
\ifnum\classstyle=2 
{\footnotesize
\begin{equation*}
\begin{aligned}
    h_p(t) - h_p(0) \leq
    &~\dfrac{4(B-1)p(p-1)}{B^3}\int_{0}^t\sum_{i,k=1}^B \mathbb{E}\left[ \left\langle \e_s^i, \e_s^i \right\rangle^{p-2} \left\langle \e_s^i, \e_s^k \right\rangle^2 \right] \mathrm{d}s \\
    &+ \dfrac{2(B-1)p}{B^3} \int_{0}^t \sum_{i,k=1}^B \mathbb{E}\left[ \left\langle \e_s^i, \e_s^i \right\rangle^{p-1} \left\langle \e_s^k,\e_s^k \right\rangle \right] \mathrm{d}s\\
    &+ \dfrac{2p(D+1)}{B^2}\left\{\int_0^t \sum_{i=1}^B \mathbb{E}\left[ \left\langle \e_s^i, \e_s^i \right\rangle^{p} \right]  \mathrm{d}s - \int_{0}^t\sum_{i,k=1}^B \mathbb{E}\left[ \left\langle \e_s^i, \e_s^i \right\rangle^{p-1} \left\langle \e_s^i, \e_s^k \right\rangle \right] \mathrm{d}s \right\}
    \\
    \leq 
    &~\dfrac{4(B-1)p(p-1)}{B^3}\int_{0}^t\sum_{i,k=1}^B \mathbb{E}\left[ \left\langle \e_s^i, \e_s^i \right\rangle^{p-2} \frac{\left\langle \e_s^i, \e_s^i \right\rangle^2 + \left\langle \e_s^k, \e_s^k \right\rangle^2}{2} \right] \mathrm{d}s \\
    &+ \dfrac{2(B-1)p}{B^3} \int_{0}^t \sum_{i,k=1}^B \mathbb{E}\left[ \left\langle \e_s^i, \e_s^i \right\rangle^{p-1} \left\langle \e_s^k,\e_s^k \right\rangle \right] \mathrm{d}s\\
    &+ \dfrac{2p(D+1)}{B^2}\left\{\int_0^t \sum_{i=1}^B \mathbb{E}\left[ \left\langle \e_s^i, \e_s^i \right\rangle^{p} \right]  \mathrm{d}s + \int_{0}^t\sum_{i,k=1}^B \mathbb{E}\left[ \left\langle \e_s^i, \e_s^i \right\rangle^{p-1} \frac{\left\langle \e_s^i, \e_s^i \right\rangle + \left\langle \e_s^k, \e_s^k \right\rangle}{2} \right] \mathrm{d}s \right\}
    \\
    \leq
    &~ \dfrac{4(B-1)p(p-1)}{B}\int_0^t h_p(s) \mathrm{d}s + \dfrac{2(B-1)p}{B} \int_{0}^t h_p(s) \mathrm{d}s \\
    &+ \dfrac{2p(D+1)}{B^2} \left\{B\int_0^t h_p(s) \mathrm{d}s + B^2\int_0^t h_p(s) \mathrm{d}s\right\}
    \\
    \leq
    &~ \dfrac{4(B-1)p(p-1) + 2(B-1)p + 2p(D+1) + 2Bp(D+1)}{B}\int_0^t h_p(s) \mathrm{d}s \\
    \leq &~(4p(p-1) + 2p + 2p(D+1) + 2p(D+1)) \int_0^t h_p(s) \mathrm{d}s \coloneqq C(p,D)\int_0^t h_p(s) \mathrm{d}s.
    \end{aligned}
\end{equation*}
}
\fi
In total we get by Gronwall's inequality
\begin{equation*}
    h_p(t) \leq (2\| B \|_2^{1/2}M)^{2p} e^{C(p,D)t}
\end{equation*}
and hence
\begin{equation*}
\begin{aligned}
    \left( \mathbb{E}|e^j_t|^{2p} \right)^{1/2p} &= \left( \mathbb{E}|e^1_t|^{2p} \right)^{1/2p} = \left( \mathbb{E}|\sqrt{P}^{-1}\e^1_t|^{2p} \right)^{1/2p} \\
    &\leq  \| P^{-1} \|^{1/2}_2 \left( \mathbb{E}|\e^1_t|^{2p} \right)^{1/2p} \\
    &\leq \| P^{-1} \|^{1/2}_2 \left( (2\| P \|_2^{1/2}M)^{2p} e^{C(p,D)t} \right)^{1/2p}\\
    &= 2M\kappa(P)^{1/2}e^{C(p,D)t}.
\end{aligned}
\end{equation*}
\end{proof}

\begin{lemma}[Proposition 4.1 from \cite{DingLi21}]\label{prop:4.1}
Let the conditions of lemma \ref{lem:4.1} be satisfied for $2p$ (i.e. the $2p$-th initial moments are finite). Then we have
\begin{equation*}
    \left( \mathbb{E}|y_t^i - \overline{y}_t|^p \right)^{1/p} \leq Ce^{Ct} \quad \text{and} \quad (\mathbb{E}\| \mathrm{Cov}_{y_t} \|_2^p)^{1/p} \leq Ce^{Ct}
\end{equation*}
as well as
\begin{equation*}
    \left( \mathbb{E}| y^i_t |^p \right)^{1/p} \leq Ce^{Ce^{Ct}}
\end{equation*}
with $C>0$ being independent of $B$ and $t.$
\end{lemma}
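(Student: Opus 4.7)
My plan is to treat the three bounds in order of difficulty, exploiting that the SDE for the ensemble mean $\overline{y}_t$ is the same for ALDI as for the EKS (the ALDI correction $\tfrac{D+1}{B}(y_t^i-\overline{y}_t)$ cancels upon averaging over $i$). In particular, once Lemma \ref{lem:4.1} is available in the ALDI setting, the strategy of \cite[Prop.~4.1]{DingLi21} should transfer almost verbatim.

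The first bound is immediate, since $y_t^i-\overline{y}_t=e_t^i$ and Lemma \ref{lem:4.1} applies at exponent $p$ (the hypothesis that the $2p$-th initial moment is finite supplies in particular the $p$-th moment). For the second bound I would use positive semidefiniteness of $\mathrm{Cov}_{y_t}$ to dominate operator norm by trace,
\[
\|\mathrm{Cov}_{y_t}\|_2 \;\leq\; \mathrm{Tr}(\mathrm{Cov}_{y_t}) \;=\; \frac{1}{B}\sum_{i=1}^B|e_t^i|^2,
\]
and then Jensen's inequality $\bigl(\tfrac{1}{B}\sum_i a_i\bigr)^p \leq \tfrac{1}{B}\sum_i a_i^p$ together with Lemma \ref{lem:4.1} at exponent $2p$ to pass to expectation. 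This yields $(\mathbb{E}\|\mathrm{Cov}_{y_t}\|_2^p)^{1/p}\leq Ce^{Ct}$.

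For the third bound I would split $y_t^i=\overline{y}_t+e_t^i$, so that only $\mathbb{E}|\overline{y}_t|^p$ remains to be estimated, and observe that after cancellation of the ALDI correction
\[
d\overline{y}_t \;=\; -\mathrm{Cov}_{y_t}P(\overline{y}_t-y^*)\,dt + \sqrt{2\mathrm{Cov}_{y_t}}\,d\overline{W}_t.
\]
Applying It\^o to $|\overline{y}_t-y^*|^p$, controlling the drift via $|\langle u,\mathrm{Cov}_{y_t}Pu\rangle|\leq \|\mathrm{Cov}_{y_t}\|_2\|P\|_2|u|^2$ and using $|u|^{p-2}\leq 1+|u|^p$ to absorb the It\^o correction, I would obtain the pathwise inequality
\[
d|\overline{y}_t-y^*|^p \;\leq\; C_{p,P,D}\,\|\mathrm{Cov}_{y_t}\|_2\bigl(1+|\overline{y}_t-y^*|^p\bigr)\,dt + d\mathcal{M}_t
\]
with $\mathcal{M}_t$ a local martingale. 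A standard stopping-time localization to remove $\mathcal{M}$, combined with pathwise Gr\"onwall and the moment bound from the second part, should then yield $\mathbb{E}|\overline{y}_t-y^*|^p\leq Ce^{Ce^{Ct}}$.

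The hardest step is this last conversion. Because $\|\mathrm{Cov}_{y_t}\|_2$ is itself random with moments that grow exponentially in $t$, applying H\"older to $\mathbb{E}[\|\mathrm{Cov}_{y_s}\|_2\,|\overline{y}_s-y^*|^p]$ inevitably couples $V_p(t):=(\mathbb{E}|\overline{y}_t-y^*|^p)^{1/p}$ to $V_{2p}(t)$, forcing a bootstrap up a hierarchy of moments whose self-consistent growth rate is precisely the advertised double exponential. This bound is far from sharp but is more than sufficient for its downstream use in Lemma \ref{lem:5.3}, where all that is needed is finiteness of high moments on compact time intervals.
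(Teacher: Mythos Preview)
Your treatment of the first two bounds is correct and close to the paper's (the paper bounds $\|\mathrm{Cov}_{y_t}\|_2^p$ via the triangle inequality on the rank-one summands rather than via the trace, but the two are equivalent up to constants).

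The third bound has a genuine gap. Your strategic idea---work with $\overline{y}_t$, where the ALDI correction cancels---is sound and in fact a simplification over the paper, which applies It\^o to the full particle functional $K_p(\mathbf{y})=\tfrac{1}{B}\sum_i|\sqrt{P}y_t^i|^{2p}$. The problem is the step ``pathwise Gr\"onwall and the moment bound from the second part''. Pathwise Gr\"onwall against the random rate $A_t=C\|\mathrm{Cov}_{y_t}\|_2$ produces a factor $\exp\!\bigl(\int_0^t A_s\,ds\bigr)$, and taking expectations then requires \emph{exponential} moments of $\int_0^t\|\mathrm{Cov}_{y_s}\|_2\,ds$, which the second part does not supply (you only have polynomial moments). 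Your alternative---H\"older to couple $V_p$ to $V_{2p}$ and bootstrap---does not close either: each step of the hierarchy $V_p\to V_{2p}\to V_{4p}\to\cdots$ introduces a fresh unknown, and nothing you have written explains how to terminate it.

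The fix the paper uses (and which transplants directly to your mean-based approach) is twofold. First, work in the $P$-weighted norm, setting $\mathbf{u}_t=\sqrt{P}(\overline{y}_t-y^*)$; then the drift term $-\langle\mathbf{u}_t,\sqrt{P}\mathrm{Cov}_{y_t}\sqrt{P}\,\mathbf{u}_t\rangle$ is nonpositive and can be dropped. Second, and crucially, do not treat $\|\mathrm{Cov}_{y_t}\|_2$ as a black-box random coefficient: expand $\mathrm{Cov}_{y_t}=\tfrac{1}{B}\sum_k e_t^k(e_t^k)^{\intercal}$ and apply H\"older so that the $|e_t^k|^2$ factor pairs against its own $p$-th moment (controlled by Lemma~\ref{lem:4.1}) rather than against $|\mathbf{u}_t|$. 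The It\^o correction then gives
\[
\frac{d}{dt}\,\mathbb{E}|\mathbf{u}_t|^{2p}\;\le\;C e^{Ct}\bigl(\mathbb{E}|\mathbf{u}_t|^{2p}\bigr)^{(p-1)/p},
\]
a closed Bernoulli inequality solvable by Lemma~\ref{lem:gronwall_bernoulli}. In the paper's individual-particle version the ALDI correction contributes an additional linear term $Cg_p(t)$, whence the generalized Gr\"onwall Lemma~\ref{lem:gronwall_general} and the double exponential; in your mean-based route that term is absent and you would in fact get a single exponential for $\mathbb{E}|\overline{y}_t-y^*|^{2p}$, from which the stated bound on $\mathbb{E}|y_t^i|^p$ follows by $y_t^i=\overline{y}_t+e_t^i$.
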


\begin{proof}[Proof of Lemma \ref{prop:4.1}]
The first inequality follows directly from lemma \ref{lem:4.1} and H\"older's inequality. The second inequality follows from
\begin{equation*}
\begin{aligned}
    (\mathbb{E}\| \text{Cov}_{y_t} \|_2^p)^{1/p} &\leq \dfrac{1}{B}\sum_{i=1}^B \mathbb{E}\left( \| (y_t^i - \overline{y}_t) (y_t^i - \overline{y}_t)^{\intercal} \|_2^p \right)^{1/p} \\
    &= \dfrac{1}{B} \sum_{i=1}^B \left( \mathbb{E}|y^i_t - \overline{y}_t|^{2p} \right)^{1/p} \\
    &= \dfrac{1}{B} \sum_{i=1}^B \left(\left( \mathbb{E}|y^i_t - \overline{y}_t|^{2p} \right)^{1/2p}\right)^2 \\
    &\leq Ce^{Ct},
\end{aligned}
\end{equation*}
where we have again used lemma \ref{lem:4.1} in the last inequality. Similar to the proof of lemma \ref{lem:4.1}, we now set
    \begin{equation*}
    \mathbf{y}_t^i = \sqrt{P}y_t^i, \quad K_p(\mathbf{y}) = \dfrac{1}{B}\sum_{i=1}^B \left\langle \mathbf{y}_t^i,\mathbf{y}_{t}^i \right\rangle^p = \dfrac{1}{B}\sum_{i=1}^B  |\mathbf{y}_{t}^i|^{2p}
\end{equation*}
and
\begin{equation*}
    g_p(t) = \mathbb{E}K_p = \mathbb{E}|\y^1_t|^{2p}.
\end{equation*}
We will show the remaining inequality for $2p$ instead of $p$, by which H\"older's inequality yields the assertion. The dynamics of the scaled particles $\y_t^i$ is given by
\begin{equation*}
    \mathrm{d}\y_t^i = -\mathrm{Cov}_{\y_t}(\y_t^i - \y^{*})\mathrm{d}t + \dfrac{D+1}{B}(\y_t^j - \overline{\y}_t)\mathrm{d}t + \sqrt{B}\sqrt{2\mathrm{Cov}_{y_t}}\mathrm{d}W^i_t.
\end{equation*}
The proof now follows similar steps to that of lemma \ref{lem:4.1}. By It\^o's lemma we get the dynamics of $K_p(\y_t)$ and, by taking the expectation, that of $g_p(t)$, which we then bound by Gronwall's lemma. More precisely, It\^o's lemma \eqref{eq:ito_lemma} yields
\begin{equation}
    \begin{aligned}\label{eq:dKp}
        \mathrm{d}K_p(\y_t) = &-\dfrac{2p}{B}\sum_{i=1}^B \left\langle \y_t^i,\y_t^i \right\rangle^{p-1} \left\langle  \y_t^i,\mathrm{Cov}_{\y_t}(\y_t^i - \y^*) \right\rangle \mathrm{d}t \\
        &+ \dfrac{2p}{B}\sum_{i=1}^B \left\langle \y_t^i, \y_t^i \right\rangle^{p-1} \left\langle  \y_t^i , \sqrt{B}\sqrt{2\text{Cov}_{y_t}}dW^i_t \right\rangle \\
        &+ \dfrac{4p(p-1)}{B} \sum_{i=1}^B \left\langle \y_t^i, \y_t^i\right\rangle^{p-2}\text{Tr}\left\{ (\y_t^i \otimes \y_t^i) \text{Cov}_{\y_t} \right\}\mathrm{d}t \\
        &+ \dfrac{2p}{B}\sum_{i=1}^B \left\langle \y_t^i, \y_t^i \right\rangle^{p-1} \text{Tr}\{ \mathrm{Cov}_{\y_t} \} \mathrm{d}t \\
        &+ \dfrac{2p(D+1)}{B^2}\sum_{i=1}^B \left\langle \y_t^i, \y_t^i \right\rangle^{p-1} \left\langle \y_t^i, \y_t^i-\overline{\y}_t \right\rangle\mathrm{d}t,
    \end{aligned}
\end{equation}
with the last term corresponding to the ALDI correction term. Using \eqref{eq:hoelder} and \eqref{eq:young}, the expectation of this term term can be bounded according to
    \begin{equation*}
    \begin{aligned}
        &~\dfrac{2p(D+1)}{B^2}\sum_{i=1}^B \mathbb{E}\left[ \left\langle \y_t^i, \y_t^i \right\rangle^{p-1} \left\langle \y_t^i, \y_t^i-\overline{\y}_t \right\rangle\right] \\
        &\leq  \dfrac{2p(D+1)}{B} g_p(t) + \dfrac{2p(D+1)}{B^3} \sum_{i,k=1}^B \mathbb{E}\left[  \dfrac{ \langle \y_t^i,\y_t^i \rangle^{p} + \langle \y_t^i,\y_t^i \rangle^{p-1}\langle \y_t^k, \y_t^k \rangle}{2} \right] \\
        &\leq \dfrac{2p(D+1)}{B} g_p(t) + \dfrac{p(D+1)}{B} g_p(t) + \dfrac{p(D+1)}{B} g_p(t) \\
        &= C(p,D)g_p(t).
    \end{aligned}
    \end{equation*}
    The second term in \eqref{eq:dKp} vanishes in expectation. The first, third and forth terms can be bounded in the same way as in the proof of Proposition 4.1 in \cite{DingLi21}. The computations are technical but not very insightful and since the interested reader may follow the arguments in that work step by step, we will not repeat them here in detail, but simply state the result. For the first term, we get with H\"older's inequality and lemma \ref{lem:4.1}
    \begin{equation}
        \begin{aligned}
            &-\dfrac{2p}{B}\sum_{i=1}^B \mathbb{E} \left[ \left\langle \y_t^i,\y_t^i \right\rangle^{p-1} \left\langle  \y_t^i,\mathrm{Cov}_{\y_t}(\y_t^i - \y^*) \right\rangle \right]  \\
            &\leq 2p|\y^{\ast}|\left( \dfrac{1}{B} \sum_{i=1}^B \mathbb{E}\langle \y_t^i,\y_t^i \rangle^p \right)^{(p-1/2)/p}\left( \dfrac{1}{B}\sum_{i=1}^B \mathbb{E}\langle \e_t^k,\e_t^k \rangle^{2p} \right)^{1/(2p)} \\
            &\leq 2p|\y^{\ast}| g_p^{(p-1/2)/p}(t) Ce^{C(p,D)t}.
        \end{aligned}
    \end{equation}
    For the third and forth terms we receive
    \begin{equation}
        \begin{aligned}
        &\dfrac{4p(p-1)}{B} \sum_{i=1}^B \mathbb{E} \left[ \left\langle \y_t^i, \y_t^i\right\rangle^{p-2}\text{Tr}\left\{ (\y_t^i \otimes \y_t^i) \text{Cov}_{\y_t} \right\} \right] \\
        &\leq  4p(p-1)\left( \dfrac{1}{B}\sum_{i=1}^B \mathbb{E}\langle \y_t^i,\y_t^i \rangle^p \right)^{(p-1)/p} \left( \dfrac{1}{B}\sum_{k=1}^B \mathbb{E}\langle \e_t^k, \e_t^k \rangle^p \right)^{1/p} \\
        &\leq 4p(p-1)g_p^{(p-1)/p}(t)Ce^{C(p,D)t}
        \end{aligned}
    \end{equation}
    and
    \begin{equation}
         \begin{aligned}
         &\dfrac{2p}{B}\sum_{i=1}^B \mathbb{E}\left[\left\langle \y_t^i, \y_t^i \right\rangle^{p-1} \text{Tr}\{ \mathrm{Cov}_{\y_t} \} \right] \\
         &\leq 2p \left( \dfrac{1}{B}\sum_{i=1}^B \mathbb{E}\langle \y_t^i,\y_t^i \rangle^p \right)^{(p-1)/p} \left( \dfrac{1}{B}\sum_{k=1}^B \mathbb{E}\langle \e_t^k, \e_t^k \rangle^p \right)^{1/p} \\
         &\leq 2p g_p^{(p-1)/p}(t)Ce^{C(p,D)t}.
         \end{aligned}
    \end{equation}
    In total, we arrive at
    \begin{equation*}
        g'_p(t) \leq Cg_p(t) + Ce^{Ct}\left( g^{(p-1)/p}_p(t) + g_p^{(p-1/2)/p}(t) \right).
    \end{equation*}
    The result now follows from lemma \ref{lem:gronwall_general}.
\end{proof}

\begin{proof}[Proof of Lemma \ref{lem:5.3}]
The lemma results directly from boundedness of the higher moments of $y_t^i$ (Lemma \ref{prop:4.1}) and $z_t^i$ (see Proposition 5.3 in \cite{DingLi21}).
\end{proof}

\begin{proof}[Proof of Lemma \ref{lem:5.4}]
First, note that
\ifnum\classstyle=0 
{\footnotesize
\begin{equation}
    \mathbb{E}|p_t^i|^2 = \mathbb{E}|x_t^i-\overline{x}_t|^2 = \mathbb{E}(|x_t^i|^2+|\overline{x}_t|^2-2\langle x_t^i, \overline{x}_t \rangle)   = \mathbb{E}(|x_t^i|^2-|\overline{x}_t|^2) = \mathbb{E}\left(\dfrac{1}{B}\sum_{i=1}^B |x_t^i|^2-|\overline{x}_t|^2\right)
\end{equation}
}
\fi
\ifnum\classstyle=1 
\begin{equation}
    \mathbb{E}|p_t^i|^2 = \mathbb{E}|x_t^i-\overline{x}_t|^2 = \mathbb{E}(|x_t^i|^2+|\overline{x}_t|^2-2\langle x_t^i, \overline{x}_t \rangle)   = \mathbb{E}(|x_t^i|^2-|\overline{x}_t|^2) = \mathbb{E}\left(\dfrac{1}{B}\sum_{i=1}^B |x_t^i|^2-|\overline{x}_t|^2\right)
\end{equation}
\fi
\ifnum\classstyle=2 
\begin{small}
    \begin{equation}
    \mathbb{E}|p_t^i|^2 = \mathbb{E}|x_t^i-\overline{x}_t|^2 = \mathbb{E}(|x_t^i|^2+|\overline{x}_t|^2-2\langle x_t^i, \overline{x}_t \rangle)   = \mathbb{E}(|x_t^i|^2-|\overline{x}_t|^2) = \mathbb{E}\left(\dfrac{1}{B}\sum_{i=1}^B |x_t^i|^2-|\overline{x}_t|^2\right)
\end{equation}
\end{small}
\fi
due to the symmetry between the particles. Using \eqref{eq:lin_aldi} and \eqref{eq:ideal_sde}, we have
\begin{equation}
    \begin{aligned}
    \mathrm{d}x_t^i = &\left( -\mathrm{Cov}_{x_t+z_t}P (x_t^i+z_t^i) + \mathrm{Cov}_{\pi(t)} Pz_t^i \right)\mathrm{d}t + (\mathrm{Cov}_{x_t+z_t} - \mathrm{Cov}_{\pi(t)})Py^* \mathrm{d}t \\
    &+ \dfrac{D+1}{B}\left( y_t^i-\overline{y}_t \right)\mathrm{d}t+ \left( \sqrt{2\mathrm{Cov}_{x_t+z_t}}-\sqrt{2\mathrm{Cov}_{\pi(t)}} \right)\mathrm{d}W_t^i.
    \end{aligned}
\end{equation}
Now, applying It\^o's formula \eqref{eq:ito_lemma} and replacing  $\mathrm{Cov}_{\pi(t)}$ in the second and third terms with $\mathrm{Cov}_{z_t}$, we find
\ifnum\classstyle=0 
{\footnotesize
\begin{equation}
    \begin{aligned}\label{eq:d|x|mean}
    \mathrm{d}|x_t^i|^2 = &-2\left\langle x_t^i,\mathrm{Cov}_{x_t+z_t}Px_t^i \right\rangle\mathrm{d}t - 2\left\langle x_t^i, (\mathrm{Cov}_{x_t+z_t}-\mathrm{Cov}_{z_t})P z_t^i \right\rangle\mathrm{d}t \\
    &+2 \left\langle x_t^i, (\mathrm{Cov}_{x_t+z_t}-\mathrm{Cov}_{z_t}Py^{\ast})\right\rangle\mathrm{d}t + \mathrm{Tr}\left[ \left( \sqrt{\mathrm{Cov}_{x_t+z_t}}-\sqrt{\mathrm{Cov}_{\pi(t)}} \right)^2\right] \mathrm{d}t \\
    &+ 2\left\langle x_t^i, \dfrac{D+1}{B}(y_t^i-\overline{y}_t) \right\rangle\mathrm{d}t + 2\left\langle x_t^i, \left( \sqrt{2\mathrm{Cov}_{\pi(t)}} - \sqrt{2\mathrm{Cov}_{\pi(t)}} \right)\mathrm{d}W_t^i \right\rangle + R_t^i \mathrm{d}t
    \end{aligned}
\end{equation}
}
\fi
\ifnum\classstyle=1 
\begin{equation}
    \begin{aligned}\label{eq:d|x|mean}
    \mathrm{d}|x_t^i|^2 = &-2\left\langle x_t^i,\mathrm{Cov}_{x_t+z_t}Px_t^i \right\rangle\mathrm{d}t - 2\left\langle x_t^i, (\mathrm{Cov}_{x_t+z_t}-\mathrm{Cov}_{z_t})P z_t^i \right\rangle\mathrm{d}t \\
    &+2 \left\langle x_t^i, (\mathrm{Cov}_{x_t+z_t}-\mathrm{Cov}_{z_t}Py^{\ast})\right\rangle\mathrm{d}t + \mathrm{Tr}\left[ \left( \sqrt{\mathrm{Cov}_{x_t+z_t}}-\sqrt{\mathrm{Cov}_{\pi(t)}} \right)^2\right] \mathrm{d}t \\
    &+ 2\left\langle x_t^i, \dfrac{D+1}{B}(y_t^i-\overline{y}_t) \right\rangle\mathrm{d}t + 2\left\langle x_t^i, \left( \sqrt{2\mathrm{Cov}_{\pi(t)}} - \sqrt{2\mathrm{Cov}_{\pi(t)}} \right)\mathrm{d}W_t^i \right\rangle + R_t^i \mathrm{d}t
    \end{aligned}
\end{equation}
\fi
\ifnum\classstyle=2 
\begin{small}
\begin{equation}
    \begin{aligned}\label{eq:d|x|mean}
    \mathrm{d}|x_t^i|^2 = &-2\left\langle x_t^i,\mathrm{Cov}_{x_t+z_t}Px_t^i \right\rangle\mathrm{d}t - 2\left\langle x_t^i, (\mathrm{Cov}_{x_t+z_t}-\mathrm{Cov}_{z_t})P z_t^i \right\rangle\mathrm{d}t \\
    &+2 \left\langle x_t^i, (\mathrm{Cov}_{x_t+z_t}-\mathrm{Cov}_{z_t}Py^{\ast})\right\rangle\mathrm{d}t + \mathrm{Tr}\left[ \left( \sqrt{\mathrm{Cov}_{x_t+z_t}}-\sqrt{\mathrm{Cov}_{\pi(t)}} \right)^2\right] \mathrm{d}t \\
    &+ 2\left\langle x_t^i, \dfrac{D+1}{B}(y_t^i-\overline{y}_t) \right\rangle\mathrm{d}t + 2\left\langle x_t^i, \left( \sqrt{2\mathrm{Cov}_{\pi(t)}} - \sqrt{2\mathrm{Cov}_{\pi(t)}} \right)\mathrm{d}W_t^i \right\rangle + R_t^i \mathrm{d}t
    \end{aligned}
\end{equation}
\end{small}
\fi
and
\ifnum\classstyle=0 
{\footnotesize
\begin{equation}
    \begin{aligned}\label{eq:d|x|}
    \mathrm{d}|\overline{x}_t|^2 = &-2\left\langle \overline{x}_t,\mathrm{Cov}_{x_t+z_t}P\overline{x}_t \right\rangle\mathrm{d}t - 2\left\langle \overline{x}_t, (\mathrm{Cov}_{x_t+z_t}-\mathrm{Cov}_{z_t})P \overline{z}_t \right\rangle\mathrm{d}t \\
    &+2 \left\langle \overline{x}_t, (\mathrm{Cov}_{x_t+z_t}-\mathrm{Cov}_{z_t}Py^{\ast})\right\rangle\mathrm{d}t + \mathrm{Tr}\left[ \left( \sqrt{\mathrm{Cov}_{x_t+z_t}}-\sqrt{\mathrm{Cov}_{\pi(t)}} \right)^2\right] \mathrm{d}t \\
    &+ 2\sum_{i=1}^B \left\langle \overline{x}_t, \dfrac{D+1}{B^2}(y_t^i-\overline{y}_t) \right\rangle\mathrm{d}t + 2\left\langle \overline{x}_t, \left( \sqrt{2\mathrm{Cov}_{\pi(t)}} - \sqrt{2\mathrm{Cov}_{\pi(t)}} \right)\mathrm{d}\overline{W}_t \right\rangle + \overline{R}_t \mathrm{d}t,
    \end{aligned}
\end{equation}
}
\fi
\ifnum\classstyle=1 
\begin{equation}
    \begin{aligned}\label{eq:d|x|}
    \mathrm{d}|\overline{x}_t|^2 = &-2\left\langle \overline{x}_t,\mathrm{Cov}_{x_t+z_t}P\overline{x}_t \right\rangle\mathrm{d}t - 2\left\langle \overline{x}_t, (\mathrm{Cov}_{x_t+z_t}-\mathrm{Cov}_{z_t})P \overline{z}_t \right\rangle\mathrm{d}t \\
    &+2 \left\langle \overline{x}_t, (\mathrm{Cov}_{x_t+z_t}-\mathrm{Cov}_{z_t}Py^{\ast})\right\rangle\mathrm{d}t + \mathrm{Tr}\left[ \left( \sqrt{\mathrm{Cov}_{x_t+z_t}}-\sqrt{\mathrm{Cov}_{\pi(t)}} \right)^2\right] \mathrm{d}t \\
    &+ 2\sum_{i=1}^B \left\langle \overline{x}_t, \dfrac{D+1}{B^2}(y_t^i-\overline{y}_t) \right\rangle\mathrm{d}t + 2\left\langle \overline{x}_t, \left( \sqrt{2\mathrm{Cov}_{\pi(t)}} - \sqrt{2\mathrm{Cov}_{\pi(t)}} \right)\mathrm{d}\overline{W}_t \right\rangle + \overline{R}_t \mathrm{d}t,
    \end{aligned}
\end{equation}
\fi
\ifnum\classstyle=2 
\begin{small}
\begin{equation}
    \begin{aligned}\label{eq:d|x|}
    \mathrm{d}|\overline{x}_t|^2 = &-2\left\langle \overline{x}_t,\mathrm{Cov}_{x_t+z_t}P\overline{x}_t \right\rangle\mathrm{d}t - 2\left\langle \overline{x}_t, (\mathrm{Cov}_{x_t+z_t}-\mathrm{Cov}_{z_t})P \overline{z}_t \right\rangle\mathrm{d}t \\
    &+2 \left\langle \overline{x}_t, (\mathrm{Cov}_{x_t+z_t}-\mathrm{Cov}_{z_t}Py^{\ast})\right\rangle\mathrm{d}t + \mathrm{Tr}\left[ \left( \sqrt{\mathrm{Cov}_{x_t+z_t}}-\sqrt{\mathrm{Cov}_{\pi(t)}} \right)^2\right] \mathrm{d}t \\
    &+ 2\sum_{i=1}^B \left\langle \overline{x}_t, \dfrac{D+1}{B^2}(y_t^i-\overline{y}_t) \right\rangle\mathrm{d}t + 2\left\langle \overline{x}_t, \left( \sqrt{2\mathrm{Cov}_{\pi(t)}} - \sqrt{2\mathrm{Cov}_{\pi(t)}} \right)\mathrm{d}\overline{W}_t \right\rangle + \overline{R}_t \mathrm{d}t,
    \end{aligned}
\end{equation}
\end{small}
\fi
with 
\begin{equation}
\begin{aligned}
    R_t^j &= 2\left\langle x_t^i, (\mathrm{Cov}_{\pi(t)}-\mathrm{Cov}_{z_t})Pz_t^i \right\rangle - 2\left\langle x_t^i, (\mathrm{Cov}_{\pi(t)}-\mathrm{Cov}_{z_t})P y^* \right\rangle, \\
    \overline{R}_t &= 2\left\langle \overline{x}_t, (\mathrm{Cov}_{\pi(t)}-\mathrm{Cov}_{z_t})P\overline{z}_t \right\rangle - 2\left\langle \overline{x}_t, (\mathrm{Cov}_{\pi(t)}-\mathrm{Cov}_{z_t})P y^* \right\rangle.
\end{aligned}
\end{equation}
Combining \eqref{eq:d|x|} and \eqref{eq:d|x|mean}, we finally arrive at
\ifnum\classstyle=0 
{\footnotesize
\begin{equation}
    \begin{aligned}\label{eq:d|x|-d|x|mean}
    \mathrm{d}\left( \dfrac{1}{B} \sum_{i=1}^B |x_t^i|^2 -  |\overline{x}_t|^2 \right) = &-\dfrac{2}{B}\sum_{i=1}^B \left\langle p_t^i,\mathrm{Cov}_{p_t+q_t}Pp_t^i \right\rangle\mathrm{d}t - \dfrac{2}{B}\sum_{i=1}^B \left\langle p_t^i, (\mathrm{Cov}_{p_t+q_t}-\mathrm{Cov}_{q_t})Pq^i_t \right\rangle \mathrm{d}t \\
    &+ 2\left( 1-\dfrac{1}{B} \right)  \mathrm{Tr}\left[\left( \sqrt{\mathrm{Cov}_{x_t+z_t}}-\sqrt{\mathrm{Cov}_{\pi(t)}} \right)^2\right]\mathrm{d}t + \left( \dfrac{1}{B}\sum_{i=1}^B R_t^i - \overline{R}_t \right)\mathrm{d}t\\
    &+ \frac{D+1}{B^2}\sum_{i=1}^B\left[
2\left\langle  x_t^i, \left( y_t^i-\overline{y}_t \right) \right\rangle -
    2\left\langle  \overline{x}_t, \frac{1}{B}\sum_{i=1}^B\left( y_t^i-\overline{y}_t \right) \right\rangle \right]\mathrm{d}t \\
    &+ \dfrac{2}{B}\sum_{i=1}^B \left\langle \left( x_t^i-\overline{x}_t \right), \left( \sqrt{2\mathrm{Cov}_{x_t+z_t}}-\sqrt{2\mathrm{Cov}_{\pi(t)}} \right) \mathrm{d}\left( W_t^i - \overline{W}_t \right)\right\rangle.
    \end{aligned}
\end{equation}
}
\fi
\ifnum\classstyle=1 
\begin{equation}
    \begin{aligned}\label{eq:d|x|-d|x|mean}
    \mathrm{d}\left( \dfrac{1}{B} \sum_{i=1}^B |x_t^i|^2 -  |\overline{x}_t|^2 \right) = &-\dfrac{2}{B}\sum_{i=1}^B \left\langle p_t^i,\mathrm{Cov}_{p_t+q_t}Pp_t^i \right\rangle\mathrm{d}t - \dfrac{2}{B}\sum_{i=1}^B \left\langle p_t^i, (\mathrm{Cov}_{p_t+q_t}-\mathrm{Cov}_{q_t})Pq^i_t \right\rangle \mathrm{d}t \\
    &+ 2\left( 1-\dfrac{1}{B} \right)  \mathrm{Tr}\left[\left( \sqrt{\mathrm{Cov}_{x_t+z_t}}-\sqrt{\mathrm{Cov}_{\pi(t)}} \right)^2\right]\mathrm{d}t + \left( \dfrac{1}{B}\sum_{i=1}^B R_t^i - \overline{R}_t \right)\mathrm{d}t\\
    &+ \frac{D+1}{B^2}\sum_{i=1}^B\left[
2\left\langle  x_t^i, \left( y_t^i-\overline{y}_t \right) \right\rangle -
    2\left\langle  \overline{x}_t, \frac{1}{B}\sum_{i=1}^B\left( y_t^i-\overline{y}_t \right) \right\rangle \right]\mathrm{d}t \\
    &+ \dfrac{2}{B}\sum_{i=1}^B \left\langle \left( x_t^i-\overline{x}_t \right), \left( \sqrt{2\mathrm{Cov}_{x_t+z_t}}-\sqrt{2\mathrm{Cov}_{\pi(t)}} \right) \mathrm{d}\left( W_t^i - \overline{W}_t \right)\right\rangle.
    \end{aligned}
\end{equation}
\fi
\ifnum\classstyle=2 
{\footnotesize
\begin{equation}
    \begin{aligned}\label{eq:d|x|-d|x|mean}
    \mathrm{d}\left( \dfrac{1}{B} \sum_{i=1}^B |x_t^i|^2 -  |\overline{x}_t|^2 \right) = &-\dfrac{2}{B}\sum_{i=1}^B \left\langle p_t^i,\mathrm{Cov}_{p_t+q_t}Pp_t^i \right\rangle\mathrm{d}t - \dfrac{2}{B}\sum_{i=1}^B \left\langle p_t^i, (\mathrm{Cov}_{p_t+q_t}-\mathrm{Cov}_{q_t})Pq^i_t \right\rangle \mathrm{d}t \\
    &+ 2\left( 1-\dfrac{1}{B} \right)  \mathrm{Tr}\left[\left( \sqrt{\mathrm{Cov}_{x_t+z_t}}-\sqrt{\mathrm{Cov}_{\pi(t)}} \right)^2\right]\mathrm{d}t + \left( \dfrac{1}{B}\sum_{i=1}^B R_t^i - \overline{R}_t \right)\mathrm{d}t\\
    &+ \frac{D+1}{B^2}\sum_{i=1}^B\left[
2\left\langle  x_t^i, \left( y_t^i-\overline{y}_t \right) \right\rangle -
    2\left\langle  \overline{x}_t, \frac{1}{B}\sum_{i=1}^B\left( y_t^i-\overline{y}_t \right) \right\rangle \right]\mathrm{d}t \\
    &+ \dfrac{2}{B}\sum_{i=1}^B \left\langle \left( x_t^i-\overline{x}_t \right), \left( \sqrt{2\mathrm{Cov}_{x_t+z_t}}-\sqrt{2\mathrm{Cov}_{\pi(t)}} \right) \mathrm{d}\left( W_t^i - \overline{W}_t \right)\right\rangle.
    \end{aligned}
\end{equation}
}
\fi
The expectation of the last term vanishes by the properties of Brownian motion. The first four terms have the same form as in equation (5.22) in \cite{DingLi21} and can be treated in a similar way. Note that the terms are not identical to the terms in that work, since the underlying process $y_t$ and hence $x_t, p_t, q_t$ are different. They can however be treated in a completely analogous way, tracing the arguments one by one. Hence, we will only state the resulting bounds here, and refer to equations (5.22)-(5.27) in \cite{DingLi21} for details. The first two terms can be bounded in expectation according to
\begin{equation}
    \begin{aligned}
    &-\dfrac{2}{B}\mathbb{E}\left[ \sum_{i=1}^B \left\langle p_t^i,\mathrm{Cov}_{p_t+q_t}Pp_t^i \right\rangle\mathrm{d}t + \sum_{i=1}^B \left\langle p_t^i, (\mathrm{Cov}_{p_t+q_t}-\mathrm{Cov}_{q_t})Pq^i_t \right\rangle \mathrm{d}t \right] \\
    \leq &-\lambda_{\mathrm{min}}(P) \mathbb{E}\left( \| \mathrm{Cov}_{x_t+z_t} - \mathrm{Cov}_{z_t}\|^2_F \right) + 3\| P\|_F \mathrm{Var}(\pi(t))\mathbb{E}|p_t^1|^2 + CB^{-1/2-\alpha(1-\epsilon)}.
\end{aligned}
\end{equation}
For the third term, we get
\ifnum\classstyle=0 
{\footnotesize
\begin{equation}
    \begin{aligned}
    \mathbb{E}\left[ \mathrm{Tr}\left[\left( \sqrt{\mathrm{Cov}_{x_t+z_t}}-\sqrt{\mathrm{Cov}_{\pi(t)}} \right)^2\right] \right] \leq \lambda_{0}(t)^{-1} \mathbb{E}\| \mathrm{Cov}_{p_t+q_t}-\mathrm{Cov}_{q_t} \|_F^2 + C_{\epsilon}B^{-1/2-\alpha/2+\epsilon\alpha/4}
    \end{aligned}
\end{equation}
}
\fi
\ifnum\classstyle=1 
\begin{equation}
    \begin{aligned}
    \mathbb{E}\left[ \mathrm{Tr}\left[\left( \sqrt{\mathrm{Cov}_{x_t+z_t}}-\sqrt{\mathrm{Cov}_{\pi(t)}} \right)^2\right] \right] \leq \lambda_{0}(t)^{-1} \mathbb{E}\| \mathrm{Cov}_{p_t+q_t}-\mathrm{Cov}_{q_t} \|_F^2 + C_{\epsilon}B^{-1/2-\alpha/2+\epsilon\alpha/4}
    \end{aligned}
\end{equation}
\fi
\ifnum\classstyle=2 
{\small
\begin{equation}
    \begin{aligned}
    \mathbb{E}\left[ \mathrm{Tr}\left[\left( \sqrt{\mathrm{Cov}_{x_t+z_t}}-\sqrt{\mathrm{Cov}_{\pi(t)}} \right)^2\right] \right] \leq \lambda_{0}(t)^{-1} \mathbb{E}\| \mathrm{Cov}_{p_t+q_t}-\mathrm{Cov}_{q_t} \|_F^2 + C_{\epsilon}B^{-1/2-\alpha/2+\epsilon\alpha/4}
    \end{aligned}
\end{equation}
}
\fi
with a constant $C_{\epsilon}$ depending only on $\epsilon$ and $\lambda_0(t) = (\lambda_{\min}(\mathrm{Cov}_{x_t+z_t})^{1/2} + \lambda_{\min}(\mathrm{Cov}_{\pi(t)})^{1/2})^{2}$ coming from the Ando-Hemmen inequality (see Theorem 6.2 in \cite{higham2008}).






Finally, for the forth term, we get
\begin{equation}
    \begin{aligned}
    \mathbb{E}\left[ \dfrac{1}{B}\sum_{i=1}^B R_t^i - \overline{R}_t  \right] \leq C_{\varepsilon}B^{-1/2-\alpha/2+\epsilon\alpha/4},
    \end{aligned}
\end{equation}
again with a constant $C_{\epsilon}$ depending on $\epsilon$.
The fifth term comes from the ALDI correction term. To bound it, we note that
\begin{equation*}
\begin{aligned}
    \left\langle x_t^i, \left(  y_t^i-\overline{y}_t\right) \right\rangle &\leq \mathbb{E}|x_t^i|\left( \mathbb{E}|y_t^i| + \mathbb{E}|\overline{y}_t| \right) \\
    &\leq 2 (\mathbb{E}|x_t^i|^2)^{1/2} \mathbb{E}(|y_t^i|^2)^{1/2} \\
    &\leq C_1
\end{aligned}
\end{equation*}
with a constant $C_1$ independent of $B,t$, where we have used H\"older's inequality in the second inequality and Lemmas \ref{prop:4.1} and \ref{lem:5.3} in the last inequality. Similarly, we get 
\begin{equation*}
    \left\langle  \overline{x}_t, \frac{1}{B}\sum_{i=1}^B\left( y_t^j-\overline{y}_t \right) \right\rangle  \leq \frac{2}{B} \mathbb{E}|x_t^i| \sum_{i=1}^B \mathbb{E}|y_t^i| = 2 \mathbb{E}|x_t^i| \mathbb{E}|y_t^i| \leq C_1,
\end{equation*}
to arrive at
\ifnum\classstyle=0 
{\footnotesize
\begin{equation*}
\begin{aligned}
    &\mathbb{E}\left[ \frac{D+1}{B^2}\sum_{i=1}^B\left[
2\left\langle  x_t^i, \left( y_t^i-\overline{y}_t \right) \right\rangle -
    2\left\langle  \overline{x}_t, \frac{1}{B}\sum_{i=1}^B\left( y_t^i-\overline{y}_t \right) \right\rangle \right] \right] \leq \dfrac{2(D+1)}{B^2} B C_1 = 2(D+1)C_1 B^{-1}.
\end{aligned}
\end{equation*}
}
\fi
\ifnum\classstyle=1 
\begin{equation*}
\begin{aligned}
    &\mathbb{E}\left[ \frac{D+1}{B^2}\sum_{i=1}^B\left[
2\left\langle  x_t^i, \left( y_t^i-\overline{y}_t \right) \right\rangle -
    2\left\langle  \overline{x}_t, \frac{1}{B}\sum_{i=1}^B\left( y_t^i-\overline{y}_t \right) \right\rangle \right] \right] \leq \dfrac{2(D+1)}{B^2} B C_1 = 2(D+1)C_1 B^{-1}.
\end{aligned}
\end{equation*}
\fi
\ifnum\classstyle=2 
{\small
\begin{equation*}
\begin{aligned}
    &\mathbb{E}\left[ \frac{D+1}{B^2}\sum_{i=1}^B\left[
2\left\langle  x_t^i, \left( y_t^i-\overline{y}_t \right) \right\rangle -
    2\left\langle  \overline{x}_t, \frac{1}{B}\sum_{i=1}^B\left( y_t^i-\overline{y}_t \right) \right\rangle \right] \right] \leq \dfrac{2(D+1)}{B^2} B C_1 = 2(D+1)C_1 B^{-1}.
\end{aligned}
\end{equation*}
}
\fi
Hence, taking the expectation in \eqref{eq:d|x|-d|x|mean}, we find
\begin{equation}
    \begin{aligned}\label{eq:dEpdt}
    \dfrac{\mathrm{d}\mathbb{E}|p_t^1|^2}{\mathrm{d}t} \leq &3\| P \|_2\mathrm{Var}(\pi(t))\mathbb{E}|p_t^1|^2 - \left(  \lambda_{\min}(B)-2\lambda_0^{-1} \right) \mathbb{E}\| \mathrm{Cov}_{p_t+q_t}-\mathrm{Cov}_{q_t} \|_F^2 \\
    &+ CB^{-1/2-\alpha(1-\varepsilon)} + C_{\epsilon}B^{-1/2-\alpha/2+\epsilon\alpha/4} + + 2(D+1)C_1B^{-1}.
    \end{aligned}
\end{equation}
The second term is always negative by assumption \eqref{eq:ev_condition}. Furthermore we note that since $0\leq \alpha < 1$ and $0<\epsilon< 1/2$ all exponents of $B$ appearing in \eqref{eq:dEpdt} are smaller than $-1/2-\alpha/2 +\epsilon$, leading to
\begin{equation}
    \begin{aligned}\label{eq:dEp}
    \dfrac{\mathrm{d}\mathbb{E}|p_t^1|^2}{\mathrm{d}t} \leq &3\| P \|_2\mathrm{Var}(\pi(t))\mathbb{E}|p_t^1|^2 + CB^{-1/2-\alpha/2+\epsilon}.
    \end{aligned}
    \end{equation}
Now, by equation (2.2) in \cite{Carrillo_2021} we have
\begin{equation}
    \begin{aligned}
    \mathrm{Cov}_{\pi(t)} = (1-e^{-2t})P^{-1} + e^{-2t}\mathrm{Cov}_{\pi_0}^{-1}
    \end{aligned}
\end{equation}
and hence $\mathrm{Var}(\pi(t)) \leq M$ for some $M>0$. Integrating \eqref{eq:dEp} leads to
\begin{equation}
    \begin{aligned}
        \mathbb{E}|p_t^1|^2 \leq C\int_0^t \mathbb{E}|p_s^1|^2 ds + CtB^{-1/2-\alpha/2+\epsilon}
    \end{aligned}
\end{equation}
and applying lemma \ref{lem:gronwall_standard} yields
\begin{equation}
    \begin{aligned}
     \mathbb{E}|p_t^1|^2 \leq CtB^{-1/2-\alpha/2+\epsilon} + \int_0^t CsB^{-1/2-\alpha/2+\epsilon}e^{C(t-s)}\mathrm{d}s = C(t) B^{-1/2-\alpha/2+\epsilon},
    \end{aligned}
\end{equation}
finishing the proof.

\end{proof}

\begin{proof}[Proof of Lemma \ref{lem:5.5}]
The proof is the same as that of Lemma 5.5 in \cite{DingLi21}.
\end{proof}

\newpage

\section{Gronwall inequalities}

\begin{lemma}[Gronwall \cite{gronwall1919note}]\label{lem:gronwall_standard}
Let $u(t)$ be a nonnegative function satisfying
\begin{equation*}
    u(t) \leq \beta(t) + \int_0^t c(t)u(t) \mathrm{d}t \qquad \mathrm{for~all} \quad t\geq 0,
\end{equation*}
where $\beta$ and $c$ are continuous nonnegative functions for $t\geq 0$. Then
\begin{equation*}
    u(t) \leq \beta(t)+\int_0^t \beta(s)c(s)e^{(\int_s^t c(r)\mathrm{d}r)}\mathrm{d}s \quad \mathrm{for~ all} \quad t \geq 0. 
\end{equation*}
\end{lemma}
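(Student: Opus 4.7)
The plan is the classical integrating factor argument, which gives the sharp constant in the conclusion. First I would define the auxiliary function $v(t) := \int_0^t c(s)u(s)\,\mathrm{d}s$, so that the hypothesis reads $u(t) \leq \beta(t) + v(t)$. Multiplying by the nonnegative weight $c(t)$ and using $v'(t) = c(t)u(t)$ (by the fundamental theorem of calculus and continuity of $c,u$) yields the linear differential inequality
\[
  v'(t) \;\leq\; c(t)\beta(t) + c(t)v(t), \qquad v(0)=0.
\]

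Next I would introduce the integrating factor $\mu(t) := \exp\!\bigl(-\int_0^t c(r)\,\mathrm{d}r\bigr)$, so that $\mu'(t) = -c(t)\mu(t)$ and consequently
\[
  \frac{\mathrm{d}}{\mathrm{d}t}\bigl(v(t)\mu(t)\bigr) \;=\; \mu(t)\bigl(v'(t) - c(t)v(t)\bigr) \;\leq\; c(t)\beta(t)\mu(t).
\]
Since $v(0)\mu(0) = 0$, integrating this inequality from $0$ to $t$ and dividing by the strictly positive $\mu(t)$ produces
\[
  v(t) \;\leq\; \int_0^t c(s)\beta(s)\,\frac{\mu(s)}{\mu(t)}\,\mathrm{d}s \;=\; \int_0^t c(s)\beta(s)\exp\!\Bigl(\int_s^t c(r)\,\mathrm{d}r\Bigr)\mathrm{d}s.
\]
Substituting this back into $u(t) \leq \beta(t) + v(t)$ gives the stated bound.

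There is no genuine obstacle here: the only points requiring mild care are (i) that $v$ is continuously differentiable, which follows immediately from continuity of $c$ and $u$, and (ii) that multiplying a scalar inequality by the positive factor $\mu(t)$ preserves its direction. No iteration, regularization, or Picard-type argument is needed, and nonnegativity of $\beta$ is used only implicitly through the monotonicity of the integral representation of $v$. If $\beta$ were merely integrable rather than continuous, the same proof would still go through after replacing pointwise differentiation with an absolutely continuous version of the chain rule.
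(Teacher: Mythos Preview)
Your integrating-factor argument is correct and is the standard proof of the integral form of Gronwall's inequality. The paper itself does not give a proof but simply cites a reference (``See e.g.\ Theorem 1 in \cite{dragomir2002some}''), so your proposal in fact supplies more than the paper does; the argument you sketch is precisely the classical one found in such references. Two minor remarks: the key sign-preservation step uses nonnegativity of $c$ (when multiplying $u\le\beta+v$ by $c(t)$), not of $\beta$ as you suggest; and you tacitly assume continuity of $u$ to differentiate $v$, which the statement does not explicitly grant, though this is harmless in the paper's applications.
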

\begin{proof}
See e.g. Theorem 1 in \cite{dragomir2002some}.
\end{proof}

\begin{lemma}[Perov, \cite{perov1959k} or Theorem 21 in \cite{dragomir2002some}]\label{lem:gronwall_bernoulli}
Let $u(t)$ be a nonnegative function satisfying 
\begin{equation*}
    u'(t) \leq c(t)u(t) + a(t)u(t)^{\alpha},
\end{equation*}
where $0\leq \alpha < 1$ and $c$ and $a$ are countinous nonnegative functions for $t>0$. Then we have
\begin{small}
  \begin{equation*}
     u(t) \leq \left[u^{1-\alpha}(0)e^{(1-\alpha)\int_0^t c(s)ds} + e^{(1-\alpha)\int_0^t c(s)ds}\int_0^t (1-\alpha)a(s) e^{(\alpha-1)\int_0^s a(r) dr} ds\right]^{\frac{1}{1-\alpha}}.
 \end{equation*} 
 \end{small}

\end{lemma}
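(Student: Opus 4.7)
The plan is to reduce this Bernoulli-type differential inequality to a linear one via the standard substitution, then invoke the linear Gronwall inequality already stated as Lemma~\ref{lem:gronwall_standard}. Since $0\leq\alpha<1$, the function $v(t):=u(t)^{1-\alpha}$ is well defined and nonnegative, and at points where $u(t)>0$ we have
$$
v'(t) = (1-\alpha)\,u(t)^{-\alpha}\,u'(t) \leq (1-\alpha)\,u(t)^{-\alpha}\bigl(c(t)u(t)+a(t)u(t)^{\alpha}\bigr) = (1-\alpha)c(t)v(t) + (1-\alpha)a(t).
$$
So the Bernoulli inequality for $u$ translates into a linear differential inequality for $v$ with coefficients $(1-\alpha)c(t)$ and inhomogeneity $(1-\alpha)a(t)$.

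Next I would integrate this inequality from $0$ to $t$ to obtain the integral form
$$
v(t) \leq v(0) + \int_0^t (1-\alpha)a(s)\,ds + \int_0^t (1-\alpha)c(s)\,v(s)\,ds,
$$
so that Lemma~\ref{lem:gronwall_standard} applies with $\beta(t):=v(0)+\int_0^t(1-\alpha)a(s)\,ds$ and kernel $(1-\alpha)c(t)$. Plugging this $\beta$ into the conclusion of Lemma~\ref{lem:gronwall_standard} and simplifying via integration by parts (using $\beta'(s)=(1-\alpha)a(s)$) gives a closed-form bound on $v(t)$ of the type
$$
v(t) \leq v(0)\,e^{(1-\alpha)\int_0^t c(s)\,ds} + e^{(1-\alpha)\int_0^t c(s)\,ds}\int_0^t (1-\alpha)a(s)\, e^{-(1-\alpha)\int_0^s c(r)\,dr}\,ds.
$$
(I note in passing that the statement as printed has $a(r)$ inside the inner exponent, which appears to be a typo for $c(r)$; the computation above produces the correct form.) Raising both sides to the power $1/(1-\alpha)$, which is order-preserving on $[0,\infty)$, yields the claimed bound on $u(t)=v(t)^{1/(1-\alpha)}$.

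The main obstacle is handling the set where $u$ vanishes, since the substitution $v=u^{1-\alpha}$ has a non-differentiable point at $u=0$ (when $0<\alpha<1$). To make the computation rigorous I would work with $v_{\varepsilon}(t):=(u(t)+\varepsilon)^{1-\alpha}$ for $\varepsilon>0$, derive the corresponding linear inequality (using $u\leq u+\varepsilon$ and $(u+\varepsilon)^{-\alpha}u^{\alpha}\leq 1$ to absorb the $a$-term), apply the linear Gronwall bound, and then pass to the limit $\varepsilon\downarrow 0$; continuity of the right-hand side in $\varepsilon$ restores the exact formula. The case $\alpha=0$ is immediate: the inequality is already linear and reduces directly to Lemma~\ref{lem:gronwall_standard} with $\beta(t)=u(0)+\int_0^t a(s)\,ds$.
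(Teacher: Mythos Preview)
Your proof is correct and uses essentially the same key idea as the paper---the Bernoulli substitution $v=u^{1-\alpha}$---but packages it differently. The paper introduces the \emph{comparison} function $v$ as the solution of the \emph{equality} $v'=c(t)v+a(t)v^\alpha$ with $v(0)=u(0)$, solves that Bernoulli ODE explicitly by variation of parameters, and then invokes a comparison principle to conclude $u(t)\le v(t)$. You instead substitute directly into the differential \emph{inequality} and reduce to the linear Gronwall Lemma~\ref{lem:gronwall_standard}. Your route is slightly more self-contained (it uses only the lemma already on the page rather than an unstated ODE comparison theorem), and your $\varepsilon$-regularization handles the non-Lipschitz point $u=0$ more explicitly than the paper does. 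Your observation about the typo is correct: the paper's own proof produces $e^{(1-\alpha)\int_s^t c(r)\,dr}$, so the inner exponent in the displayed bound should contain $c(r)$, not $a(r)$.
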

\begin{proof}
Let $v(t)$ solve 
\begin{equation*}
    v'(t) = c(t)v(t) + a(t)v(t)^{\alpha}, \quad v(0) = u(0).
\end{equation*}
This is a classic Bernoulli-type ODE, which can be solved by noting that the ODE becomes linear in $v^{(1-\alpha)}$:
\begin{equation*}
    (v^{1-\alpha})'(t) = (1-\alpha)c(t)v^{1-\alpha}(t) + (1-\alpha)a(t).
\end{equation*}
 Applying the variation of parameters to $v^{1-\alpha}$ we get
 \begin{small}
  \begin{equation*}
     v(t) = \left[u^{1-\alpha}(0)e^{(1-\alpha)\int_0^t c(s)ds} + \int_0^t (1-\alpha)a(s) e^{(1-\alpha)\int_s^t c(r) dr} ds\right]^{\frac{1}{1-\alpha}}.
 \end{equation*} 
 \end{small}
 The assertion now immediately follows, since $u(t) \leq v(t)$ by construction.
\end{proof}

\begin{lemma}\label{lem:gronwall_general}
Let $u(t)$ be a nonnegative function satisfying
\begin{equation*}
    u'(t) \leq c(t) u(t)  + a(t) u^{\alpha}(t) + b(t) u^{\beta}(t),
\end{equation*}
where $0<\alpha<\beta\leq 1$ and $c, a$ and $b$ are continuous and nonnegative functions for $t\geq 0$. Then $u(t)$ is bounded for all $t\geq 0$ by
\begin{small}
\begin{equation}
    u(t) \leq \left[u^{1-\alpha}(0)e^{(1-\alpha)\int_0^t c(s)+b(s)ds} + \int_0^t (1-\alpha)(a(s)+b(s)) e^{(1-\alpha)\int_s^t c(r)+b(r) dr} ds\right]^{\frac{1}{1-\alpha}}.
\end{equation}
\end{small}
\end{lemma}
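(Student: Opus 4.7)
The plan is to reduce the three-term differential inequality to the two-term Bernoulli-type form already handled by Lemma \ref{lem:gronwall_bernoulli}. The key observation is that for any nonnegative real $u \geq 0$ and exponents $0 < \alpha < \beta \leq 1$ we have the pointwise bound
\begin{equation*}
    u^{\beta} \leq u^{\alpha} + u.
\end{equation*}
Indeed, if $u \leq 1$ then $u^{\beta} \leq u^{\alpha}$ because $\beta > \alpha$, while if $u \geq 1$ then $u^{\beta} \leq u$ because $\beta \leq 1$; in either case the claimed bound holds.

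Applying this pointwise to the assumed differential inequality gives
\begin{equation*}
    u'(t) \leq c(t) u(t) + a(t) u^{\alpha}(t) + b(t)\bigl( u^{\alpha}(t) + u(t)\bigr) = \tilde{c}(t)\, u(t) + \tilde{a}(t)\, u^{\alpha}(t),
\end{equation*}
with the continuous nonnegative coefficients $\tilde{c}(t) = c(t) + b(t)$ and $\tilde{a}(t) = a(t) + b(t)$. This is precisely the hypothesis of Lemma \ref{lem:gronwall_bernoulli} (with exponent $\alpha$), so applying that lemma directly yields
\begin{equation*}
    u(t) \leq \left[ u^{1-\alpha}(0)\, e^{(1-\alpha)\int_0^t \tilde{c}(s)\,\mathrm{d}s} + \int_0^t (1-\alpha)\,\tilde{a}(s)\, e^{(1-\alpha)\int_s^t \tilde{c}(r)\,\mathrm{d}r}\,\mathrm{d}s \right]^{\frac{1}{1-\alpha}},
\end{equation*}
which, after substituting back $\tilde{c} = c+b$ and $\tilde{a} = a+b$, is exactly the claimed bound.

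There is no real obstacle here beyond verifying the pointwise comparison $u^{\beta} \leq u^{\alpha} + u$ in the two cases $u \leq 1$ and $u \geq 1$; once that step is in place the result is an immediate consequence of the already-established Bernoulli-type Gronwall lemma. One may note as a minor sanity check that the estimate degenerates correctly when $u(0)=0$ (the first term in the bracket vanishes and the integral term still provides a bound on $u(t)$), and that no regularity beyond the standing continuity assumptions on $a, b, c$ is needed to invoke Lemma \ref{lem:gronwall_bernoulli}.
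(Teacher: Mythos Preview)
Your proof is correct and follows essentially the same approach as the paper: both reduce to Lemma~\ref{lem:gronwall_bernoulli} by absorbing the $u^\beta$ term via the case split $u\le 1$ versus $u\ge 1$, arriving at the identical inequality $u'(t)\le (c+b)u + (a+b)u^\alpha$. Your version simply packages the two cases into the single pointwise bound $u^\beta \le u^\alpha + u$, which is a slightly cleaner presentation of the same argument.
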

\begin{proof}
For $u(t) < 1$ we have $u^{\beta}(t) \leq u^{\alpha}(t)$ and hence
    \begin{equation*}
    \begin{aligned}
        u'(t) &\leq c(t) u(t)  + (a(t)+b(t)) u^{\alpha}(t) \\
        &\leq (c(t)+b(t)) u(t)  + (a(t)+b(t)) u^{\alpha}(t).
    \end{aligned}
    \end{equation*}
    For $u(t) \geq 1$, we have $u^{\beta}(t) \leq u(t)$ and hence
        \begin{equation*}
        \begin{aligned}
            u'(t) &\leq (c(t)+b(t)) u(t)  + a(t) u^{\alpha}(t)\\
        &\leq (c(t)+b(t)) u(t)  + (a(t)+b(t)) u^{\alpha}(t).
        \end{aligned}
    \end{equation*}
    In total, we arrive at $u'(t) \leq (c(t)+b(t)) u(t)  + (a(t)+b(t)) u^{\alpha}(t)$ for all $t\geq 0$ and can use lemma \ref{lem:gronwall_bernoulli}.
\end{proof}

\newpage
\section{Existence and Uniqueness of a solution of ALDI}
    
Existence and uniqueness of a strong solution by means of construction of a Lyapunov function of the system has already been proven in the original work on ALDI \cite{garbuno2020affine}. We provide an alternative proof, using the same Lyapunov function that is used in \cite{DingLi21} for existence and uniqueness of the EKS solution, merely for the interest of the reader.

\begin{theorem}[Compare Thm 3.2 in \cite{DingLi21}]\label{thm:aldi_existence}
    Suppose $G$ is linear and $(y^i_0)_{i=1}^B$ are i.i.d. Then for all $t\geq 0$ there exists a unique strong solution $(y^i_t)_{i=1}^B$ (up to $\mathbb{P}$-indistinguishability) of the set of coupled SDEs defined by \eqref{eq:aldi}. 
\end{theorem}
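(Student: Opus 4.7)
The plan is to follow a standard truncation-plus-Lyapunov argument for SDEs whose coefficients are locally but not globally Lipschitz. First I would rewrite \eqref{eq:lin_aldi} in block form on $\mathbb{R}^{DB}$ as
\begin{equation*}
\mathrm{d}Y_t = F(Y_t)\mathrm{d}t + G(Y_t)\mathrm{d}W_t,
\end{equation*}
with $F(Y) = -\mathbf{C}(Y)\mathbf{P}(Y - \mathbf{y}^\ast) + \tfrac{D+1}{B}(Y - \overline{Y})$ and $G(Y)$ built from the generalised non-symmetric square root \eqref{eq:nonsym_sqrt}. Observe that $\mathrm{Cov}_Y$ is a polynomial of degree two in the components of $Y$, so $F$ is a polynomial vector field (cubic in $Y$) and hence locally Lipschitz, while $G$ is \emph{linear} in $Y$ thanks to the choice \eqref{eq:nonsym_sqrt} and thus globally Lipschitz. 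By classical SDE theory (e.g.\ Theorem 5.2.1 in \cite{oksendal2003stochastic} applied on the truncated domain $\{|Y|\le n\}$ combined with the standard localisation argument), one obtains a pathwise unique strong solution up to the explosion time $\tau_\infty = \lim_{n\to\infty}\tau_n$, where $\tau_n = \inf\{t\ge 0 : |Y_t|\ge n\}$.

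The core of the proof is showing $\mathbb{P}(\tau_\infty < \infty) = 0$, for which I would use a Lyapunov function. Two natural candidates both work: the block quantity $V(Y)=\sum_i|y^i|^2$, or, to leverage the machinery that is already developed, the scaled moment $V_p(\mathbf{y}) = \tfrac{1}{B}\sum_i\langle \sqrt{P}y^i,\sqrt{P}y^i\rangle^p$ for $p\ge 1$. Applying Itô's formula to $V_p(Y_{t\wedge \tau_n})$ for the truncated process produces exactly the identity \eqref{eq:dKp} from the proof of Lemma \ref{prop:4.1}, with the drift contribution from the Kalman-type term dissipative (non-positive quadratic form in $\mathrm{Cov}_{\mathbf{y}}$), the diffusion trace term of order $\mathrm{Tr}(\mathrm{Cov}_{y_t})$, and the ALDI correction term bounded by $C(p,D)g_p(t)$ as already shown in the appendix. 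After taking expectations and using Young's inequality as in that proof, one obtains
\begin{equation*}
 g_p^{(n)}(t) \;\le\; g_p(0) + C\int_0^t g_p^{(n)}(s)\,\mathrm{d}s + C\int_0^t e^{Cs}\bigl(g_p^{(n)}(s)\bigr)^{(p-1/2)/p}\,\mathrm{d}s,
\end{equation*}
where $g_p^{(n)}(t) := \mathbb{E}V_p(Y_{t\wedge\tau_n})$. Applying the generalised Gronwall inequality of Lemma \ref{lem:gronwall_general} yields a bound on $g_p^{(n)}(t)$ uniform in $n$.

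Uniform-in-$n$ moment bounds on $|Y_{t\wedge\tau_n}|$ then give $\mathbb{P}(\tau_n \le T) \le \mathbb{E}|Y_{T\wedge\tau_n}|^2/n^2 \to 0$ as $n\to\infty$ for every $T$, hence $\tau_\infty = \infty$ almost surely, which promotes the local strong solution to a global one. Uniqueness up to $\mathbb{P}$-indistinguishability follows from the local-Lipschitz uniqueness on each $\{|Y|\le n\}$ together with $\tau_n\uparrow\infty$.

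The main obstacle is the cubic growth of $F$: the usual linear-growth criterion fails, so one cannot appeal to a black-box existence theorem. The redeeming structure is that the cubic term $-\mathrm{Cov}_Y \mathbf{P}Y$ is \emph{dissipative} in the scaled coordinates used for $V_p$ (this is the content of the first line of \eqref{eq:dKp}), and the diffusion term is only linear in $Y$; together these ensure that the Itô-Lyapunov estimate closes into the Bernoulli-type ODE handled by Lemma \ref{lem:gronwall_general} rather than producing finite-time blow-up. The correction term $\tfrac{D+1}{B}(y^i - \overline{y})$ contributes at most linearly and is absorbed into $C(p,D)g_p$ exactly as in the proof of Proposition \ref{prop:4.1}, so the additional ALDI piece does not create any fundamental new difficulty beyond the EKS analysis of \cite{DingLi21}.
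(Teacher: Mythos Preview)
Your approach is correct and follows the same overall Lyapunov/non-explosion architecture as the paper, but with a different Lyapunov function. The paper chooses
\[
V(Y)=\frac{1}{B}\sum_{i=1}^B|y^i-\overline{y}|^2+|\overline{y}-y^\ast|_P^2,
\]
which is tailored so that the diffusion trace term equals $2(1-\tfrac{1}{B})V_1$ and the Kalman drift and ALDI correction combine to give the clean linear inequality $\mathcal{L}V\le cV$; coercivity and non-explosion then follow in one step via the standard (linear) Gronwall argument. You instead recycle the scaled moment functional $K_p$ from the proof of Lemma~\ref{prop:4.1}, which forces a two-stage estimate (first a localized version of Lemma~\ref{lem:4.1} for $h_p^{(n)}$, then $g_p^{(n)}$) and the Bernoulli-type Gronwall of Lemma~\ref{lem:gronwall_general}. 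Your route reuses machinery already in the appendix, whereas the paper's route is shorter and yields $\mathcal{L}V\le cV$ directly. One small imprecision: the first term in \eqref{eq:dKp} is not purely dissipative because of the $\y^\ast$ cross-term; that cross-term is exactly where the factor $e^{Cs}$ in your integral inequality originates (via the $h_p$-bound), so you should state explicitly that the localized Lemma~\ref{lem:4.1} must be proved for the stopped process before the displayed inequality for $g_p^{(n)}$ is available. Your remark that the generalized square root \eqref{eq:nonsym_sqrt} makes $G$ linear, and hence globally Lipschitz, is a useful observation that the paper leaves implicit.
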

    
\begin{proof}{Proof of Theorem \ref{thm:aldi_existence}}
We consider the stacked SDE
\begin{equation*}
    \mathrm{d}Y_t = F(Y_t)\mathrm{d}t + G(Y_t)\mathrm{d}W_t,
\end{equation*}
where $Y_t = (y_t^i)_{i=1}^B \in \mathbb{R}^{DB}$, $W_t = \left( W_t^i \right)_{i=1}^B$ and
\begin{equation*}
\begin{aligned}
    F(Y_t) &= \left( -\text{Cov}_{y_t}P(y_t^i-u^*) + \dfrac{D+1}{B}(y_t^j-\overline{y}_t)\right)_{i=1}^B \in \mathbb{R}^{DB}, \\
    G(Y_t) &= \text{diag}\left( \sqrt{2\text{Cov}_{y_t}} \right)_{i=1}^B \in \mathbb{R}^{DB\times DB}, \\
\end{aligned}
\end{equation*}
where $\text{diag}(D_i)_{i=1}^B$ is a block diagonal matrix with entries $(D_i)_i^B$ on the diagonal.

    We prove the assertion by showing existence of a Lyapunov function of the system, i.e. a function $V\in\mathcal{C}^2(\mathbb{R}^{DB},\mathbb{R})$ such that 
\begin{itemize}
    \item[(1)] there exists a $c > 0$ so that
    \begin{equation*}
        \mathcal{L}V(Y) \coloneqq \nabla V(Y)\cdot F(Y) + \frac{1}{2}\text{tr}\left[ G(Y)^{\intercal} \text{Hess}_{V}(Y) G(Y) \right] \leq c V(Y)
    \end{equation*}
    for all $Y\in\mathbb{R}^{DB}$.
    \item[(2)] we have $$\inf_{|Y| > R} V(Y) \longrightarrow \infty $$
    as $R \longrightarrow \infty$.
\end{itemize}
The function we choose is the same that is used in \cite{DingLi21} for the EKS:
\begin{equation*}
    V(Y) = V_1(Y) + V_2(Y) = \dfrac{1}{B}\sum_{i=1}^B |y^j-\overline{y}|^2 + |\overline{y}-y^*|_P^2.
\end{equation*}
Towards (1), note that
    \begin{equation*}
    \begin{aligned}
        \nabla V_1(Y) &= \left( \frac{2}{B}(y^j - \overline{y}) \right)_{i=1}^B, \\
        \nabla V_2(Y) &= \left( 2P(\overline{y}-y^*) \right)_{i=1}^B
    \end{aligned}
    \end{equation*}
    and that the diagonal blocks of $\text{Hess}_{V_2}(Y)$ and $\text{Hess}_{V_1}(Y)$ are given by $$\frac{2}{B}\text{diag}(P)_{i=1}^B$$ 
    and 
    $$\frac{2}{B}\left( 1-\frac{1}{B} \right)\text{diag}\left(  I_d\right)_{i=1}^B$$ respectively.

    This yields
    \begin{equation*}
    \begin{aligned}
        \nabla V_1(Y)\cdot F(Y) &= -\frac{2}{B}\sum_{i=1}^B \left\langle y^i - \overline{y}, \text{Cov}_y B(y^i-y^*)\right\rangle + \frac{2(D+1)}{B^2}\sum_{i=1}^B \left\langle y^i - \overline{y}, y^i - \overline{y}\right\rangle \\
        &= -\frac{2}{B}\sum_{i=1}^B \left\langle y^i - \overline{y}, \text{Cov}_y B(y^i-\overline{y})\right\rangle + \frac{2(D+1)}{B}V_1(Y) \leq  \frac{2(D+1)}{B}V_1(Y),
    \end{aligned}
    \end{equation*}
    where we have used that both $\text{Cov}_y$ and $B$ are positive semi-definite, as well as
    \begin{equation*}
    \begin{aligned}
        \nabla V_2(Y)\cdot F(Y) &= -2\sum_{i=1}^B \left\langle P(\overline{y}-y^*), \text{Cov}_y P(y^j-y^*)\right\rangle + \frac{2(D+1)}{B}\sum_{i=1}^B \left\langle P(\overline{y} - y^*), y^i - \overline{y}\right\rangle,  \\
        &= -2B \left\langle P(\overline{y}-y^*), \text{Cov}_y P(\overline{y}-y^*)\right\rangle + 2(D+1) \left\langle P(\overline{y} - y^*), \overline{y} - \overline{y}\right\rangle \leq 0.
    \end{aligned}
    \end{equation*}
    Furthermore, we have
    \begin{equation*}
    \begin{aligned}
        \frac{1}{2}\text{Tr}\left[G(Y)^{\intercal}\text{Hess}_{V_1}(Y)G(Y)\right] &= \frac{1}{2}\text{Tr}\left[\text{Hess}_{V_1}(Y)G(Y)G(Y)^{\intercal}\right] = \text{Tr}\left[\text{Hess}_{V_1}(Y)\text{diag}(\text{Cov}_y)_{i=1}^B\right] \\
        &= \frac{2}{B}\sum_{i=1}^B \left( 1-\frac{1}{B} \right) \text{Tr}\left[\text{Cov}_y\right] = 2 \left( 1-\frac{1}{B} \right) \text{Tr}\left[\text{Cov}_y\right] \\
        &= 2 \left( 1-\frac{1}{B} \right)\dfrac{1}{B} \sum_{i=1}^B (y^i - \overline{y})^{\intercal} (y^i - \overline{y}) = 2\left( 1-\dfrac{1}{B} \right) V_1(Y)
    \end{aligned}
    \end{equation*}
  and
  \begin{equation*}
  \begin{aligned}
      \frac{1}{2}\text{Tr}\left[G(Y)^{\intercal}\text{Hess}_{V_2}(Y)G(Y)\right] &= \text{Tr}\left[\text{Hess}_{V_2}(Y)\text{diag}(\text{Cov}_y)_{i=1}^B\right] 
      = 2 \text{Tr}[P\text{Cov}_y] \\
      &= \dfrac{2}{B}\sum_{i=1}^B (y^i-\overline{y})^{\intercal} P (y^i-\overline{y}) \leq 2 \|P\| V_1(Y).
  \end{aligned}
  \end{equation*}
In total, we get
\begin{equation*}
    \mathcal{L}V(Y) \leq \left( \dfrac{2(D+1)}{B} + 2\left(1-\dfrac{1}{B}\right) + 2\| P \| \right)V(Y)
\end{equation*}
for all $Y$, proving (1). Now, towards (2) assume there is a sequence $(Y_n)_{n\in\mathbb{N}}$ in $\mathbb{R}^{DB}$ with  $V(Y_n)<M$ for some $M>0$ and all $n$. By the construction of $V$, we have
\begin{equation*}
    |y_n^i-\overline{y}_n| < \sqrt{MB}, \quad |\overline{y}_n - y^*| < \sqrt{M}
\end{equation*}
and hence
\begin{equation*}
    |Y_n|^2 = \sum_{i=1}^B |y_n^i|^2 = \sum_{i=1}^B |y_n^i - \overline{y}_n + \overline{y}_n - y^* + y^*|^2 < \sum_{i=1}^B \left( \sqrt{M}(\sqrt{B}+1) + |y^*| \right)^2
\end{equation*}
for all $n$, meaning the sequence $(|Y_n|)_{n\in\mathbb{N}}$ is also bounded. 
\end{proof}

\newpage
\section{Wasserstein spaces and convergence}
For $p\geq 1$ let $\mathcal{D}_p(X)$ denote the set of probability measures over a Banach space $X$ with norm $\|.\|$  that have finite $p$-th moment. On $\mathcal{D}_p(X)$ we define the $p$-Wasserstein metric as
$$
 \mathcal{W}_p(\mu,\nu)^p = \inf\limits_{\pi\in \Pi_p(\mu,\nu)} \int\limits_{X\times X}\| x-y\|^p\mathrm{d}\pi(x,y), 
$$
with $\Pi_p(\mu,\nu)=\{\pi\in\mathcal{D}_p(X\times X)\,|\, \pi(\cdot\times X)=\mu, \pi(X\times \cdot)=\nu\}$.
Furthermore for $X=\mathbb{R}^D$, we denote with $F_p^k(X)$ with $k\in\mathbb{N}\cup\{\infty\}$ the set of regular measures with $p$-th moment and a Lebesque density in $\mathcal{C}^k(X)$, in particular
$$
F_p^k=\{\mu\in \mathcal{D}_p(\mathbb{R}^D) | \exists f:=\mathrm{d}\mu/\mathrm{d}x \text{ and } f \in\mathcal{C}^k(\mathbb{R}^D)\}.
$$

\begin{lemma} \label{lemma:dense_F_p_k_W_p}
Let $p\geq 1$. Then, for any $k\in\mathbb{N}\cup\{\infty\}$ the subset $F_p^k\subset\mathcal{D}_p$ is dense in $\mathcal{D}_p$ with respect to the $\mathcal{W}_p$ metric.
\end{lemma}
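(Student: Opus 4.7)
The plan is to prove density by an explicit mollification construction. Given any $\mu\in\mathcal{D}_p(\mathbb{R}^D)$, I will exhibit a sequence $\mu_\epsilon\in F_p^k$ with $\mathcal{W}_p(\mu,\mu_\epsilon)\to 0$ as $\epsilon\to 0$.

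First I would fix a standard mollifier: choose $\phi\in\mathcal{C}^\infty(\mathbb{R}^D)$ nonnegative with $\int\phi\,\mathrm{d}x=1$ and compact support (so in particular $\int\|z\|^p\phi(z)\,\mathrm{d}z<\infty$), and set $\phi_\epsilon(z):=\epsilon^{-D}\phi(z/\epsilon)$. Define the mollified measure via convolution, $\mu_\epsilon:=\mu*\phi_\epsilon$, whose Lebesgue density is
\begin{equation*}
    f_\epsilon(x) = \int_{\mathbb{R}^D}\phi_\epsilon(x-y)\,\mathrm{d}\mu(y).
\end{equation*}
Differentiation under the integral (justified by compact support and smoothness of $\phi_\epsilon$) shows $f_\epsilon\in\mathcal{C}^k(\mathbb{R}^D)$ for any $k\in\mathbb{N}\cup\{\infty\}$, so the smoothness part of membership in $F_p^k$ is automatic.

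Next I would verify the $p$-th moment bound and the Wasserstein convergence simultaneously by introducing a coupling. Let $X\sim\mu$ and $Z$ be independent with density $\phi_\epsilon$; then $X+Z\sim\mu_\epsilon$, and the joint law of $(X,X+Z)$ is an admissible transport plan in $\Pi_p(\mu,\mu_\epsilon)$. By Minkowski's inequality, $\mathbb{E}\|X+Z\|^p<\infty$ since both $\mathbb{E}\|X\|^p<\infty$ (by $\mu\in\mathcal{D}_p$) and $\mathbb{E}\|Z\|^p=\epsilon^p\int\|z\|^p\phi(z)\,\mathrm{d}z<\infty$, so $\mu_\epsilon\in\mathcal{D}_p$ and hence $\mu_\epsilon\in F_p^k$. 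The same coupling yields
\begin{equation*}
    \mathcal{W}_p(\mu,\mu_\epsilon)^p \leq \mathbb{E}\|X-(X+Z)\|^p = \mathbb{E}\|Z\|^p = \epsilon^p\!\int_{\mathbb{R}^D}\!\|z\|^p\phi(z)\,\mathrm{d}z,
\end{equation*}
which tends to $0$ as $\epsilon\to 0$.

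There is no real obstacle: the argument is standard, and the only thing to watch is that the mollifier has enough decay for $\mathbb{E}\|Z\|^p$ to be finite and scale like $\epsilon^p$, which is why I pick a compactly supported $\phi$ (a Gaussian would work equally well). Combining the three observations—$\mu_\epsilon$ has a $\mathcal{C}^k$ density, $\mu_\epsilon$ has finite $p$-th moment, and $\mathcal{W}_p(\mu,\mu_\epsilon)\to 0$—establishes the claimed density of $F_p^k$ in $(\mathcal{D}_p,\mathcal{W}_p)$.
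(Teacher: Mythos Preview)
Your proof is correct and essentially identical to the paper's: both mollify $\mu$ with a compactly supported smooth kernel $\phi_\epsilon$, then use the coupling $(X,X+Z)$ (equivalently the transport plan $\pi_\epsilon(\mathrm{d}x,\mathrm{d}y)=\phi_\epsilon(x-y)\,\mathrm{d}\mu(y)\,\mathrm{d}x$) to obtain $\mathcal{W}_p(\mu,\mu_\epsilon)^p\le \epsilon^p\int\|z\|^p\phi(z)\,\mathrm{d}z$. Your version is slightly more complete in that you explicitly check $\mu_\epsilon\in\mathcal{D}_p$, which the paper leaves implicit.
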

\begin{proof}
Consider a non-negative convolution kernel $\rho\in\mathcal{C}_{c}^\infty(\mathbb{R}^D)$ with $\|\rho\|_{1}=1$, where $\|\cdot\|_1$ denotes the $L^1$-norm over $\mathbb{R}^D$. Then define $\rho_\epsilon(x)=\epsilon^{-D}\rho(\epsilon^{-1}x)$ and 
$$
    f_\epsilon(x) = (\rho_\epsilon * \mu)(x)=\int\limits_{\mathbb{R}^D}\rho_\epsilon(x-y)\mathrm{d}\mu(y).
$$
Then $f_\epsilon \in \mathcal{C}^\infty(\mathbb{R}^d)$ is a Lebesque density defining a measure $\mu_\epsilon$ on $\mathbb{R}^D$.
Now consider the transport plan or coupling
$$
\pi_\epsilon(\mathrm{d}x,\mathrm{d}y) = \rho_\epsilon(x-y)\mathrm{d}\mu(y)\mathrm{d}x.
$$
Then $\pi_\epsilon\in \mathcal{W}_2(\mathbb{R}^D\times\mathbb{R}^D)$ with marginal projections $\mu_\epsilon(\cdot\times \mathbb{R}^D)=\mu_\epsilon$ and $\pi_\epsilon(\mathbb{R}^D\times \cdot)=\mu$. Then it holds
\begin{equation*}
\begin{aligned}
    \mathcal{W}_p(\mu,\mu_\epsilon)^p&\leq \int\limits_{\mathbb{R}^D\times\mathbb{R}^D}|x-y|^p\mathrm{d}\pi_\epsilon \\
&= \int\limits_{\mathbb{R}^D}\int\limits_{\mathbb{R}^D} |x-y|^p\rho_\epsilon(x-y)\mathrm{d}\mu(y)\mathrm{d}x \\
&=
\int\limits_{\mathbb{R}^D}\int\limits_{\mathbb{R}^D}\epsilon^p|z|^p\rho(z)\mathrm{d}\mu(y)\mathrm{d}z \\
&= \epsilon^p \int\limits_{\mathbb{R}^D} |z|^p\rho(z)\mathrm{d}z,
\end{aligned}
\end{equation*}
where we used the substitution $z=\epsilon^{-1}(x-y)$. Now, clearly, $\mathcal{W}_p(\mu,\mu_\epsilon) \to 0$ as $\epsilon\to 0$.
\end{proof}

\begin{theorem}[Theorem 1 in \cite{fournier2015rate}]\label{thm:conv_emp_measure}
Let $\mu$ be a probability measure on $\mathbb{R}^{D}$ and let $\mu^{(B)} = \frac{1}{B}\sum_{i=1}^B \delta_{y^i}$ be an empirical measure of i.i.d. samples $\{y^i\}_{i=1}^B$ drawn from $\mu$. Let $p > 0$ and assume that 
\begin{equation}
    M_q(\mu) \coloneqq \int_{\mathbb{R}^{D}} |y|^{q} \mathrm{d}\mu(y) < \infty \qquad \textrm{for some} \quad q > p.
\end{equation}
Then, there exists a constant $C$ depending only on $p, d, q$ such that, for all $N \geq 1$,
\ifnum\classstyle=0 
{\footnotesize
\begin{equation}
\begin{aligned}
      \mathbb{E}[(\mathcal{W}_p(\hat{\mu}^{(B)} , \mu))] \leq 
C M^{p/q}_q (\mu) \cdot \begin{cases}
B^{-1/2} + B^{-(q- p)/q} &\quad \textrm{if} \quad p > d/2 \quad\textrm{and}\quad q = 2p,\\
B^{-1/2} \log(1 + B) + B^{-(q- p)/q} &\quad \textrm{if} \quad p = d/2 \quad\textrm{and}\quad q = 2p,\\
B^{-p/d} + B^{-(q- p)/q} &\quad \textrm{if}\quad p \in (0, d/2) \quad\textrm{and}\quad q = d/(d - p). 
\end{cases}
\end{aligned}
\end{equation}
}
\fi
\ifnum\classstyle=1 
\begin{equation}
\begin{aligned}
      \mathbb{E}[(\mathcal{W}_p(\mu^{(B)} , \mu))] \leq 
C M^{p/q}_q (\mu) \cdot \begin{cases}
B^{-1/2} + B^{-(q- p)/q} &\quad \textrm{if} \quad p > d/2 \quad\textrm{and}\quad q = 2p,\\
B^{-1/2} \log(1 + B) + B^{-(q- p)/q} &\quad \textrm{if} \quad p = d/2 \quad\textrm{and}\quad q = 2p,\\
B^{-p/d} + B^{-(q- p)/q} &\quad \textrm{if}\quad p \in (0, d/2) \quad\textrm{and}\quad q = d/(d - p). 
\end{cases}
\end{aligned}
\end{equation}
\fi
\ifnum\classstyle=2 
{\footnotesize
\begin{equation}
\begin{aligned}
      \mathbb{E}[(\mathcal{W}_p(\mu^{(B)} , \mu))] \leq 
C M^{p/q}_q (\mu) \cdot \begin{cases}
B^{-1/2} + B^{-(q- p)/q} &\quad \textrm{if} \quad p > d/2 \quad\textrm{and}\quad q = 2p,\\
B^{-1/2} \log(1 + B) + B^{-(q- p)/q} &\quad \textrm{if} \quad p = d/2 \quad\textrm{and}\quad q = 2p,\\
B^{-p/d} + B^{-(q- p)/q} &\quad \textrm{if}\quad p \in (0, d/2) \quad\textrm{and}\quad q = d/(d - p). 
\end{cases}
\end{aligned}
\end{equation}
}
\fi

\end{theorem}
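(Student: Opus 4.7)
Since this is stated as Theorem~1 of Fournier--Guillin, I would adapt their dyadic multiresolution strategy, which refines earlier arguments of Dereich--Scheutzow--Schottstedt and Boissard--Le~Gouic. The overall plan is to split the ambient space into a compact ``core'' and a tail, control the core by a multiscale argument on dyadic cubes, control the tail via the moment hypothesis, and then balance the two truncations against each other.

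First, I would introduce a radius $R>0$ and split $\mathbb{R}^D$ into $B_R = \{|y|\le R\}$ and $B_R^c$. By Markov's inequality and the finiteness of $M_q(\mu)$, the mass that $\mu$ and $\mu^{(B)}$ place outside $B_R$ has expectation controlled by $M_q(\mu) R^{-q}$, so the tail contribution to $\mathbb{E}[\mathcal{W}_p^p(\mu^{(B)},\mu)]$ is at most $C M_q(\mu) R^{-(q-p)}$ after pairing any leftover mass using diameter-$|y|$ transport costs.

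Second, on $B_R$ I would build a nested family of dyadic partitions $\mathcal{P}_n$ with cube side length $\sim R\cdot 2^{-n}$, so that $|\mathcal{P}_n|\lesssim 2^{nD}$. The key structural lemma is that for any two probability measures $\mu,\nu$ supported on $B_R$,
\begin{equation*}
\mathcal{W}_p^p(\mu,\nu) \;\lesssim\; \sum_{n\ge 0} R^p\, 2^{-np} \sum_{A\in\mathcal{P}_n} |\mu(A)-\nu(A)|.
\end{equation*}
Applied to $(\mu|_{B_R},\mu^{(B)}|_{B_R})$ and taking expectations, the binomial variance bound gives
\begin{equation*}
\mathbb{E}|\mu(A)-\mu^{(B)}(A)| \;\le\; \sqrt{\mu(A)/B},
\end{equation*}
and Cauchy--Schwarz over $A\in\mathcal{P}_n$ yields $\mathbb{E}\sum_A |\mu(A)-\mu^{(B)}(A)| \lesssim 2^{nD/2}/\sqrt{B}$. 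Truncating the dyadic sum at some level $N$ and using the trivial bound $|\mu(A)-\mu^{(B)}(A)|\le \mu(A)+\mu^{(B)}(A)$ for $n>N$, the core estimate becomes a sum $R^p\sum_{n\le N} 2^{n(D/2-p)}/\sqrt{B}$ whose behavior depends on the sign of $D/2-p$, yielding the three regimes in the statement: convergence of the geometric series when $p>D/2$, a logarithm at the threshold $p=D/2$, and a divergent series requiring $N$ chosen as $\sim \log_2 B/D$ when $p<D/2$.

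Third, I would optimize the free parameters $R$ (and, when $p<D/2$, the truncation level $N$) by equating the tail bound $M_q(\mu)R^{-(q-p)}$ with the core bound; the choice $q=2p$ (or $q=D/(D-p)$ in the third regime) makes the exponents match and produces the stated $M_q(\mu)^{p/q}$ prefactor, and the final step is to take the $p$-th root using Jensen's inequality if the statement refers to $\mathbb{E}[\mathcal{W}_p]$ rather than $\mathbb{E}[\mathcal{W}_p^p]$. The main obstacle is bookkeeping rather than conceptual: carefully handling the boundary case $p=D/2$ (the log factor arises precisely because the geometric series becomes harmonic) and ensuring the constants depend only on $p,D,q$ and not on $B$ or $R$. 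A secondary subtlety is the structural inequality bounding $\mathcal{W}_p^p$ by dyadic mass discrepancies, which requires specifying a concrete transport scheme (send mass within common ancestors) and verifying the telescoping cost bound level by level.
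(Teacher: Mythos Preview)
The paper does not prove this statement at all: it is quoted verbatim as Theorem~1 of Fournier--Guillin \cite{fournier2015rate} and used as a black box (specifically in the proof of Proposition~5.1 from \cite{DingLi21}, to bound term~(II) in the triangle argument). There is therefore no ``paper's own proof'' to compare your proposal against.

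That said, your sketch is a faithful outline of the Fournier--Guillin argument itself: the core/tail split via a truncation radius $R$, the dyadic multiresolution bound $\mathcal{W}_p^p \lesssim \sum_n 2^{-np}\sum_{A\in\mathcal{P}_n}|\mu(A)-\nu(A)|$, the binomial variance plus Cauchy--Schwarz step yielding $2^{nD/2}/\sqrt{B}$, the trichotomy on the sign of $D/2-p$, and the final optimization over $R$ are all exactly the ingredients of the original proof. If anything, the only place to be careful is the tail handling (Fournier--Guillin use a slightly more refined ring decomposition rather than a single ball complement), but this does not affect the final rates.
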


\end{document}